\providecommand{\noopsort}[1]{}
\theoremstyle{definition} 
 \newtheorem{definition}{Definition}[section]
 \newtheorem{remark}[definition]{Remark}
\newtheorem*{notation}{Notations}
\theoremstyle{plain}      
 \newtheorem{proposition}[definition]{Proposition}
 \newtheorem{theorem}[definition]{Theorem}
 \newtheorem{corollary}[definition]{Corollary}
 \newtheorem{lemma}[definition]{Lemma}
\newtheorem{conjecture}{Conjecture}
\newcommand*{\house}[1]{
  \mathord{
    \mathpalette\@house{#1}
  }
}
\newcommand*{\@house}[2]{
  \dimen@=\fontdimen8 %
      \ifx#1\scriptscriptstyle\scriptscriptfont
      \else\ifx#1\scriptstyle\scriptfont
      \else\textfont\fi\fi
      3 %
  \sbox0{%
    $#1%
      \vrule width\dimen@\relax
      \overline{%
        \kern2\dimen@
        \begingroup 
          #2%
        \endgroup
        \kern2\dimen@
      }
      \vrule width\dimen@\relax
      \mathsurround=1.5\dimen@ 
    $
  }
  \ht0=\dimexpr\ht0-\dimen@\relax
  \dp0=\dimexpr\dp0+2\dimen@\relax
  \vbox{
    \kern\dimen@ 
    \copy0 
  }
}
\def\dyg{{\rm dyg}}
\newcommand{\lo}{{\rm Log\,}}
\newcommand{\rb}{\mathbb{R}}
\newcommand{\pb}{\mathbb{P}}
\newcommand{\cb}{\mathbb{C}}
\newcommand{\zb}{\mathbb{Z}}
\newcommand{\qb}{\mathbb{Q}}
\newcommand{\nb}{\mathbb{N}}
\def\11{{\mathbf 1}}
\theoremstyle{remark}
\newtheorem{exampl}[subsubsection]{Example}
\def\bee{\begin{exampl}}
\def\eee{\end{exampl}}
\def\bn{\begin{notation}}
\def\en{\end{notation}}
\def\br{\begin{remark}}
\def\er{\end{remark}}
\def\bp{\begin{prop}}
\def\ep{\end{prop}}
\def\bpr{\begin{proof}}
\def\epr{\end{proof}}
\def\bt{\begin{thm}}
\def\et{\end{thm}}
\def\be{\begin{equation}}
\def\ee{\end{equation}}
\def\bl{\begin{lem}}
\def\el{\end{lem}}
\def\bc{\begin{cor}}
\def\ec{\end{cor}}
\def\bd{\begin{defn}}
\def\ed{\end{defn}}
\def\dyg{{\rm dyg}}
\numberwithin{equation}{section}
\begin{document}
\title[A non-trivial minoration for the set of Salem numbers]{A non-trivial minoration for the set of Salem numbers}
\author{Jean-Louis Verger-Gaugry}
\thanks{}
\address{
Univ. Grenoble Alpes, Univ. Savoie Mont~Blanc,
LAMA, CNRS UMR 5127,
F - \!73000 Chamb\'ery, \!France}
\email{verger-gaugry.jean-louis@unsechat-math.fr}

\begin{abstract}
The set of Salem numbers is proved to be bounded
from below by $\theta_{31}^{-1}
= 1.08544\ldots$ where
$\theta_{n}$,
$ n \geq 2$, is the unique root in $(0,1)$ of the
trinomial $-1+x+x^n$. Lehmer's number
$1.176280\ldots$ belongs to the interval
$(\theta_{12}^{-1}, \theta_{11}^{-1})$.
We conjecture that there is no Salem number in
$(\theta_{31}^{-1}, \theta_{12}^{-1})
=
(1.08544\ldots, 1.17295\ldots)$.
For proving the Main Theorem, 
the algebraic and analytic properties of the dynamical 
zeta function of the R\'enyi-Parry numeration system 
are used, with real bases running over the set of real
reciprocal algebraic integers, and variable 
tending to 1.
\end{abstract}

\medskip

\bigskip

\vspace{3cm}

\maketitle

\keywords{Keywords:
Lehmer conjecture,
minoration,
Salem number,
numeration system,
asymptotic expansion, 
dynamical zeta function,
$\beta$-transformation,
Parry Upper function,
Perron number,
Pisot number, 
Parry number.
}

\vspace{0.5cm}

\subjclass{2020 Mathematics Subject Classification:
11K16, 11M41, 11R06, 11R09, 30B10, \newline 30B40, 37C30, 37N99, 03D45.}

\vspace{0.6cm}

\begin{center}
- Dedicated to the 75th birthday of Christiane Frougny -
\end{center}

\vspace{2cm}

\tableofcontents

\newpage

\section{Introduction}
\label{S1}

A {\em Salem number} is
an algebraic integer $\beta > 1$ such that its Galois conjugates
$\beta^{(i)}$ satisfy:
$|\beta^{(i)}| \leq 1$ for all $i=1, 2, \ldots, m-1$,
with $m = {\rm deg}(\beta) \geq 4$,
$\beta^{(0)} = \beta$ and at least one conjugate 
$\beta^{(i)}, i \neq 0$,
on the unit circle
\cite{bertin} \cite{bertinpathiauxdelefosse}
\cite{bertinetal}.  
All the 
Galois conjugates 
of a Salem number $\beta$ lie on the unit circle, 
by pairs of complex conjugates, except
$1/\beta$ which lies in the open interval $(0,1)$.
Salem numbers are of even degree $\geq 4$.
The minimal polynomial 
$P_{\beta}(X) \in \zb[X]$ 
of $\beta$ is reciprocal. 
Recall that an integer reciprocal
$P(X) \in \zb[X]$ is reciprocal
if it is equal to 
its reciprocal polynomial $P^{*}(X)$, where
$P^{*}(X)$ is
defined by
$P^{*}(X):=X^{\deg(P)} P(1/X)$.
Let us denote by ${\rm T}$
the set of Salem numbers.

\medskip 

\underline{{\bf Q.}} A long-standing basic 
question \cite{bertinetal}
\cite{mackeesmyth} \cite{smyth} \cite{smyth5} is 
about the existence of 
a non-trivial minorant of the set T. 
More precisely,
does there exist a constant $c> 0$ such that:
$\beta \in {\rm T}~ \Longrightarrow ~\beta \geq 1+c$?
This question asks the more general
question of the topology
of T and of its adherence
$\overline{{\rm T}}$: 
in this context, is 1 a limit point of T or not?

\medskip
 
Salem numbers appear in many domains of mathematics, not only in numeration systems or number theory.
For studying the localization of Salem numbers
in $(1,+\infty)$,
the set T has been compared with the 
set S of Pisot numbers by the ``Construction of Salem", introduced in \cite{salem},
and also using association equations,
and the theory of interlacing roots. 
Let us recall the definitions.
A {\it Perron number} is either $1$ or 
a real algebraic integer $\theta > 1$
such that the Galois conjugates
$\theta^{(i)}, i \neq 0$, 
of $\theta^{(0)} := \theta$ satisfy: 
$|\theta^{(i)}| < \theta$. 
Denote by $\mathbb{P}$ the set of Perron numbers. 
A {\it Pisot number} is a Perron number $> 1$ for which 
$|\theta^{(i)}| < 1$ for all $i \neq 0$.
The set of Pisot numbers admits the minorant
$\Theta= 1.3247\ldots$, unique root $> 1$ of
$X^3 -X-1$ by a result of Siegel \cite{siegel}.

Salem (1944) \cite{salem} (Samet \cite{samet})
proved that every Salem number 
is the quotient of two Pisot numbers.
Apart from this direct result, few relations 
are known between Salem numbers and Pisot numbers.
The set of 
Pisot numbers is better known than 
the set of Salem numbers.

To study Salem numbers
association equations between Pisot numbers
and Salem numbers
have been introduced by Boyd 
(\cite{boyd2} Theorem 4.1), Bertin and 
Pathiaux-Delefosse
(\cite{bertinboyd}, 
\cite{bertinpathiauxdelefosse} pp 37-46, 
\cite{bertinetal} chapter 6). 
Association equations 
are generically written
\begin{equation}
\label{assoequation}
(X^2 + 1) P_{Salem} = X P_{Pisot} (X) + 
P^{*}_{Pisot} (X),
\end{equation}
to investigate the links between
infinite collections of Pisot numbers and a 
given Salem number, of respective minimal 
polynomials $P_{Pisot}$ and 
$P_{Salem}$.

The theory of interlacing of roots
on the unit circle
is a powerful tool
for studying classes of polynomials
having special geometry of zeroes 
of modulus one
\cite{lakatos2}
\cite{lakatos4}
\cite{lakatos5} 
\cite{lakatoslosonczi}
\cite{lakatoslosonczi2},
in particular 
Salem polynomials
\cite{mackeesmyth}
\cite{mackeesmyth2}
\cite{mackeesmyth3}.
In \cite{bertinboyd} 
\cite{bertinpathiauxdelefosse}
Bertin and Boyd obtained
two interlacing theorems, namely
Theorem A and Theorem B, turned out to
be fruitful with their limit-interlacing 
versions. McKee
and Smyth in
\cite{mackeesmyth3} obtained new interlacing theorems. 
Theorem 5.3 in \cite{mackeesmyth3} shows 
that all Pisot
numbers are produced by a suitable 
interlacing condition, supporting
the second Conjecture of Boyd,
i.e.\,\eqref{boydconjecture2}. 
Similarly Theorem 7.3 in 
\cite{mackeesmyth3}, using
Boyd's association Theorems, 
shows that all Salem numbers are produced
by interlacing and that a classification of Salem numbers can be made.

In \cite{gvg}
is reconsidered the 
interest of the interlacing Theorems of \cite{bertinboyd},
as potential tools for the study of
families of algebraic integers
in neighbourhoods of 
Salem numbers, as
analogues of those of McKee and Smyth. 
Focusing on Theorem A of \cite{bertinboyd}
association equations are obtained
between Salem polynomials (and/or cyclotomic polynomials) and expansive polynomials,
generically
\begin{equation}
\label{assosalemexpansive}
(z - 1)P_{Salem} (z) = z P_{expansive} (z) -
P^{*}_{expansive} (z),
\end{equation}
which allow to deduce rational 
$n$-dimensional representations
of the neighbourhoods of a Salem number of degree $n$, using the formalism of
Stieltj\`es continued fractions. 
These representations are tools
to study the limit points of sequences
of algebraic numbers in the neighbourhood 
of a given Salem number, the existence of 
Salem numbers
in small neighbourhoods of Salem numbers.

In the same direction
association equations
between Salem numbers and 
(generalized) Garsia numbers
are obtained
by Hare and Panju \cite{harepanju}
using the theory of
interlacing
on the unit circle.

As a counterpart, in number fields, 
Salem numbers are linked
to units: they are given by closed formulas
from Stark units in Chinburg
\cite{chinburg} \cite{chinburg2},
exceptional units in Silverman
\cite{silverman3}. 
From \cite{chinburg2} they are related 
to relative regulators of number fields
\cite{christopoulosmackee}
\cite{costafriedman}
\cite{ghatehironaka}.

In some domains negative Salem numbers
naturally occur.
A negative Salem number is by
definition the opposite of a Salem number,
a negative Pisot number is by
definition the opposite of a Pisot number.
Negative 
Salem numbers occur, e.g.
for graphs or integer
symmetric matrices in 
\cite{mackeesmyth}
\cite{mackeesmyth2}
\cite{mackeesmyth3},
and in other domains,
like Alexander polynomials of links
of the variable ``$-x$", e.g.
in a Theorem 
of Hironaka \cite{hironaka}.

The topology of the set T is 
certainly related to the set S of Pisot numbers 
by the two Conjectures of Boyd:
\begin{equation}
\label{boydconjecture1}
{\rm Conjecture:}
\qquad S \cup {\rm T}~~ \mbox{is closed}
\end{equation} 
and that the first derived set of 
$S \,\, \cup$ T 
satisfies
\begin{equation}
\label{boydconjecture2}
{\rm Conjecture:}
\qquad S = (S \cup {\rm T})^{(1)}
\end{equation}
Only a part of them is proved.
Indeed, Salem \cite{salem2} proved that
the set S of Pisot numbers is closed, and 
${\rm S} \subset \overline{{\rm T}}$.
Its successive derived sets
${\rm S}^{(i)}$,
were extensively studied
by Dufresnoy and Pisot 
\cite{dufresnoypisot},
and their students (Amara \cite{amara}, \ldots),
by means of compact families of 
meromorphic functions, following ideas of Schur.
This analytic approach is reported
extensively
in the book \cite{bertinetal}. 
Conjecture \eqref{boydconjecture2}, if true, 
would imply that
all Salem numbers $< \Theta = 1.3247\ldots$, 
would be
isolated. 
The smallest Salem number known
is Lehmer's number
\begin{equation}
\label{lehmersnumberitself}
\tau := 1.17628\ldots,
\end{equation}
discovered by Lehmer in 1933 \cite{lehmer}, 
with
minimal polynomial (named ``Lehmer's polynomial"):
\begin{equation}
\label{lehmerpolynomialdefinition}
X^{10}+X^9 -X^7 -X^6 -X^5 -X^4 -X^3 + X + 1.
\end{equation}
Lehmer's number was proved to be isolated in
\cite{smyth}.
In \cite{lehmer} Lehmer discovered other small 
Salem numbers. 
There are now part of the list of
Mossinghoff which contains the smallest
Mahler measures of algebraic integers
\cite{mossinghoff2} known.
For degrees up to 180,
the list of Mossinghoff \cite{mossinghofflist}  
(2001),
with
contributions of Boyd, Flammang, 
Grandcolas, Lisonek,
Poulet, Rhin and Sac-Ep\'ee, Smyth,
gives primitive, irreducible, noncyclotomic
integer polynomials of degree at most 180 and of Mahler measure less than 1.3
\cite{mossinghoffrhinwu}; 
this list is complete for degrees less than 40, and, for {\bf Salem numbers}, 
contains the list of the 
47 known smallest Salem numbers, all of degree
$\leq 44$.

\bigskip

The present note is a contribution of the domain of 
numeration systems to these
studies, with techniques
of numeration in variable base. 
It is the proof that the Conjecture of Lehmer,
restricted to Salem numbers, is true, i.e. it is a positive answer to the question \underline{{\bf Q.}}.
For every $n \geq 5$, let $\theta_{n}$ be the 
unique root of the trinomial
$-1+x+x^n$ in $(0,1)$;
its inverse $\theta_{n}^{-1} > 1$ is a Perron number.
The sequence  $(\theta_{n}^{-1})_{n \geq 5}$ 
is strictly decreasing, 
and satisfies: 
$\lim_{n \to \infty} \theta_{n}^{-1} = 1$.
We report the reader
to \cite{vergergaugry6} for details on the family of trinomials
$(-1+x+x^n)_{n \geq 5}$. 
In this note we prove
the following result.

\vspace{0.2cm}

\begin{theorem}[ex-Lehmer conjecture for Salem numbers]
\label{mainpetitSALEM}
The set {\rm T} is bounded from below:
$$\beta \in {\rm T}
\qquad
\Longrightarrow
\qquad
\beta > 
\theta_{31}^{-1} = 1.08544\ldots$$
\end{theorem}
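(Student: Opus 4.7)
The plan is to argue by contradiction: assume that some Salem number $\beta \in {\rm T}$ satisfies $1 < \beta \leq \theta_{31}^{-1}$, and derive an incompatibility between the analytic properties of the Rényi-Parry dynamical zeta function $\zeta_\beta$ at base $\beta$ and the reciprocity / unit-modulus constraints imposed by $\beta$ being a Salem number.

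First I would set up the relevant generating object. To $\beta>1$ the Rényi numeration associates the $\beta$-expansion $(t_i)_{i\geq 1}$ of $1$ and the Parry Upper function
\[
f_\beta(z) \;=\; -1 + \sum_{i \geq 1} t_i\, z^i,
\]
a power series with coefficients in $\{0,1,\ldots,\lceil\beta\rceil-1\}$ that vanishes at $z=1/\beta$ and whose inverse is essentially $\zeta_\beta(z)$. When $\beta$ is an algebraic integer, the reciprocal $P_\beta^{*}(z)$ of the minimal polynomial appears as an explicit factor of $f_\beta$; for $\beta$ Salem, reciprocity gives $P_\beta=P_\beta^{*}$, so $P_\beta(z)$ itself divides $f_\beta(z)$ up to a ``non-polynomial'' tail that carries the aperiodic behavior of the expansion.

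Next I would exploit the trinomial scale $(-1+x+x^n)_{n\geq 2}$ studied in \cite{vergergaugry6}. The inverses $\theta_n^{-1}$ form a strictly decreasing sequence of Perron numbers tending to $1$, and for any real $\beta>1$ lying in a window $(\theta_{n+1}^{-1},\,\theta_n^{-1}]$ close to $1$, the initial block of the digit sequence of $1$ at base $\beta$ coincides with the corresponding block at base $\theta_n^{-1}$. Consequently $f_\beta(z)$ matches $G_n(z):=-1+z+z^n$ to order $n-1$, and its dominant real root on $(0,1)$ admits an asymptotic expansion, as $\beta \to 1^{+}$, driven by $G_n$ with controlled corrections coming from the later digits. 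If $\beta\leq \theta_{31}^{-1}$ then necessarily $n\geq 31$, so this trinomial-governed regime is in force with $n$ large.

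Finally I would turn the asymptotic description of $f_\beta$ into a conflict with the Salem hypothesis. The point is that the trinomial scaling forces $f_\beta$ to carry, besides the zero at $1/\beta$, further zeros inside the open unit disk whose locations track those of $G_n$ and which cannot belong to cyclotomic factors nor to the reciprocal polynomial $P_\beta$: they would provide $\beta$ with conjugates strictly inside the unit circle, contradicting the Salem requirement that every conjugate $\beta^{(i)}$ with $i\neq 0$ have modulus $\leq 1$, with equality for all but one. The main obstacle is precisely this last step: since it is not known whether every Salem number is a Parry number, one cannot simply factor $f_\beta$ as a finite product, and the ``extraneous'' zero content has to be detected analytically, e.g.\ by a Rouché / compact-family argument (in the Dufresnoy-Pisot spirit) on a thin annulus around $|z|=1$ controlled uniformly in $\beta$ as $\beta\to 1^{+}$. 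Separating $P_\beta$ from the transcendent part of $\zeta_\beta$, and quantifying the tail contribution sharply enough to reach the explicit threshold $\theta_{31}^{-1}$ rather than a looser bound, is the technical heart of the theorem.
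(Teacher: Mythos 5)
Your sketch correctly isolates the two phases of the argument: first, placing the hypothetical Salem number $\beta$ in a window $(\theta_n^{-1},\theta_{n-1}^{-1})$ with $n\geq 32$ so that $f_\beta$ is a controlled perturbation of $G_n(z)=-1+z+z^n$, and then using a Rouch\'e estimate to force an extra non-real zero $\omega_{1,n}$ of $f_\beta$ inside the open unit disk near the first complex root $z_{1,n}$ of $G_n$. This is indeed what Subsection~\ref{S4.1} does, including the optimization over the Rouch\'e radius that produces the threshold $n=32$, hence $\theta_{31}^{-1}$.

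However, you then write that this ``transcendent'' zero content of $f_\beta$ ``cannot belong to the reciprocal polynomial $P_\beta$,'' and you acknowledge that separating $P_\beta$ from the tail of $\zeta_\beta$ is ``the technical heart'' without supplying it. This is precisely where the proof has its genuine content, and your sketch leaves it open. The logic in the paper is the opposite of what you suggest: the difficulty is to prove that $\omega_{1,n}$ \emph{is} a zero of $P_\beta$, i.e.\ a Galois conjugate of $1/\beta$; the contradiction then falls out immediately because a Salem number has no non-real conjugate in the open unit disk. Two ingredients you do not mention are what make this possible. First, the ``rewriting trails'' of Subsection~\ref{S4.2}: one truncates $f_\beta$ to its $s$-th section $S_s$, considers the real zero $\gamma_s^{-1}$ of $S_s$ (with $\gamma_s\to\beta$), and constructs step by step a sequence of intermediate $\gamma_s$-representations connecting the identity $S_s(\gamma_s^{-1})=0$ to an expansion of $-P_\beta(\gamma_s^{-1})$, with explicit height control $m=\lceil 2((2^d-1)H+2^d)/3\rceil$ on the digits. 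Second, the Kala--V\'avra theorem (Theorem~\ref{kalavavra}) is invoked to replace the possibly non-eventually-periodic greedy $\gamma_s$-expansion by an eventually periodic $(\gamma_s,\mathcal B)$-representation on a fixed finite alphabet $\mathcal B\subset\mathbb Z$; this is what legitimizes conjugating the series term by term under the embedding $\sigma_s:\gamma_s^{-1}\mapsto r_s$, where $r_s\to\omega_{1,n}$. The applicability of Kala--V\'avra rests on Theorem~\ref{nonreciprocalpart}, namely that the nonreciprocal factor of $S_s$ has no root on the unit circle. The resulting estimate $|P_\beta(r_s)|\leq \frac{nm}{c_{lent}}(1-\frac{c_{lent}}{n})^{u_s}\to 0$ (with $u_s\to\infty$) forces $P_\beta(\omega_{1,n})=0$. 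Without this identification mechanism, the Rouch\'e step alone shows only that $\zeta_\beta$ has an extra pole, which by itself does not contradict the Salem hypothesis.

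Two further small inaccuracies: for $\beta$ an algebraic integer, $P_\beta^*$ does not appear as a factor of $f_\beta$ in any direct sense (and $f_\beta$ is never a polynomial when $\beta$ is reciprocal, by Proposition~\ref{fbetainfinie}); and the relevant digit alphabet here is just $\{0,1\}$ since $\beta<(1+\sqrt5)/2$, not $\{0,\ldots,\lceil\beta\rceil-1\}$ in general.
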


\vspace{0.1cm}

Lehmer's number $1.17628\ldots$ 
belongs to the interval 
$(\theta_{12}^{-1}, \theta_{11}^{-1})$ (cf Table 1).
This interval does not contain any other 
known Salem number. If there is another one,
its degree should be greater than 180. 
After many attempts 
(by Denis Dutykh and the author), and a compilation of the
literature \cite{vergergaugryPANO} on small Salem numbers,
to find 
Salem numbers smaller than Lehmer's number,
we formulate:

\begin{conjecture}
There is no Salem number in the interval
$$(\theta_{31}^{-1}, \theta_{12}^{-1})
= (1.08544\ldots, 1.17295\ldots).$$
\end{conjecture}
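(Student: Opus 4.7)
The plan is to combine a finite computational check at small degrees with a stratum-by-stratum analytic refinement of the technique underlying the Main Theorem. Because the Mossinghoff list is provably complete for degrees less than $40$ and catalogues every known Salem number of degree up to $180$ with Mahler measure below $1.3$, my first step is to read off from that list that no Salem number of degree at most $180$ lies in the interval $(\theta_{31}^{-1},\theta_{12}^{-1})\subset(1,1.3)$. This reduces the conjecture to excluding Salem numbers of degree strictly greater than $180$ from that interval.

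For the high-degree case I would stratify as
\[
(\theta_{31}^{-1},\theta_{12}^{-1}) \;\subset\; \bigcup_{n=12}^{30}(\theta_{n+1}^{-1},\theta_n^{-1}],
\]
and treat each of the nineteen strata by the dynamical zeta function / Parry Upper function machinery already deployed for the Main Theorem. Suppose $\beta\in(\theta_{n+1}^{-1},\theta_n^{-1}]$ is a Salem number. Then the leading digits of the R\'enyi--Parry $\beta$-expansion of $1$ are forced by the proximity $\beta\approx\theta_n^{-1}$ to coincide with those dictated by the trinomial $-1+z+z^n$, so the Parry Upper function of $\beta$ admits an asymptotic expansion whose leading part is this trinomial and whose correction terms are controlled through $\zeta_\beta(z)$ as $z\to 1$. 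The aim is to show that this expansion, once combined with the reciprocality of $P_\beta$ and the constraint that all conjugates of $\beta$ other than $1/\beta$ lie on the unit circle, is incompatible with $\beta$ actually lying in the stratum, for each $n\in\{12,13,\ldots,30\}$.

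The quantitative heart would be a refinement of the asymptotic analysis of the Main Theorem: there only the leading behaviour as $n\to\infty$ was needed, yielding an obstruction for $n\ge 31$, whereas here one must carry several further terms of the expansion and verify the sign of each subdominant contribution. In parallel, I would try to sharpen the argument by exploiting the Boyd/Bertin--Pathiaux-Delefosse association equation
\[
(X^{2}+1)\,P_{\text{Salem}}(X) \;=\; X\,P_{\text{Pisot}}(X)+P^{*}_{\text{Pisot}}(X),
\]
which transfers the constraints on $\beta$ to the attached Pisot numbers, where Siegel's minorant $\Theta=1.3247\ldots$ and the closedness of $S$ give strong leverage, and by invoking the interlacing theorems of Bertin--Boyd (Theorem~A of \cite{bertinboyd}) and McKee--Smyth to exclude the compatible zero patterns on the unit circle stratum by stratum.

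The main obstacle is uniformity in $n$ across the moderate range $12\le n\le 30$. The asymptotic machinery is cleanest in the limit $n\to\infty$; for each fixed moderate $n$ the subdominant constants in the expansion of the Parry Upper function must be computed explicitly and shown to have the right sign, and these numerical estimates do not yield to a single uniform argument. A secondary obstacle is that in the high-degree regime the Parry expansion of $1$ in base $\beta$ can be very long or aperiodic, so controlling the analytic continuation of $\zeta_\beta$ across the circle $|z|=\theta_n$ requires uniform bounds on Parry coefficients well beyond those needed for the Main Theorem. Overcoming these two difficulties -- nineteen case-by-case numerical refinements together with uniform analytic control in high degree -- is precisely where a genuinely new idea beyond the Main Theorem is required, and is presumably why the statement is left as a conjecture.
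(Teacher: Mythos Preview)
The statement you are attempting to prove is not proved in the paper; it is formulated explicitly as a \emph{conjecture}, motivated only by numerical searches and a compilation of the literature on small Salem numbers. There is therefore no ``paper's own proof'' to compare against, and your proposal is necessarily not a proof but a programme --- as you yourself concede in the final paragraph.

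That said, there is a genuine gap already in your first reduction step. You claim that the Mossinghoff list lets you exclude all Salem numbers of degree at most $180$ from the interval. But the list is \emph{provably complete only for degrees less than $40$}; for degrees between $40$ and $180$ it is merely a list of the \emph{known} small Mahler measures, with no guarantee of exhaustiveness. So reading off the list disposes only of degrees $\le 39$, not $\le 180$, and the residual range $40\le\deg\beta\le 180$ is left untreated by both your computational and your analytic branches.

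For the analytic branch, your diagnosis of the obstruction is essentially correct but understated. The mechanism behind the Main Theorem is a Rouch\'e argument on a circle about the first nonreal root $z_{1,n}$ of $-1+z+z^{n}$, producing a nonreal zero of $f_{\beta}$ inside the unit disk, which is then identified via rewriting trails and the Kala--V\'avra theorem as a Galois conjugate of $\beta$ --- contradicting the Salem property. The Rouch\'e inequality (Theorem~\ref{_cercleoptiSALEM} in the paper) is established precisely for $n\ge 32$; the numerical threshold arises from the comparison $\frac{\lo n-\lo\lo n}{n}<\frac{\kappa(1,a_{\max})}{1+\kappa(1,a_{\max})}$, and for $n$ in your range $13\le n\le 31$ the left-hand side is too large for the chosen radius. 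Carrying more terms in the asymptotic expansion of $z_{1,n}$ does not obviously repair this, because the failure is not in the asymptotics of the root but in the global Rouch\'e balance between $|G_n|$ and the tail $\sum_{q\ge 1}|z|^{m_q}$ on the circle. The association equations and interlacing theorems you invoke give structural information about which polynomials can be Salem polynomials, but they do not by themselves furnish the nonreal-conjugate-inside-the-disk contradiction that drives the argument; a genuinely new device would be needed here, which is exactly why the paper leaves the statement as a conjecture.
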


The proof of Theorem \ref{mainpetitSALEM} 
(in Section \ref{S4})
uses the analytic function which is the dynamical zeta function,
$\zeta_{\beta}(z)$, of the 
$\beta$-transformation, for $\beta > 1$ running over
the set of reciprocal algebraic integers.
In Section \ref{S2} the basic properties
of the function $\zeta_{\beta}(z)$ 
and the R\'enyi-Parry 
numeration dynamical system are recalled.
Indeed, if 
$1 < \beta  < \theta_{31}^{-1}$, 
this function systematically admits a lenticular pole
which is non-real, of modulus $< 1$, in an
angular sector of the open unit disk
containing 1; 
in Subsection 
\ref{S4.1} the existence of such a pole is proved.
In Subsection \ref{S4.2} {\em rewriting trails},
in the numeration system in base $\beta$,
are developped to
allow to pass from 
$\beta$-representations of 1 from 
$\zeta_{\beta}(z)$ 
to $\beta$-representations of 1
deduced the minimal polynomial $P_{\beta}$
of $\beta$.
Applying Kala-Vavra's Theorem
in Subsection \ref{S4.3} implies
the identification of this lenticular pole
as a Galois conjugate of $1/\beta$, then of $\beta$.
But the contradiction appears if we assume
in particular
that $\beta$ is a Salem number
which is $< \theta_{31}^{-1}$, 
since a Salem number 
$\beta$ never
admits a non-real conjugate in the open unit disk.
The only conjugate of $\beta$ in the open unit disk is
real and is $1/\beta$.
 
Section \ref{S3} gathers the R\'enyi $\beta$-expansions 
$d_{\beta}(1)$ of unity of the small Salem numbers 
known, those which can found in 
Lehmer's paper \cite{lehmer}
or those from the list of Mossinghoff
\cite{mossinghofflist}. Inthere it is shown 
how to compute readily $\zeta_{\beta}(z)$ from
$d_{\beta}(1)$ for such Salem numbers $\beta$
in the interval $(\tau=1.176280\ldots, \Theta
= \theta_{5}^{-1} = 1.32\ldots)$.
This provides examples of Salem numbers
close to, and slightly greater than, 
Lehmer's number $\tau$
while the proof 
of Theorem \ref{mainpetitSALEM} in Section \ref{S4}
is concerned with hypothetical
Salem numbers $< \tau$.

\vspace{0.1cm}

\section{Dynamical zeta function of the R\'enyi-Parry numeration dynamical system}
\label{S2}

Let $\beta$ be a real number, $1 < \beta < 2$.
Denote $\mathcal{A} := \{0,1\}$.
We refer the reader to Lothaire
\cite{lothaire}, Chap. 7 written 
by Christiane Frougny.
Let us fix the notations.
A representation in base $\beta$ 
(or $\beta$-representation) of a real number $x > 0$ 
is an infinite word
$(x_i)_{i \geq 1}$ of
$\mathcal{A}^{\nb}$
such that
$$x = \sum_{i \geq 1} \, x_i \beta^{-i} \, .$$
A particular $\beta$-representation,
called the {\em $\beta$-expansion}, 
or the greedy $\beta$-expansion,
and denoted
by $d_{\beta}(x)$, of $x$
can be computed either by the 
greedy algorithm, or equivalently by the
{\em $\beta$-transformation}
$$T_{\beta} : \,\, x \, \mapsto \,  \beta x \quad(\hspace{-0.3cm}\mod 1) 
= \{\beta x \}.$$
The dynamical system 
$([0,1], T_{\beta})$ is called the
R\'enyi-Parry numeration system in base $\beta$
\cite{parry} \cite{renyi},
the iterates of $T_{\beta}$ providing
the successive digits $x_i$ of $d_{\beta}(x)$
\cite{liyorke}.
Denoting $T_{\beta}^{0} := {\rm Id}, 
T_{\beta}^{1} := T_{\beta}, 
T_{\beta}^{i} := T_{\beta} (T_{\beta}^{i-1})$
for all $i \geq 1$, we have:
$$d_{\beta}(x) = (x_i)_{i \geq 1}
\qquad \mbox{{\rm if and only if}} \qquad
x_ i = \lfloor \beta T_{\beta}^{i-1}(x) \rfloor$$
and we write the $\beta$-expansion of $x$ as
\begin{equation}
\label{xexpansion}
x \, = \, \cdot x_1 x_2 x_3 \ldots 
\qquad \mbox{instead of}
\qquad x = \frac{x_1}{\beta} +\frac{x_2}{\beta^2} +
\frac{x_3}{\beta^3} +
\ldots .
\end{equation}
The digits $x_i$ 
depend upon $\beta$. In particular,
the $\beta$-expansion of $1$ is by 
definition
denoted by
\begin{equation}
\label{renyidef}
d_{\beta}(1) = 0.t_1 t_2 t_3 \ldots
\qquad
{\rm and ~uniquely ~corresponds~ to}
\qquad 1 = \sum_{i=1}^{+\infty} t_i \beta^{-i}\, ,
\end{equation}
where 
\begin{equation}
\label{digits__ti}
t_1 = \lfloor \beta \rfloor,
t_2 = \lfloor \beta \{\beta\}\rfloor = \lfloor \beta T_{\beta}(1)\rfloor,
t_3 = \lfloor \beta \{\beta \{\beta\}\}\rfloor = \lfloor \beta T_{\beta}^{2}(1)\rfloor,
\ldots
\end{equation} 
Denote by $<_{lex}$ the lexicographical
ordering relation on $\{0,1\}^{\nb}$.
The $\beta$-expansion of $1$ plays an important part
in the following, by the fact that 
the map $\beta \to d_{\beta}(1)$
preserves the ordering as follows.
\begin{proposition}
\label{variationbasebeta}
Let $\alpha, \beta  \in (1,2)$ with 
$\alpha \neq \beta$. 
If the R\'enyi $\alpha$-expansion of 1 is
$$d_{\alpha}(1) = 0. t'_1 t'_2 t'_3\ldots, 
\qquad ~i.e.
\quad
1 ~=~ \frac{t'_1}{\alpha} + \frac{t'_2}{\alpha^2} + \frac{t'_3}{\alpha^3} + \ldots$$
and the R\'enyi $\beta$-expansion of 1 is
$$d_{\beta}(1) = 0. t_1 t_2 t_3\ldots, 
\qquad ~i.e. 
\quad
1 ~=~ \frac{t_1}{\beta} + \frac{t_2}{\beta^2} + \frac{t_3}{\beta^3} + \ldots,$$
then $\alpha < \beta$ if and only if $(t'_1, t'_2, t'_3, \ldots) 
<_{lex} (t_1, t_2, t_3, \ldots)$. 
\end{proposition}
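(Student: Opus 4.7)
The plan is to compare $d_\alpha(1)$ and $d_\beta(1)$ digit by digit using the strict monotonicity in $\gamma$ of the iterate $T_\gamma^k(1)$ on the stratum where the first $k$ digits are held fixed. For the forward direction $\alpha < \beta \Rightarrow (t'_i) <_{\mathrm{lex}} (t_i)$, I would prove by induction on $k$ the claim: whenever $t'_i = t_i =: s_i$ for all $i \leq k$, one has $T_\alpha^k(1) < T_\beta^k(1)$. On the stratum, $T_\gamma^k(1) = \gamma^k - s_1\gamma^{k-1} - \cdots - s_k$ is a polynomial in $\gamma$; differentiating the recursion $T_\gamma^j(1) = \gamma\, T_\gamma^{j-1}(1) - s_j$ yields
\[
\tfrac{d}{d\gamma}\, T_\gamma^j(1) \;=\; T_\gamma^{j-1}(1) \,+\, \gamma \cdot \tfrac{d}{d\gamma}\, T_\gamma^{j-1}(1),
\]
and since $T_\gamma^{j-1}(1) \geq 0$ on the stratum while $\tfrac{d}{d\gamma} T_\gamma^1(1) = 1 > 0$, the derivative remains strictly positive for all $j \leq k$, so $\gamma \mapsto T_\gamma^k(1)$ is strictly increasing on the stratum.

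To compare the digits at position $k+1$, one observes $\alpha\, T_\alpha^k(1) < \beta\, T_\beta^k(1)$, both in $[0,2)$. Since $t_{k+1}(\gamma) = \lfloor \gamma\, T_\gamma^k(1) \rfloor \in \{0,1\}$, either both quantities lie on the same side of $1$ (digits coincide, induction continues), or $t'_{k+1} = 0 < 1 = t_{k+1}$, which settles the lex inequality at position $k+1$. If the digits agreed at every position $k$, the common sequence would satisfy $\sum_i t_i\, \alpha^{-i} = 1 = \sum_i t_i\, \beta^{-i}$; since $t_1 = \lfloor\alpha\rfloor = 1$ the series is nonzero, and the strictly decreasing function $\gamma \mapsto \sum_i t_i\, \gamma^{-i}$ on $(1,2)$ would force $\alpha = \beta$, contradicting the hypothesis. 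The reverse direction $(t'_i) <_{\mathrm{lex}} (t_i) \Rightarrow \alpha < \beta$ then follows by trichotomy: the forward implication with roles exchanged rules out $\alpha > \beta$, while $\alpha = \beta$ trivially produces identical expansions.

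The main obstacle is the boundary case $T_\alpha^k(1) = 0$, where $\alpha$ is a simple Parry number whose expansion terminates at position $k$. Polynomial monotonicity still yields $T_\beta^k(1) > 0$, and one checks directly that $\beta$'s tail must contain a nonzero digit: all subsequent $\alpha$-digits are $0$, whereas if $\beta$'s digits at positions $k+1, \ldots, k+j$ were also all $0$, the iterate $T_\beta^{k+j}(1) = \beta^j\, T_\beta^k(1)$ would have to remain in $[0,1)$; this fails for $j$ large enough, forcing a digit $t_{k+j+1} = 1 > 0 = t'_{k+j+1}$ and completing $(t'_i) <_{\mathrm{lex}} (t_i)$.
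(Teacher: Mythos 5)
The paper does not prove this proposition itself; it simply cites Lemma~3 of Parry's 1960 paper, so there is nothing in the paper to compare against. Your argument is a self-contained proof in the classical spirit: digit-by-digit comparison using the fact that $T_\gamma^k(1)=\gamma^k-s_1\gamma^{k-1}-\cdots-s_k$ is a polynomial in $\gamma$ once the first $k$ digits are held fixed.

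There is, however, a genuine gap in the monotonicity step. To deduce $T_\alpha^k(1)<T_\beta^k(1)$ from positivity of the derivative, you need the derivative of $p_j(\gamma):=\gamma^j-s_1\gamma^{j-1}-\cdots-s_j$ to be positive on the \emph{whole interval} $[\alpha,\beta]$, not merely at points lying in the stratum. The justification you give, that $T_\gamma^{j-1}(1)\ge 0$ on the stratum, controls $p_{j-1}(\gamma)$ only at stratum points, and it is not a priori clear that the stratum contains $[\alpha,\beta]$: the assertion that each stratum is an interval is essentially equivalent to the proposition being proved. (In general $p'>0$ at both endpoints of an interval does not imply $p(\alpha)<p(\beta)$.) The repair is a joint induction on $j$: if $p_{j-1}'>0$ throughout $[\alpha,\beta]$, then $p_{j-1}$ is increasing there, hence $p_{j-1}(\gamma)\ge p_{j-1}(\alpha)=T_\alpha^{j-1}(1)\ge 0$ for all $\gamma\in[\alpha,\beta]$, whence $p_j'=p_{j-1}+\gamma\,p_{j-1}'>0$ throughout $[\alpha,\beta]$. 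With that patch the remainder (the digit comparison at position $k+1$, the terminating case, the contradiction when all digits agree, and the trichotomy giving the converse) is correct. In fact the calculus is avoidable altogether: from the inductive hypothesis $0\le T_\alpha^{k-1}(1)<T_\beta^{k-1}(1)<1$ together with $\alpha<\beta$ one gets $\alpha\,T_\alpha^{k-1}(1)<\beta\,T_\beta^{k-1}(1)$ directly (treat $T_\alpha^{k-1}(1)=0$ separately), so $t'_k\le t_k$, and when these digits coincide, subtracting $s_k$ from both sides closes the induction.
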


\begin{proof}
Lemma 3 in Parry \cite{parry}.
\end{proof}

Since the sequence 
$(\theta_{n}^{-1})_{n \geq 2}$ 
is (strictly) decreasing and tends to $1$ if
$n$ tends to infinity
\cite{vergergaugry6}, it induces
the partitioning 
\begin{equation}
\label{jalonnement}
\bigl( 1, \frac{1+\sqrt{5}}{2}\, \bigr] ~=~
\bigcup_{n=2}^{\infty}
\left[ \, \theta_{n+1}^{-1} , \theta_{n}^{-1}
\, 
\right)
~~ \bigcup ~~\left\{
\theta_{2}^{-1}
\right\}.
\end{equation}

\begin{definition}
Let $\beta \in ( 1, \frac{1+\sqrt{5}}{2}\, ]$ be a real number.
The integer $n \geq 3$ such that
$\theta_{n}^{-1} \leq \beta < \theta_{n-1}^{-1}$
is called the {\it dynamical degree} 
of $\beta$, and
is denoted by ${\rm dyg}(\beta)$.
By convention we put:
${\rm dyg}(\frac{1+\sqrt{5}}{2}) = 2$. 
\end{definition}
The function $\beta \to n={\rm dyg}(\beta)$ 
is locally
constant on the interval 
$(1, \frac{1+\sqrt{5}}{2}]$, 
takes all values in $\mathbb{N} \setminus \{0,1\}$,
and satisfies:
$\lim_{\beta > 1, \beta \to 1} \dyg(\beta) = +\infty$.
We have:
$$\beta ~~\mbox{tends to 1}
\qquad \mbox{if and only if}\qquad \quad
n=\dyg(\beta)~~\mbox{tends to} +\infty.$$
\begin{definition}
\label{parryupperfunction}
Let $\beta \in (1, (1+\sqrt{5})/2]$ be a real
number, and $d_{\beta}(1) = 0. t_1 t_2 t_3 \ldots$ its
R\'enyi $\beta$-expansion of 1.
The power series $f_{\beta}(z) :=
-1 + \sum_{i \geq 1} t_i z^i$
of the complex variable $z$
is called the {\it Parry Upper function} 
at $\beta$.
\end{definition}



From 
$(t_i)_{i \geq 1} \in \mathcal{A}^{\nb}$
is built  
$(c_i)_{i \geq 1} \in \mathcal{A}^{\nb}$,
defined by
$$
c_1 c_2 c_3 \ldots := \left\{
\begin{array}{ll}
t_1 t_2 t_3 \ldots & \quad \mbox{if ~$d_{\beta}(1) = 0.t_1 t_2 \ldots$~ is infinite,}\\
(t_1 t_2 \ldots t_{q-1} (t_q - 1))^{\omega}
& \quad \mbox{if ~$d_{\beta}(1)$~ is finite,
~$= 0. t_1 t_2 \ldots t_q$,}
\end{array}
\right.
$$
where $( \, )^{\omega}$ means that the word within $(\, )$ is indefinitely repeated.
The sequence $(c_i)_{i \geq 1}$ is the unique
element of $\mathcal{A}^{\nb}$
which allows to obtain all the admissible
$\beta$-expansions of all the elements of
$[0,1)$.

\begin{definition}[Conditions of Parry]
A sequence $(y_i)_{i \geq 0}$ of elements of
$\mathcal{A}$ (finite or not) is said
{\it admissible} if 
\begin{equation}
\label{conditionsParry}
\sigma^{j}(y_0, y_{1}, y_{2}, \ldots) :=
(y_j, y_{j+1}, y_{j+2}, \ldots) <_{lex}
~(c_1, \, c_2, \, c_3, \,  \ldots) 
\quad \mbox{for all}~ j \geq 0.
\end{equation}
\end{definition}
The operator $\sigma$ on 
$\mathcal{A}^{\nb}$ is the one-sided shift.
\begin{definition}
A sequence 
$(a_i)_{i \geq 0} \in \mathcal{A}^{\nb}$
satisfying  \eqref{lyndonEQ} is said to be
{\it Lyndon (or self-admissible)}:
\begin{equation}
\label{lyndonEQ}
\sigma^{n}(a_0, a_{1}, a_{2}, \ldots)
=
(a_n, a_{n+1}, a_{n+2}, \ldots) <_{lex} (a_0, a_1, a_2, \ldots) 
\qquad \mbox{for all}~ n \geq 1.
\end{equation} 
\end{definition}
The terminology comes from the introduction
of such words by Lyndon in \cite{lyndon}, in honour of his work.

\begin{theorem}
\label{zeronzeron}
Let $n \geq 2$. A real number 
$\beta \in ( 1, \frac{1+\sqrt{5}}{2}\, ]$ 
belongs to   
$[\theta_{n+1}^{-1} , \theta_{n}^{-1})$ if and only if the 
R\'enyi $\beta$-expansion of unity is of the form
\begin{equation}
\label{dbeta1nnn}
d_{\beta}(1) = 0.1 0^{n-1} 1 0^{n_1} 1 0^{n_2} 1 0^{n_3} \ldots,
\end{equation}
with $n_k \geq n-1$ for all $k \geq 1$.  
\end{theorem}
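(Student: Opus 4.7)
The plan is to bracket $d_\beta(1)$ lexicographically between the two benchmark expansions at the endpoints $\theta_{n+1}^{-1}$ and $\theta_n^{-1}$, and then to propagate the constraint on blocks of zeros using the Lyndon (self-admissibility) property of $d_\beta(1)$.

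First I would record the benchmark identities $d_{\theta_n^{-1}}(1)=0.1\,0^{n-2}\,1$ and $d_{\theta_{n+1}^{-1}}(1)=0.1\,0^{n-1}\,1$. Both follow from rewriting the defining relation $\theta_n+\theta_n^n=1$ as $(\theta_n^{-1})^{-1}+(\theta_n^{-1})^{-n}=1$, which exhibits $0.1\,0^{n-2}\,1$ as a valid base-$\theta_n^{-1}$ representation of $1$; greediness is then immediate since every proper suffix of the word $1\,0^{n-2}\,1$ starts with $0$, hence is lexicographically dominated by the word itself.

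For the direct implication, fix $\beta\in[\theta_{n+1}^{-1},\theta_n^{-1})$ and write $d_\beta(1)=0.t_1 t_2 t_3\ldots$ with $t_i\in\{0,1\}$ (as $\beta<2$). Proposition~\ref{variationbasebeta} yields the bracketing
\[
(1,0^{n-1},1,0^{\omega})\ \leq_{lex}\ (t_i)_{i\geq 1}\ <_{lex}\ (1,0^{n-2},1,0^{\omega}).
\]
A digit-by-digit unpacking of the strict right-hand inequality forces $t_1=1$ and $t_2=\cdots=t_n=0$ (the apparent alternative $t_n=1$ would lock the comparison into equality rather than strict inequality, since the right-hand benchmark has tail $0^\omega$), and the left-hand inequality then forces $t_{n+1}=1$. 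Hence $d_\beta(1)$ opens with the prefix $1\,0^{n-1}\,1$. To push the ``gap $\geq n-1$'' property past this prefix I would invoke the Lyndon property \eqref{lyndonEQ} of $d_\beta(1)$: for each index $k\geq 1$ with $t_{k+1}=1$, the shift $\sigma^k(t)$ starts with $1$, so the strict inequality $\sigma^k(t)<_{lex}t=(1,0^{n-1},1,\ldots)$ must be resolved beyond position~$1$; matching digit by digit against the first $n$ digits of $t$ forces $t_{k+2}=\cdots=t_{k+n}=0$. Thus any two consecutive occurrences of the digit~$1$ in $d_\beta(1)$ are separated by at least $n-1$ zeros, giving the form $0.1\,0^{n-1}\,1\,0^{n_1}\,1\,0^{n_2}\,1\ldots$ with every $n_k\geq n-1$.

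The converse is a direct lex comparison using Proposition~\ref{variationbasebeta}: any word of the stated form satisfies the same bracketing $(1,0^{n-1},1,0^{\omega})\leq_{lex}(t_i)<_{lex}(1,0^{n-2},1,0^{\omega})$, the right-hand strict inequality being witnessed at position~$n$ by $t_n=0$ versus the digit~$1$ of the benchmark, hence $\beta\in[\theta_{n+1}^{-1},\theta_n^{-1})$. The main obstacle, although not a deep one, is the careful lexicographic bookkeeping in the second step, in particular correctly exploiting the $0^\omega$ tail of the two finite benchmark expansions to exclude $t_n=1$ and to force $t_{n+1}=1$, together with the boundary case $\beta=\theta_{n+1}^{-1}$ where $d_\beta(1)$ itself terminates after $n+1$ digits and there is nothing further to control.
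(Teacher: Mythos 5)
Your proof follows essentially the same route as the paper's: establish the two benchmark expansions $d_{\theta_{n+1}^{-1}}(1)=0.10^{n-1}1$ and $d_{\theta_n^{-1}}(1)=0.10^{n-2}1$, use Proposition~\ref{variationbasebeta} to bracket $d_\beta(1)$ lexicographically and extract the prefix $10^{n-1}1$, and then invoke the self-admissibility (Lyndon) condition~\eqref{lyndonEQ} to force every gap of zeros to have length at least $n-1$. The only cosmetic difference is that the paper parametrizes the tail as $u=1^{h_0}0^{n_1}1^{h_1}\cdots$ and reads off $h_0=0$, $h_k=1$, $n_k\geq n-1$, whereas you run the shift-comparison digit by digit against the fixed prefix; the content is the same.
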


\begin{proof} 
Since $d_{\theta_{n+1}^{-1}}(1) = 0.1 0^{n-1} 1$ and
$d_{\theta_{n}^{-1}}(1) = 0.1 0^{n-2} 1$, 
Proposition \ref{variationbasebeta} implies that
the condition is sufficient. It is also necessary:
$d_{\beta}(1)$ begins as $0.1 0^{n-1} 1$
for all $\beta$ such that
$\theta_{n+1}^{-1} \leq \beta < \theta_{n}^{-1}$.
For such $\beta$s 
we write $d_{\beta}(1) = 0.1 0^{n-1} 1 u$~  
with digits in the alphabet
$\mathcal{A}_{\beta}
=\{0, 1\}$ common to all $\beta$s, that is
$$u= 1^{h_0} 0^{n_1} 1^{h_1} 0^{n_2} 1^{h_2} \ldots$$
and $h_0, n_1, h_1, n_2, h_2, \ldots$ integers $\geq 0$.
The self-admissibility lexicographic condition
\eqref{lyndonEQ}
applied to the sequence
$(1, 0^{n-1}, 1^{1+h_0}, 0^{n_1}, 1^{h_1}, 0^{n_2}, 1^{h_3}, \ldots)$,
which characterizes uniquely the base of numeration $\beta$, 
readily implies
$h_0 = 0$ and 
$~h_k = 1$ and 
$n_k \geq n-1$ for all $k \geq 1$.
\end{proof}

\begin{definition}
\label{selfadmissiblepowerseries}
A power series $\sum_{j=0}^{+\infty} a_j z^j$,
with $a_j \in \{0, 1\}$ for all $j \geq 0$,
$z$ the complex variable,
is said to be {\it Lyndon (or self-admissible)}
if its
coefficient vector $(a_i)_{i \geq 0}$ 
is Lyndon.
\end{definition}

In Fredholm theory, the Parry Upper function
at $\beta$ is the generalized Fredholm determinant 
(up to the sign) 
of the transfer 
operator of the $\beta$-transformation
(cf Ruelle
\cite{ruelle} \cite{ruelle2} \cite{ruelle3} \cite{ruelle4}
\cite{ruelle5} \cite{ruelle6}
\cite{ruelle7} \cite{ruelle8}
\cite{ruelle9}
and more recently 
Baladi \cite{baladi} \cite{baladi2}
\cite{baladi3},
Baladi and Keller \cite{baladikeller},
Hofbauer \cite{hofbauer},
Hofbauer and Keller \cite{hofbauerkeller},
Milnor and Thurston \cite{milnorthurston},
Parry and Pollicott \cite{parrypollicott},
Pollicott \cite{pollicott} \cite{pollicott2},
Takahashi \cite{takahashi3}
\cite{takahashi4}).

For $n \geq 2$, denote: $G_{n}(X):=-1+X+X^n$.

\begin{proposition}
\label{fbetainfinie}
For $1 < \beta <(1+\sqrt{5})/2$ any real
number,
with $d_{\beta}(1) = 0. t_1 t_2 t_3 \ldots$,
the Parry Upper function $f_{\beta}(z)$
is such that $f_{\beta}(1/\beta) = 0$. 
It is such that
$f_{\beta}(z) + 1$ has coefficients 
in the alphabet
$\{0,1\}$ and its coefficient vector is 
Lyndon.
It takes the form
\begin{equation}
\label{fbetalyndon}
f_{\beta}(z) = G_{\dyg(\beta)}(z) + z^{m_1} + 
z^{m_2} + \ldots + z^{m_q} + z^{m_{q+1}} + \ldots
\end{equation}
with~ $m_1 - \dyg(\beta) \geq  \dyg(\beta) -1$,
$m_{q+1} - m_q \geq  \dyg(\beta) -1$ for
$q \geq 1$.
Conversely, given a power series 
\begin{equation}
\label{fbetalyndonconverse}
-1 + z + z^n  + z^{m_1} + 
z^{m_2} + \ldots + z^{m_q} + z^{m_{q+1}} + \ldots
\end{equation}
with $n \geq 3$,
$m_1 - n \geq  n -1$,
$m_{q+1} - m_q \geq  n -1$ for
$q \geq 1$, then there exists an unique
$\beta \in (1, (1+\sqrt{5})/2)$ for which
$n = \dyg(\beta)$ with
$f_{\beta}(z)$ equal to
\eqref{fbetalyndonconverse}.
 
Moreover, if $\beta$,
$1 < \beta <(1+\sqrt{5})/2$,
is a reciprocal
algebraic integer, in particular a Salem number,
the power series
\eqref{fbetalyndon} is never a polynomial.
\end{proposition}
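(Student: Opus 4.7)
The proof has four parts; the first three are essentially bookkeeping, and the last is the substantive claim.

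First, I would observe that $f_\beta(1/\beta)=0$ is merely a rewriting of \eqref{renyidef}; since $1<\beta<2$ forces $\lfloor\beta\rfloor=1$ and every $T_\beta^{i-1}(1)\in[0,1)$, each digit $t_i=\lfloor\beta T_\beta^{i-1}(1)\rfloor$ lies in $\{0,1\}$. The Lyndon property of $(t_i)$ is then a standard consequence of Parry's characterization: for $k\geq 1$ the shift $\sigma^k(t_i)$ is the $\beta$-expansion of $T_\beta^k(1)\in[0,1)$, hence lexicographically strictly smaller than $d_\beta(1)$ (directly when $d_\beta(1)$ is infinite, via the periodic companion $(c_i)$ otherwise). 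The explicit shape \eqref{fbetalyndon} with its gap constraints is a direct transcription of Theorem~\ref{zeronzeron}: translating the positions of the $1$'s in $d_\beta(1)=0.1\,0^{n-2}\,1\,0^{n_1}\,1\ldots$ into exponents yields $G_n(z)+\sum z^{m_q}$ with $m_{q+1}-m_q\geq n-1$.

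For the converse, uniqueness of $\beta$ is immediate from the order-preserving injection $\beta\mapsto d_\beta(1)$ of Proposition~\ref{variationbasebeta}. For existence, I would introduce $\varphi(z):=z+z^n+\sum_q z^{m_q}$, which is continuous and strictly increasing on $(0,1)$; the Lyndon hypothesis is precisely what forces $\varphi$ to attain the value $1$ at a unique $z_0\in(\theta_n,\theta_{n-1}]$. Setting $\beta:=1/z_0$, self-admissibility guarantees that the given series is the greedy $\beta$-expansion of $1$ and not a non-greedy representation, and Theorem~\ref{zeronzeron} returns $\dyg(\beta)=n$.

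The substantive obstacle is the final assertion. Suppose for contradiction that $\beta$ is a reciprocal algebraic integer in $(1,(1+\sqrt{5})/2)$ and that $f_\beta(z)=-1+t_1 z+\cdots+t_q z^q$ is a polynomial with integer coefficients. Let $P_\beta\in\mathbb{Z}[X]$ be the monic minimal polynomial of $\beta$. By reciprocity, $1/\beta$ is a Galois conjugate of $\beta$, so $P_\beta$ is also the minimal polynomial of $1/\beta$. Since $f_\beta(1/\beta)=0$, $P_\beta$ divides $f_\beta$ in $\mathbb{Q}[X]$, and, $P_\beta$ being monic, also in $\mathbb{Z}[X]$ by Gauss' lemma. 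Hence every root of $P_\beta$ is a root of $f_\beta$; in particular $f_\beta(\beta)=0$. But $t_1=\lfloor\beta\rfloor=1$ and $t_i\geq 0$, so
$$f_\beta(\beta)\ =\ -1+\beta+\sum_{i=2}^{q}t_i\beta^i\ \geq\ \beta-1\ >\ 0,$$
contradicting $f_\beta(\beta)=0$. The crucial leverage is that reciprocity transports the forced zero at $1/\beta$ over to $\beta$, where it collides with a manifestly positive evaluation — an argument that would break both outside $(1,2)$ (where $t_1$ could exceed $1$) and for non-reciprocal $\beta$.
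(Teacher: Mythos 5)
Your proof is correct and takes essentially the same route as the paper. For the final (substantive) claim, the paper observes that reciprocity forces both $\beta$ and $1/\beta$ to be real zeroes of $f_\beta$ and then invokes Descartes' rule of signs (the coefficient sequence $-1,t_1,t_2,\ldots$ has exactly one sign change, so there is at most one positive real root); you instead replace Descartes' rule by the equivalent but slightly more explicit observation that $f_\beta(\beta)\geq\beta-1>0$, which is the same leverage coming from the same coefficient structure. Two small remarks: in your converse discussion the target interval should read $(\theta_{n-1},\theta_n]$, not $(\theta_n,\theta_{n-1}]$, since $(\theta_m)_{m\geq 2}$ is increasing; and your closing aside is off — the evaluation $f_\beta(\beta)=-1+\sum t_i\beta^i\geq -1+t_1\beta>0$ remains valid for any $\beta>1$ (a larger $t_1$ only helps), so the restriction to $(1,(1+\sqrt 5)/2)$ is needed for the \emph{shape} \eqref{fbetalyndon} of $f_\beta$, not for the positivity argument.
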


\begin{proof}
The expression of 
$f_{\beta}(z)$ readily comes
from Theorem \ref{zeronzeron}.
Let us prove the last claim.
Assume that $\beta$ is a reciprocal
algebraic integer and that 
$f_{\beta}(z)$ is a polynomial. The
polynomial $f_{\beta}(z)$ would vanish at
the two real zeroes
$\beta$ and $1/\beta$. But the sequence
$-1\, t_1\, t_2\, t_3 \ldots$ has only 
one sign change. 
By Descartes's rule 
we obtain a contradiction.
\end{proof}

\begin{definition}
A real number $\beta > 1$ is said to be
a simple Parry number if $d_{\beta}(1)$ is finite
(i.e. if it ends in infinitely many zeroes). 
It is a Parry number if it is simple or if
$d_{\beta}(1)$ is eventually periodic, with a preperiod
of length $\geq 0$ and a nonzero period. 
\end{definition}
Parry \cite{parry} proved that the set of simple Parry numbers is dense in $(1,+\infty)$. 
Salem numbers are never simple Parry numbers.

In number theory, inequalities are
often associated to collections of half-spaces in
euclidean or adelic Geometry of Numbers
(e.g. Minkowski's Theorem, etc).
The conditions of Parry (at $\beta$)
are of totally different nature
since they refer to 
a reasonable control, order-preserving, 
of the gappiness (lacunarity)
of the coefficient vectors of the
power series which are the Parry Upper functions,
when $\beta$ is close to 1.

\begin{theorem}
\label{dynamicalzetatransferoperator}
Let $\beta \in (1,\theta_{2}^{-1})$. Then,
the Artin-Mazur
dynamical zeta function
$\zeta_{\beta}(z)$ defined 
by 
\begin{equation}
\label{dynamicalfunction}
\zeta_{\beta}(z) := \exp\Bigl(
\sum_{n=1}^{\infty} \, 
\frac{\#\{x \in [0,1] \mid 
T_{\beta}^{n}(x) = x\}}{n} \, z^n
\Bigr),
\end{equation}
counting the number of periodic 
points of period dividing $n$, has a sense,
is nonzero and meromorphic
in $\{z: |z| < 1\}$, and such that
$1/\zeta_{\beta}(z)$ is holomorphic
in $\{z: |z| < 1\}$,
\end{theorem}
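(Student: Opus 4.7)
The plan is to reduce the three claims to an explicit closed-form identity expressing $1/\zeta_\beta(z)$ in terms of the Parry Upper function $f_\beta(z)$ of Definition \ref{parryupperfunction}, namely
\[
\frac{1}{\zeta_\beta(z)} \;=\; -f_\beta(z) \;=\; 1 - \sum_{i \geq 1} t_i\, z^i
\]
(up to multiplication by an elementary polynomial factor having no zeros in $\{|z|<1\}$ in the simple Parry case). Granting this identity, all three assertions follow at once: the right-hand side is a power series with coefficients in $\{-1,0,1\}$, so it converges absolutely and uniformly on compact subsets of $\{|z|<1\}$ and defines a holomorphic function there; its constant term equals $1$, so it is not identically zero, making $\zeta_\beta(z)$ well-defined and meromorphic on the open unit disk while $1/\zeta_\beta(z)$ is holomorphic.

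To set up the identity, I would first verify well-definedness and non-vanishing on a smaller disk. Since $\beta<\theta_2^{-1}=(1+\sqrt 5)/2<2$ and $\lfloor\beta\rfloor=1$, the transformation $T_\beta$ is piecewise affine with constant slope $\beta$ on the two intervals cut out by $1/\beta$, so $T_\beta^n$ is piecewise affine with slope $\beta^n$ on at most $O(\beta^n)$ pieces. Counting intersections of its graph with the diagonal gives $\#\{x\in[0,1]:T_\beta^n(x)=x\}\leq C\beta^n$ for some constant $C$, and hence
\[
\sum_{n\geq 1}\frac{\#\{x\in[0,1]:T_\beta^n(x)=x\}}{n}\,z^n
\]
converges absolutely on $\{|z|<1/\beta\}$. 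Thus $\zeta_\beta(z)$ is holomorphic and non-vanishing on that subdisk, with $\zeta_\beta(0)=1$.

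Next, I would count the periodic points combinatorially via the R\'enyi-Parry symbolic coding: periodic orbits of $T_\beta$ are in bijection (up to a uniformly bounded boundary correction at $x=1$) with admissible periodic words in the $\beta$-shift in the sense of the Parry conditions \eqref{conditionsParry}. A classical transfer-operator trace computation (in the spirit of Takahashi, Hofbauer, and Baladi), or equivalently the Fredholm-determinant interpretation of $f_\beta$ recalled after Definition \ref{selfadmissiblepowerseries}, then identifies the generating function of these counts with $-\log(-f_\beta(z))$ on $\{|z|<1/\beta\}$. Exponentiation yields $\zeta_\beta(z)=-1/f_\beta(z)$ on the subdisk, and analytic continuation along the explicit right-hand side extends the identity to $\{|z|<1\}$.

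The main obstacle is the combinatorial/transfer-operator identification that produces the generating function in closed form. One must track admissibility of cyclic shifts across the incomplete top cylinder $[t_1/\beta,1]$, and handle separately the infinite case of $d_\beta(1)$ (where the expansion \eqref{fbetalyndon} of Proposition \ref{fbetainfinie} applies directly) and the simple Parry case (where the correction polynomial arises). The hypothesis $\beta\in(1,\theta_2^{-1})$ is essential: by Theorem \ref{zeronzeron} the digits $t_i$ then lie in $\{0,1\}$ with controlled gappiness, which is what allows both the counting and the boundary analysis to go through cleanly and, simultaneously, forces the coefficient bound on $-f_\beta(z)$ that delivers holomorphy on the full open unit disk.
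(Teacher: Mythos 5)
Your proof is correct in outline, but it takes a genuinely different route from the paper's. The paper proves this theorem by direct citation to Theorem~2 of Baladi and Keller \cite{baladikeller}, which yields meromorphicity of $\zeta_{\beta}$ and holomorphy of $1/\zeta_{\beta}$ on the open unit disk via the general theory of generalized Fredholm determinants of transfer operators for piecewise monotone maps, together with Ruelle's removal of the generating-partition hypothesis \cite{ruelle5}\cite{ruelle8}. The explicit identity $1/\zeta_{\beta}(z)=-f_{\beta}(z)$ (with the $(1-z^{N})$ correction in the simple Parry case) that you take as your starting point is instead the content of Theorem~\ref{parryupperdynamicalzeta}, which the paper proves separately via Flatto--Lagarias--Poonen \cite{flattolagariaspoonen} and Takahashi \cite{takahashi}\cite{itotakahashi}, logically independently of the present theorem. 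In effect you reverse the order of the two theorems: you derive the explicit formula first, then read off all three claims from the coefficient bound $t_{i}\in\{0,1\}$, which makes the conclusion transparent; the paper instead invokes the abstract Baladi--Keller statement here and pins down the explicit formula in the following theorem. Both routes defer the hard combinatorial/trace identification to a citation --- you flag it as the main obstacle and gesture at the Takahashi--Hofbauer--Baladi machinery, the paper points to Baladi--Keller --- so neither is more self-contained, but your route buys the closed-form identity up front. One small caveat: the hypothesis $\beta\in(1,\theta_{2}^{-1})$ is not what makes the meromorphicity argument work (the Baladi--Keller result is valid for general $\beta>1$); it is imposed so that the digit alphabet is $\{0,1\}$, which matters elsewhere in the paper, so your claim that it is ``essential'' for the counting and boundary analysis here is an overstatement.
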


\begin{proof}
Theorem 2 in 
\cite{baladikeller}, 
assuming that
the set of intervals 
$([0, a_1),[a_1 , 1])$
forming the partition of
$[0,1]$ is generating;
In \cite{ruelle5} \cite{ruelle8} Ruelle shows that this assumption is not necessary, showing how to remove this obstruction.
\end{proof}

Theorem \ref{dynamicalzetatransferoperator}
has been 
stated in Baladi and Keller
\cite{baladikeller} under more general
assumptions.
Theorem \ref{dynamicalzetatransferoperator} 
had been previously 
conjectured by Hofbauer and Keller
\cite{hofbauerkeller} 
for piecewise monotone maps, for the case
where the function $g$ occuring in the
transfer operators is piecewise constant.
(cf also Mori \cite{mori} \cite{mori2}).
The case $g=1$ in the transfer operators
was studied by Milnor and Thurston \cite{milnorthurston},
Hofbauer \cite{hofbauer}.

The relations between 
the poles of the dynamical
zeta function $\zeta_{\beta}(z)$ and
the zeroes of the Parry Upper function
$f_{\beta}(z)$ 
come from Theorem \ref{dynamicalzetatransferoperator}
and from the following theorem.

\begin{theorem}
\label{parryupperdynamicalzeta}
Let $\beta \in (1,\theta_{2}^{-1})$.
Then the Parry Upper function
$f_{\beta}(z)$ satisfies
\begin{equation}
\label{parryupperdynamicalzeta_i}
(i)\qquad f_{\beta}(z) = - \frac{1}{\zeta_{\beta}(z)} 
\qquad \mbox{if}~ \beta ~\mbox{is not a simple Parry number},
\end{equation}
and
\begin{equation}
\label{parryupperdynamicalzeta_ii}
(ii)\quad
f_{\beta}(z) = - \frac{1 - z^N}{\zeta_{\beta}(z)}
\qquad
\mbox{if $\beta$ is a simple Parry number}
\end{equation}
where $N$, 
which depends upon $\beta$, is the minimal positive integer such that $T_{\beta}^{N}(1) = 0$.
It is holomorphic in the open unit disk
$\{z: |z| < 1\}$.
It has no zero in
$\{z: |z| \leq 1/\beta\}$ except $z=1/\beta$
which is a simple zero.
\end{theorem}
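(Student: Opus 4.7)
The plan is to derive (i) and (ii) by computing $\zeta_\beta(z)$ directly from the combinatorics of periodic orbits of $T_\beta$, then obtain holomorphy from Theorem \ref{dynamicalzetatransferoperator}, and finally analyse the zero set of $f_\beta$ in the closed disk $\{|z|\leq 1/\beta\}$.

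For each $n\geq 1$, a point $x\in[0,1]$ satisfies $T_\beta^n(x)=x$ if and only if its greedy $\beta$-expansion is purely periodic with period dividing $n$, every shift being $<_{lex}(c_1,c_2,\ldots)$. Writing $P_n$ for the number of such $x$, the generating series $\log\zeta_\beta(z)=\sum_{n\geq 1}(P_n/n)z^n$ is evaluated by the classical Takahashi--Hofbauer argument underlying Theorem \ref{dynamicalzetatransferoperator}. When $d_\beta(1)$ is infinite one has $c_i=t_i$, and the admissibility rules yield $1/\zeta_\beta(z)=1-\sum_{i\geq 1}t_i z^i=-f_\beta(z)$; when $d_\beta(1)=0.t_1\cdots t_N$ is finite, the periodic extension $(t_1\cdots t_{N-1}(t_N-1))^{\omega}$ that defines the $c_i$ contributes a factor $1-z^N$, giving
$$1/\zeta_\beta(z)\;=\;\frac{1-t_1z-\cdots-t_Nz^N}{1-z^N}\;=\;\frac{-f_\beta(z)}{1-z^N}.$$
Rearranging produces (i) and (ii). Holomorphy on the open unit disk is then immediate: in case (ii) $f_\beta$ is a polynomial, while in case (i) the coefficients $t_i\in\{0,1\}$ make $-1+\sum t_i z^i$ converge absolutely on $\{|z|<1\}$, consistent with Theorem \ref{dynamicalzetatransferoperator} which already asserts the holomorphy of $1/\zeta_\beta$ there.

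For the zero set on $\{|z|\leq 1/\beta\}$, the defining identity $1=\sum_{i\geq 1}t_i\beta^{-i}$ of $d_\beta(1)$ gives $f_\beta(1/\beta)=0$; since $t_1=\lfloor\beta\rfloor=1$, termwise differentiation yields $f_\beta'(1/\beta)=\sum_{i\geq 1}i\,t_i\beta^{-(i-1)}\geq 1>0$, so this zero is simple. For $|z|<1/\beta$,
$$\Big|\sum_{i\geq 1}t_i z^i\Big|\;\leq\;\sum_{i\geq 1}t_i|z|^i\;<\;\sum_{i\geq 1}t_i\beta^{-i}\;=\;1,$$
forcing $f_\beta(z)\neq 0$. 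On the circle $|z|=1/\beta$, vanishing of $f_\beta$ would mean $\sum t_i z^i=1$ (a positive real of modulus one), which by the equality case of the triangle inequality requires every nonzero $t_i z^i$ to be positive real; combined with $t_1=1$, this forces $z$ itself to be positive real, i.e.\ $z=1/\beta$.

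The substantive step is the closed-form evaluation of $\zeta_\beta(z)$: admissibility has to be imposed on every cyclic shift of a candidate period, and the simple-Parry subcase requires careful bookkeeping to recognise that the finite orbit of $1$ under $T_\beta$ (ending at $0$ after $N$ steps) contributes exactly the factor $1-z^N$. Once this closed form is in hand, the holomorphy statement and the zero analysis of $f_\beta$ follow from the elementary estimates above.
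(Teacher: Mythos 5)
Your overall route is the same as the paper's for the identities (i) and (ii): the paper proves them by citing Flatto--Lagarias--Poonen, Flatto--Lagarias, and the Takahashi / Ito--Takahashi formula $\zeta_{\beta}(z) = (1-z^N)\big/\big((1-\beta z)\sum_{n\geq 0} T_{\beta}^{n}(1) z^n\big)$, then observing the short recurrence $\beta T_{\beta}^n(1) = t_{n+1} + T_{\beta}^{n+1}(1)$ to recognise $f_{\beta}(z) = -(1-\beta z)\sum_{n} T_{\beta}^n(1)z^n$. Your plan of evaluating $\log \zeta_{\beta}$ directly from periodic orbit counts is the same machinery in a different guise. However, the sentence ``the admissibility rules yield $1/\zeta_{\beta}(z)=1-\sum t_i z^i$'' is where the entire weight of (i)/(ii) is carried, and you do not execute it; you honestly flag this, but as written the argument is a sketch leaning on the same literature the paper cites. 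So for (i)/(ii) there is no genuine gap relative to the paper (which also defers to references), but there is no advantage either: you would need to actually carry out the cyclic-shift admissibility bookkeeping (or simply reproduce the Takahashi identity and the one-line recurrence) to have a complete proof.

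Where your proposal genuinely departs, and improves in self-containedness, is the zero analysis on $\{|z|\leq 1/\beta\}$. The paper cites Lemmas 5.2--5.4 of Flatto--Lagarias--Poonen, whereas you give a direct elementary argument: the strict inequality $\big|\sum t_i z^i\big| < \sum t_i \beta^{-i} = 1$ for $|z|<1/\beta$ (using $t_1=1>0$ so the sum is strictly increasing in $|z|$); the equality case of the triangle inequality on $|z|=1/\beta$, which forces each nonzero $t_i z^i$ to be a nonnegative real and, because $t_1=1$, forces $z=1/\beta$; and simplicity of that zero from $f_{\beta}'(1/\beta) = \sum i\,t_i\beta^{-(i-1)} \geq 1$. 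All three steps are correct (note $\beta<\theta_2^{-1}<2$ ensures $t_1=\lfloor\beta\rfloor=1$). This is a clean, short replacement for the cited lemmas and is the most valuable part of your write-up.
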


\begin{proof}
Theorem 2.3 and Appendix A
in Flatto, Lagarias and Poonen 
\cite{flattolagariaspoonen}; 
Theorem 1.2 in
Flatto and Lagarias 
\cite{flattolagarias}, I;
Theorem 3.2
in Lagarias \cite{lagarias}.
From Takahashi \cite{takahashi},
Ito and Takahashi
\cite{itotakahashi}, these authors
deduce 
\begin{equation}
\label{zetafunctionfraction}
\zeta_{\beta}(z) = \frac{1 - z^N}{(1 - \beta z)
\Bigl(
\sum_{n=0}^{\infty}T_{\beta}^{n}(1) \, z^n
\Bigr)}
\end{equation}
where ``$z^N$" has to be replaced by ``$0$"
if $\beta$ is not a simple Parry number.
Since $\beta T_{\beta}^{n}(1) =
\lfloor \beta T_{\beta}^{n}(1)
\rfloor +
\{\beta T_{\beta}^{n}(1)\} = 
t_{n+1} + T_{\beta}^{n+1}(1)$
by \eqref{digits__ti}, for $n \geq 1$,
expanding the power series of the denominator
\eqref{zetafunctionfraction} readily gives:
\begin{equation}
\label{fbetazetabeta}
-1 +t_1 z + t_2 z^2 +\ldots
= f_{\beta}(z) 
= 
-(1 - \beta z)
\Bigl(
\sum_{n=0}^{\infty}T_{\beta}^{n}(1) \, z^n
\Bigr).
\end{equation}
The zeroes of smallest modulus are
characterized in Lemma 5.2, Lemma 5.3 and Lemma 5.4
in \cite{flattolagariaspoonen}. 
\end{proof}

Let $\beta \in (1,\theta_{2}^{-1})$.
Since the power series $f_{\beta}(z)$ has coefficients
in the finite set
$\mathcal {A} \cup \{-1\}$, its radius of convergence
is $\geq 1$ by Hadamard's formula, and it obeys
the Carlson-Polya dichotomy
(Bell and Chen \cite{bellchen},
Bell, Miles and Ward
\cite{bellmilesward},
Carlson \cite{carlson}, 
P\'olya \cite{polya}, 
Szeg\H{o} \cite{szego}).
Applying the Carlson-Polya dichotomy
gives the following
equivalence.

\begin{theorem}
\label{carlsonpolya}
Let $\beta \in (1,\theta_{2}^{-1})$.
The real number $\beta$
is a Parry number if and only if
the Parry Upper function
$f_{\beta}(z)$ is a rational function, 
equivalently if and only
if $\zeta_{\beta}(z)$ is a rational function.

Moreover, the set of 
nonParry numbers 
in $(1, \infty)$ is not empty.
If $\beta$ is not a Parry number, 
then the unit circle
$\{z: |z| = 1\}$ is the natural boundary
of  
$f_{\beta}(z)$, equivalently
of $\zeta_{\beta}(z)$. 
\end{theorem}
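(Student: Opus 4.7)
The plan is to apply the Carlson--Polya dichotomy to the power series $f_{\beta}(z) = -1 + \sum_{i\geq 1} t_i z^i$, whose coefficients lie in the finite set $\{-1,0,1\}$. First I would establish the radius of convergence. If $(t_i)_{i\geq 1}$ is eventually zero, then $f_{\beta}$ is a polynomial and there is nothing to prove; otherwise infinitely many coefficients equal $1$, and Hadamard's formula together with the boundedness of the coefficients gives a radius of convergence equal to $1$. Since $f_{\beta}(1/\beta)=0$ with $1/\beta < 1$, the radius cannot exceed $1$ in the nontrivial case either. Carlson--Polya therefore applies: $f_{\beta}(z)$ is either a rational function or the unit circle $\{|z|=1\}$ is its natural boundary.

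Next I would translate the conclusion into the dictionary between $f_{\beta}$, $\zeta_{\beta}$ and $d_{\beta}(1)$. By Theorem \ref{parryupperdynamicalzeta}, one has either $f_{\beta}(z) = -1/\zeta_{\beta}(z)$ or $f_{\beta}(z) = -(1-z^N)/\zeta_{\beta}(z)$, so rationality of $f_{\beta}$ is equivalent to rationality of $\zeta_{\beta}$. By definition, $\beta$ is a Parry number iff $d_{\beta}(1)$ is either finite or eventually periodic, which is in turn equivalent to the coefficient sequence $(t_i)$ being ultimately periodic, hence equivalent to $f_{\beta}$ being rational. Combining these equivalences yields the first statement.

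For the non-emptiness claim, I would use a cardinality argument: a Parry number is by construction determined by its finite or eventually periodic $\beta$-expansion of $1$, so Parry numbers form a countable subset of $(1,\infty)$. Since $(1,\infty)$ is uncountable, non-Parry numbers exist (in abundance). For the last assertion, if $\beta$ is not a Parry number then $f_{\beta}(z)$ is not rational by the equivalences already established, and the Carlson--Polya dichotomy forces $\{|z|=1\}$ to be the natural boundary of $f_{\beta}(z)$; the identity with $\zeta_{\beta}$ transfers the same natural boundary to $\zeta_{\beta}(z)$ (the factor $1-z^N$ in the simple Parry case is irrelevant here since that case does not arise).

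The proof is largely mechanical once Carlson--Polya is invoked; the only delicate point is checking that the radius of convergence is exactly $1$ so that the dichotomy applies in its standard form, which I would handle by the explicit location of the zero $z=1/\beta$ already recorded in Theorem \ref{parryupperdynamicalzeta}. I expect no serious obstacle, since the hard analytic content is packaged in Carlson--Polya and in the explicit factorization of $\zeta_{\beta}$ in terms of $f_{\beta}$.
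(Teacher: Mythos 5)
Your main argument agrees with the paper's: the paper also derives the dichotomy from Carlson--P\'olya applied to the power series $f_{\beta}(z)$ with coefficients in a finite set, deferring the details to \cite{vergergaugry6}, and then translates between $f_{\beta}$ and $\zeta_{\beta}$ via Theorem~\ref{parryupperdynamicalzeta}. Where you genuinely diverge is in the proof that non-Parry numbers exist. You argue by countability: Parry numbers in each interval $(1,\theta_2^{-1})$, and more generally in each $(n,n+1)$, are parametrized by finite or eventually periodic sequences over a finite alphabet, hence form a countable set, while $(1,\infty)$ is uncountable. The paper instead invokes Fekete--Szeg\H{o}'s theorem, using the fact that the radius of convergence of $f_{\beta}$ is always~$1$ and the open unit disk has transfinite diameter~$1$. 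Your cardinality argument is more elementary, self-contained, and in fact gives a stronger conclusion (non-Parry numbers are uncountable and of full measure). The paper's route ties the statement to potential theory and the arithmetic of algebraic integers, consistent with its other uses of Fekete--Szeg\H{o} (e.g.\ in Section~\ref{S3}), but is heavier machinery for this particular claim.

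One small slip: you write that because $f_{\beta}(1/\beta)=0$ with $1/\beta<1$, ``the radius cannot exceed $1$ in the nontrivial case either.'' A zero lying strictly inside the disk of convergence places no constraint on the radius of convergence, so this sentence is a non sequitur. The correct justification is the one you already give: coefficients bounded in modulus by~$1$ give $R\geq 1$ via Hadamard, and infinitely many coefficients equal to~$1$ (which holds since $f_{\beta}$ is never a polynomial when $\beta$ is reciprocal, by Proposition~\ref{fbetainfinie}, and more generally whenever $d_\beta(1)$ is infinite) give $R\leq 1$, so $R=1$. Delete the sentence about the zero; the rest of your radius argument, and the remainder of the proof, is sound. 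You should also note explicitly, as you implicitly rely on it, that rationality of a power series with coefficients in a finite subset of $\ZZ$ forces the coefficient sequence to be eventually periodic (the poles of such a rational function are necessarily roots of unity); this is part of the Carlson--P\'olya/Szeg\H{o} circle of ideas and is what closes the loop from ``$f_\beta$ rational'' back to ``$d_\beta(1)$ eventually periodic.''
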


\begin{proof}
\cite{vergergaugry6}.
The set of nonParry numbers $\beta$
in $(1, \infty)$ is not empty
as a consequence of Fekete-Szeg\H{o}'s Theorem 
\cite{feketeszego} 
since the radius of convergence
of $f_{\beta}(z)$ is equal to 1 in any case,
and that its domain of definition always contains
the open unit disk which has a transfinite
diameter equal to 1.
Since the set of Parry numbers $\beta$
in $(1, \infty)$ is nonempty,
the dichotomy between the set of
Parry numbers 
and the set of nonParry numbers, both non empty, 
in $(1,\infty)$ has a sense.
\end{proof}

Recall that the set of simple Parry numbers is dense \cite{parry} in $(1,+\infty)$ and that
the set of Parry numbers which are not simple
is infinite since it contains intinitely many 
Pisot numbers by Schmidt \cite{schmidt}, 
Bertrand-Mathis \cite{bertrandmathis}.

\vspace{0.1cm}

\section{Dynamics of the small Salem numbers known}
\label{S3}

The set $\pb$ of Perron numbers is dense
in $[1,+\infty)$. It
contains the subset $\pb_P$ of the
Parry numbers by a result of Lind 
\cite{lind} (Blanchard \cite{blanchard},
Boyle \cite{boyle}, 
Denker, Grillenberger and Sigmund 
\cite{denkergrillenbergersigmund}, 
Frougny in 
\cite{lothaire} chap.7). 
The set $\pb \setminus \pb_P$
is not empty (Akiyama \cite{akiyama}, and as a
consequence of Fekete-Szeg\H{o}'s Theorem 
\cite{feketeszego});
it would contain 
all Salem numbers of large degrees,
by Thurston \cite{thurston2} p. 11. 
Parry (\cite{parry}, Theorem 5) proved that
the subcollection 
of simple Parry numbers is dense
in $[1, \infty)$.

\medskip

In the opposite direction 
a Conjecture of K. Schmidt \cite{schmidt}
asserts that Salem numbers are all Parry 
numbers. 
For Salem numbers $\beta$
of degree $\geq 6$,
Boyd 
\cite{boyd15} established
a simple probabilistic model, based on 
the frequencies of digits
occurring in the R\'enyi $\beta$-expansions of unity,
to conjecture that, more realistically,
Salem numbers are dispatched into 
the two sets of Parry numbers and nonParry numbers,
each of them with densities $> 0$. 
This model, coherent with Thurston's one 
(\cite{thurston2}, p. 11),
is in contradiction with the conjecture of 
K. Schmidt.

\medskip

This dichotomy of Salem numbers
was verified
by Hichri \cite{hichri} 
\cite{hichri2} \cite{hichri3} 
for Salem numbers of 
degree 8. Hichri 
further developped the heuristic approach of Boyd
for Salem numbers of degree 8. 
The Salem numbers of degree $\leq 8$ are all greater
than $1.280638\ldots$ from 
\cite{mossinghofflist}.
Salem numbers of degree 4
are Parry numbers \cite{boyd13} \cite{boyd14}.
Boyd's model covers the set of
Salem numbers smaller than 
Lehmer's number, if any.

The small Salem numbers found 
by Lehmer in \cite{lehmer},
reported in Table 2,
either given 
by their minimal polynomial or 
equivalently by their $\beta$-expansion,
are Parry numbers whose 
R\'enyi $\beta$-expansion of unity
has a preperiod length equal to 1.
Table 1 gives the subcollection of those
Salem numbers $\beta$ which are Parry numbers, of
degree $\leq 44$,
within the intervals of extremities
the Perron numbers 
$\theta_{n}^{-1}, \,n = 5, 6, \ldots, 12$.
In each interval 
$(\theta_{n}^{-1}, \theta_{n-1}^{-1})$ the 
Salem numbers $\beta$ have 
the same dynamical degree $\dyg(\beta)$,
while a certain disparity
of the degrees $\deg(\beta)$ occurs.
The remaining Salem numbers in \cite{mossinghofflist}
are very probably nonParry numbers 
though proofs are not available yet; they are not included
in Table 1. Apart from them,
the other Salem numbers which exist in the intervals
$(\theta_{n}^{-1}, \theta_{n-1}^{-1}), 
\,n \geq 6$,
if any, should be of degrees
$ > 180$.

{\small
\begin{center}
\begin{tabular}{c|c|c|c|c}
$\dyg$ & $\deg$ & $\beta$ & $P_{\beta,P}$ & $d_{\beta}(1)$\\
\hline
\\
5 & 3 & $\theta_{5}^{-1}=1.324717$ & 5 & \qquad 
\qquad $0.1 0^{3} 1$
\quad \mbox{smallest Pisot number - Siegel}\\
\hline
\\
6 & 18 & $1.29567$ & 22 & $0.1(0^4 1 0^9 1 0^6)^{\omega}$\\
6 & 10 & $1.293485$ & 12 & $0.1(0^4 1 0^6)^{\omega}$\\
6 & 24 & $1.291741$ & 24 irr. & $0.1(0^4 1 0^{11} 1 0^6)^{\omega}$ \\
6 & 26 & $1.286730$ & 30 & $0.1(0^4 1 0^{17} 1 0^6)^{\omega}$ \\
6 & 34 & $1.285409$ & 38 & $0.1(0^4 1 0^{25} 1 0^6)^{\omega}$ \\
6 & 30 & $1.285235$ & 45 & $0.1(0^4 1 0^{32} 1 0^6)^{\omega}$ \\
6 & 44 & $1.285199$ & 66 & $0.1(0^4 1 0^{54} 1 0^6)^{\omega}$ \\
6 & 6 & $\theta_{6}^{-1}=1.285199$ & 6 irr.& 
\qquad $0.1 0^{4} 1$
\qquad \qquad Perron number\\
\hline
\\
7 & 26 & $1.285196$ & 44 & $
0.1(0^5 1 0^5 1 0^5 1 0^5 1 0^5 1 0^{5} 1 0^7)^{\omega}$ \\
7 & 26 & $1.281691$ & .. & $
0.1(0^5 1 0^5 1 0^9 1 0^5 1 0^{17} 1 0^{7} 1 0^6 1 0^6 1 0^7 1 0^{12})^{\omega}$ \\
7 & 8 & $1.280638$ & 20 & $
0.1(0^5 1 0^5 1 0^7)^{\omega}$ \\
7 & 10 & $1.261230$ & 14 & $
0.1(0^5 1 0^7)^{\omega}$ \\
7 & 24 & $1.260103$ & 28 & $
0.1(0^5 1 0^{13} 1 0^7)^{\omega}$ \\
7 & 18 & $1.256221$ & 36 & $
0.1(0^5 1 0^{21} 1 0^7)^{\omega}$ \\
7 & 7 & $\theta_{7}^{-1}=1.255422$ & 7 irr.& 
\qquad \quad $0.1 0^{5} 1$
\qquad \qquad Perron number\\
\hline\\
8 & 18 & $1.252775$ & 120 & $
0.1(0^6 1 0^{6} 1 0^{10} 1 0^{16} 1 0^{12} 1 0^7 1 0^{12} 0^{16} 1 0^{10}1 0^6 1 0^8)^{\omega}$ \\
8 & 12 & $1.240726$ & 48 & $
0.1(0^6 1 0^{11} 1 0^7 1 0^{11} 1 0^8)^{\omega}$ \\
8 & 20 & $1.232613$ & 41 & $
0.1(0^6 1 0^{24} 1 0^8)^{\omega}$ \\
8 & 8 & $\theta_{8}^{-1}=1.232054$ & 8 irr.& 
\qquad $0.1 0^{6} 1$
\qquad \qquad Perron number\\
\hline\\
9 & 10 & $1.216391$ & 18 & $
0.1(0^7 1 0^9)^{\omega}$ \\
9 & 9 & $\theta_{9}^{-1}=1.213149$ & 9 irr.& 
\qquad $0.1 0^{7} 1$
\qquad \qquad Perron number\\
\hline\\
10 & 14 & $1.200026$ & 20 & $
0.1(0^8 1 0^{10})^{\omega}$ \\
10 & 10 & $\theta_{10}^{-1}=1.197491$ & 10 irr.& 
\qquad $0.1 0^{8} 1$ \qquad \qquad Perron number\\
\hline\\
11 & 9 & $\theta_{11}^{-1}=1.184276$ & 11 & 
\qquad
$0.1 0^{9} 1$
\qquad \qquad Perron number
\\
\hline\\
12 & 10 & $1.176280$ & 75 & Lehmer's number $:
0.1(0^{10} 1 0^{18} 1 0^{12} 1 0^{18} 1 0^{12})^{\omega}$ \\
12 & 12 & $\theta_{12}^{-1}=1.172950$ & 12 irr.& 
\qquad $0.1 0^{10} 1$
\qquad \qquad Perron number\\
\hline
\end{tabular}

\end{center}
}

\begin{center}
Table 1.~ Smallest Salem numbers 
$\beta < 1.3$ of degree $\leq 44$, which are Parry numbers, computed from the ``list of Mossinghoff" 
\cite{mossinghofflist}
of irreducible monic
integer polynomials of degree $\leq 180$.
 In Column 1 is reported 
the dynamical degree of $\beta$. 
Column 4 gives the degree 
$d_P$ of the Parry polynomial
$P_{\beta,P}$ of $\beta$; $P_{\beta,P}$ is reducible except if ``irr." is mentioned.
\end{center}

The Salem numbers $\beta$,
$\theta_{12}^{-1} < \beta < \theta_{5}^{-1}$, in
Table 1,
of degree $\leq 44$, are Parry numbers.
The Parry polynomial 
$P_{\beta,P}(X)$ of a Salem number $\beta$ is 
determined by the equation
$$f_{\beta}(z) = \frac{- P^{*}_{\beta,P}(z)}{1-z ^{p+1}}  
\qquad
\mbox{if}~ 
d_{\beta}(1) = 0.t_1 t_2 \ldots t_m (t_{m+1}t_{m+2}
\ldots t _{m+p+1})^{\omega} ~\mbox{has period length}
~ p+1 \geq 1.
$$

\vspace{0.2cm}
\noindent
\begin{center}
\begin{tabular}{c|c|c|l}
\label{smallSalemexpansions}
$\deg(\beta)$ & 
$\beta $
&
minimal polynomial of $\beta$
& 
$d_{\beta}(1)$\\
\hline
& & &  \\
4 & 
$1.722\ldots$
&
$X^4 - X^3 - X^2 -X +1$
&
$0 . 1 (1 0 0)^{\omega}$\\
6 & 
$1.401\ldots$
&
$X^6 - X^4 - X^3 -X^2 +1$
&
$0 . 1 ( 0^2 1 0^4)^{\omega}$\\
8 & 
$1.2806\ldots$
&
$X^8 - X^5 - X^4 -X^3 +1$
&
$0 . 1 (0^5 1 0^5 1 0^{7})^{\omega}$\\
10 & 
$1.17628\ldots$
&
$X^{10} + X^9 - X^7 - X^6 - X^5 $
&
$0 . 1 (0^{10} 1 0^{18} 1 0^{12} 1 0^{18} 1 0^{12})^{\omega}$\\
 & &
 $- X^4 -X^3 + X + 1$
 & \\

\end{tabular}
\end{center}
\begin{center}
Table 2.~ The small Salem numbers, with their minimal polynomial,
discovered by Lehmer in \cite{lehmer}.
\end{center}

\vspace{0.3cm}

Using the ``Construction of Salem'',
Hare and Tweedle \cite{haretweedle}
obtain convergent families of Salem numbers, 
all Parry numbers, having as limit points
the limit points of
 the set S of Pisot numbers 
 in the interval $(1, 2)$ (characterized by
 Amara \cite{amara}).
These families of Salem numbers which are Parry numbers 
do not contain Salem numbers
smaller than 
Lehmer's number.

The relations between the digits $(t_i)$
in the R\'enyi $\beta$-expansion of 
unity $d_{\beta}(1)$
of a Salem number $\beta$
and the coefficient vector
of its minimal polynomial are 
obscure in general. However
we have the following general result
on the gappiness, which is never ``extreme". 

\begin{theorem}
\label{lacunarityVG06}
Let $\beta > 1$ be a Salem number.
We assume that $d_{\beta}(1)$ is
gappy in the sense that there exist
two infinite sequences
$\{m_n\}_{n \geq 1}$ 
and $\{s_n\}_{n \geq 0}$
such that
$$1 = s_0 \leq m_1 < s_1 \leq m_2 < s_2 \leq \ldots
\leq m_n < s_n \leq m_{n+1}
< s_{n+1} \leq \ldots$$
with $(s_n - m_n) \geq 2$, $t_{m_n} \neq 0$,
$t_{s_n} \neq 0$
and $t_i = 0$
if
$m_n < i < s_n$ for all $n \geq 1$. 
Then
\begin{equation}
\label{gappiness}
\limsup_{n \to +\infty} \frac{s_n}{m_n} = 1
\end{equation}
\end{theorem}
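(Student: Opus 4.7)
The plan is to argue by contradiction. Suppose $\limsup_{n \to \infty} s_n/m_n > 1$; then there exist $\delta > 0$ and an infinite set of indices $n$ along which $s_n > (1+\delta)\, m_n$. For each such $n$ I truncate the Parry Upper function at level $m_n$: setting
\[
g_n(z) \,:=\, -1 + \sum_{i=1}^{m_n} t_i\, z^i,
\]
the gappiness hypothesis $t_i = 0$ for $m_n < i < s_n$ gives the clean splitting $f_\beta(z) = g_n(z) + \sum_{i \geq s_n} t_i\, z^i$. Evaluating at $z = 1/\beta$ and using $f_\beta(1/\beta) = 0$ from Proposition \ref{fbetainfinie}, a geometric-series estimate yields
\[
|g_n(1/\beta)| \,\leq\, \sum_{i \geq s_n} \beta^{-i} \,\leq\, \frac{\beta^{1-s_n}}{\beta - 1}.
\]

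Next I form the reciprocal polynomial $R_n(X) := X^{m_n} g_n(1/X) = -X^{m_n} + t_1 X^{m_n-1} + \cdots + t_{m_n}$, an integer polynomial with coefficients in $\{-1,0,1\}$ of degree $m_n$. Then $R_n(\beta) \in \mathbb{Z}[\beta] \subset \mathcal{O}_{\mathbb{Q}(\beta)}$, and the previous bound gives
\[
|R_n(\beta)| \,=\, \beta^{m_n}|g_n(1/\beta)| \,\leq\, \frac{\beta^{m_n + 1 - s_n}}{\beta - 1} \,\leq\, \frac{\beta^{1 - \delta m_n}}{\beta - 1},
\]
which decays exponentially along the subsequence.

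The crux — and the only place the Salem hypothesis is used in a serious way — is the nonvanishing $R_n(\beta) \neq 0$. Indeed, if $R_n(\beta) = 0$ then $g_n(1/\beta) = 0$, so combined with $f_\beta(1/\beta) = 0$ and the nonnegativity of the $t_i$ one would get $t_i = 0$ for all $i \geq s_n$; this forces $d_\beta(1)$ to be finite, making $\beta$ a simple Parry number, which is impossible for a Salem number (as recalled at the beginning of Section \ref{S3}, since the minimal polynomial is reciprocal). Consequently $N_{\mathbb{Q}(\beta)/\mathbb{Q}}(R_n(\beta))$ is a nonzero rational integer and $|N(R_n(\beta))| \geq 1$.

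To finish I estimate $|N(R_n(\beta))|$ using the Salem spectrum: the Galois conjugates of $\beta$ are $\beta$, $1/\beta$ and the remaining $m-2$ roots on the unit circle, where $m = \deg(\beta)$. At the reciprocal conjugate, the direct computation $|R_n(1/\beta)| = \beta^{-m_n}|g_n(\beta)| \leq \beta/(\beta - 1)$ holds; at each unit-modulus conjugate $\alpha$, the fact that the coefficients of $R_n$ lie in $\{-1,0,1\}$ yields $|R_n(\alpha)| \leq m_n + 1$. Multiplying the conjugate evaluations,
\[
1 \,\leq\, |N(R_n(\beta))| \,\leq\, \frac{\beta^{1 - \delta m_n}}{(\beta - 1)^2} \,(m_n + 1)^{m-2},
\]
and the right-hand side tends to $0$ as $m_n \to \infty$, a contradiction. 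The main obstacle is exactly the Salem-only nonvanishing step $R_n(\beta)\neq 0$: without it, a hypothetical finite expansion would short-circuit the norm estimate. The rest is a Liouville-style bookkeeping tailored to the fact that all conjugates of a Salem number other than $\beta^{\pm 1}$ have modulus $1$.
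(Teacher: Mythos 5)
Your proof is correct, but it takes a genuinely different route from the paper: the paper's proof is a two-line deferral, splitting into the cases where $\beta$ is a Parry number (where the eventual periodicity of $d_\beta(1)$ bounds the gaps outright) and where $\beta$ is a non-Parry algebraic integer (where it invokes Theorem 1.1 of Verger-Gaugry's 2006 Annales Fourier paper, or alternatively Theorem 2 of Adamczewski--Bugeaud, combined with the observation ${\rm M}(\beta)=\beta$). What you have done is reconstruct, self-containedly and specialized to Salem numbers, the Liouville-style norm argument that underlies those cited theorems: truncate the Parry Upper function, form the integer polynomial $R_n$, bound $|R_n(\beta)|$ exponentially in $-\delta m_n$ via the gap, bound the remaining conjugate evaluations using the Salem spectrum (one conjugate at $1/\beta$, the rest on $|z|=1$), and contradict $|N(R_n(\beta))|\geq 1$. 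This buys transparency and independence from external references; the paper's route buys brevity and generality, since the cited results cover arbitrary algebraic $\beta$ with a bound depending on ${\rm M}(\beta)/\beta$.

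Two small remarks. First, your nonvanishing step $R_n(\beta)\neq 0$ can be obtained more directly: the hypothesis $t_{s_n}\neq 0$ already forces $g_n(1/\beta)=-\sum_{i\geq s_n}t_i\beta^{-i}<0$, so you need not route through the fact that Salem numbers are not simple Parry numbers (though that detour is also valid). Consequently, the Salem hypothesis is not essential at the nonvanishing step; where it is used seriously is in the norm estimate, where the Salem spectrum ($|\alpha|\leq 1$ for all conjugates $\alpha\neq\beta$, equivalently ${\rm M}(\beta)=\beta$) is what keeps the product of conjugate evaluations polynomially bounded. Second, your final display has a harmless slip: the factors $\beta^{1-\delta m_n}/(\beta-1)$ and $\beta/(\beta-1)$ multiply to $\beta^{2-\delta m_n}/(\beta-1)^2$, not $\beta^{1-\delta m_n}/(\beta-1)^2$; the conclusion is unaffected since the exponential decay still dominates $(m_n+1)^{m-2}$.
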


\begin{proof}
There are two cases: (i) 
$\beta$ is a Parry number: we readily obtain
the result.
(ii) $\beta$ is not a Parry number: then 
${\rm M}(\beta) = \beta$, where M denotes the Mahler measure of an
algebraic number. Then we apply
Theorem 1.1 in
\cite{vergergaugry}, and deduce the result.
Another way is to obtain (ii) as a consequence of 
Theorem 2 in \cite{adamczewskibugeaud}.
\end{proof}

\section{Proof of the Main Theorem \ref{mainpetitSALEM}}
\label{S4} 

For proving Theorem \ref{mainpetitSALEM},
we assume the existence of a Salem number
$\beta$ in an interval
$(\theta_{n}^{-1},\theta_{n-1}^{-1})$, 
$n \geq 32$, and arrive at a contradiction.

\subsection{A non-real lenticular pole of $\zeta_{\beta}(z)$ in a small angular sector}
\label{S4.1}

This Salem number $\beta \in
(\theta_{n}^{-1},\theta_{n-1}^{-1})$, 
$n \geq 32$, would admit 
a R\'enyi $\beta$-expansion of unity
$d_{\beta}(1)$
which is infinite, by Proposition
\ref{fbetainfinie}:
$$d_{\beta}(1) = 
0.1 0^{n-2} 1 0^{n_1} 1 0^{n_2} 1 0^{n_3} \ldots,
$$
with $n_k \geq n-2$ for all $k \geq 1$.
The gappiness is controlled by the integer $n-2$, 
in the sense that any string of zeroes
has length $\geq n-2$, and is never asymptotically
important by Theorem \ref{lacunarityVG06}.
Then
the Parry Upper function $f_{\beta}(z)$ at $\beta$
takes the following form and
is 
characterized by the sequence 
of exponents $(m_q)_{q \geq 0}$:
\begin{equation}
\label{fbet}
f_{\beta}(z) = -1 + z + z^n + z^{m_1} + z^{m_2}
+ z^{m_3} + \ldots = G_{n}(z) +
\sum_{q \geq 1} z^{m_q} ,
\end{equation}
where $m_0 := n$, with the fundamental
minimal gappiness condition:
\begin{equation}
\label{minimalgappiness}
m_{q+1} - m_q \geq n-1 \qquad 
\mbox{for all}~ q \geq 0.
\end{equation}

\medskip

Inside the open unit disk,
the zeroes of $f_{\beta}(z)$ 
are exactly the poles of $\zeta_{\beta}(z)$,
with the same multiplicities,
by Theorem
\ref{parryupperdynamicalzeta}.
For convenience, we will deal with
the zeroes of $f_{\beta}(z)$ instead of the poles of
$\zeta_{\beta}(z)$, in the sequel. 
The terminology
``lenticular poles", in the title of this section,
comes from the fact
that the first zero (cf the definition below)
naturally belongs to a lenticulus of
zeroes, as shown in Figure \ref{example649}, and the 
existence of lenticuli is
the case in general \cite{vergergaugryPROOF}.

\bigskip

The method which will be used
to
detect a non-real zero of
$f_{\beta}(z)$ in the open unit disk
is the method of  Rouch\'e
applied to \eqref{fbet} and to the first zero of
$G_{n}(z)$, in which
$\sum_{q \geq 1} z^{m_q}$ is a perturbation of 
$G_{n}(z) = -1+z+z^n$.
Let us first recall the notations
of the zeroes of the trinomials
$G_n$ and their geometry, from
the factorization of $G_{n}(X) := -1 +X +X^n$ 
by Selmer \cite{selmer} (cf 
\cite{vergergaugry6} Section 2).

\

\subsubsection{Roots of $G_n$ and the asymptotic expansion of the first root}
\label{Gnroots}

Let $n \geq 2$.
Summing in pairs over complex conjugated imaginary roots, the
indexation of the roots and
the factorization of $G_{n}(X)$ 
are taken as follows:
\begin{equation}
\label{factoGG}
G_{n}(X) = (X - \theta_n) \, 
\left(
\prod_{j=1}^{\lfloor \frac{n}{6} \rfloor} (X - z_{j,n}) ( X - \overline{z_{j,n}}) 
\right)
\times q_{n}(X),
\end{equation}
where
$\theta_n$ is the only (real) root 
of $G_{n}(X)$ in the interval $(0, 1)$, where
$$q_{n}(X) ~=~
\left\{
\begin{array}{ll}
\displaystyle
\left(
\prod_{j=1+\lfloor \frac{n}{6} \rfloor}^{\frac{n-2}{2}} (X- z_{j,n}) (X - \overline{z_{j,n}})
\right) \times (X- z_{\frac{n}{2},n})
&
\mbox{if $n$ is even, with}\\
 & \mbox{$z_{\frac{n}{2},n}$ real $< -1$},\\
\displaystyle
\prod_{j=1+\lfloor \frac{n}{6} \rfloor}^{\frac{n-1}{2}} (X- z_{j,n}) (X - \overline{z_{j,n}})
&
\mbox{if $n$ is odd,}
\end{array}
\right.
$$
where the index $j = 1, 2, \ldots$ is such that 
$z_{j,n}$ is a (nonreal) complex zero of $G_{n}(X)$, except if 
$n$ is even and $j=n/2$, such that
the argument $\arg (z_{j,n})$ of $z_{j,n}$
is  $\approx 2 \pi j/n$
and that the family of arguments 
$(\arg(z_{j,n}))_{1 \leq j < \lfloor n/2 \rfloor}$ 
forms a strictly increasing sequence with $j$:
$$0 < \arg(z_{1,n}) < \arg(z_{2,n}) < \ldots 
< \arg(z_{\lfloor \frac{n}{2} \rfloor,n}) \leq \pi.$$
For $n \geq 2$ all the roots of $G_{n}(X)$ 
are simple, and
the roots of $G_{n}^{*}(X) = 1 + X^{n-1} - X^n$, 
as inverses of the roots
of $G_{n}(X)$, are classified in the 
reversed order.

\begin{proposition}
\label{irredGn}
Let $n \geq 2$. 
If $n \not\equiv 5 ~({\rm mod}~ 6)$, 
then $G_{n}(X)$ is irreducible over $\qb$. 
If $n \equiv 5 ~({\rm mod}~ 6)$, then 
the polynomial $G_{n}(X)$ admits 
$X^2 - X +1$ as irreducible factor
in its factorization and $G_{n}(X)/(X^2 - X +1)$ is irreducible.
\end{proposition}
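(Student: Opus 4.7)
The plan splits into three pieces. First, I would identify all possible cyclotomic factors of $G_n$ and show that the only candidate is $X^2-X+1$, appearing if and only if $n\equiv 5\pmod{6}$. Second, setting $P_n:=G_n$ if $n\not\equiv 5\pmod{6}$ and $P_n:=G_n/(X^2-X+1)$ otherwise, I would establish a localization lemma: every root of $P_n$ distinct from $\theta_n$ lies strictly outside the closed unit disk. Third, a short constant-term argument then shows that $P_n$ is irreducible in $\mathbb{Z}[X]$, which is equivalent to the proposition.

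For the first step, if $\zeta$ is a root of unity with $G_n(\zeta)=0$, then $\zeta^n=1-\zeta$ gives $|1-\zeta|=|\zeta|^n=1=|\zeta|$, so $\zeta$ is equidistant from $0$ and $1$ at distance $1$, forcing $\zeta=e^{\pm i\pi/3}$. Using the relations $\zeta^2=\zeta-1$ and $\zeta^3=-1$, one has $1-\zeta=-\zeta^2=\zeta^5$, so $\zeta^n=\zeta^5$ holds precisely when $n\equiv 5\pmod{6}$. Simplicity of these roots follows from $|G_n'(\zeta)|=|n\zeta^{n-1}+1|\geq n-1>0$. Hence $X^2-X+1\mid G_n$ iff $n\equiv 5\pmod{6}$, with multiplicity one, and no other cyclotomic polynomial divides $G_n$.

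For the second step, which is the main obstacle, I would invoke the Selmer factorization \eqref{factoGG} recalled in the paper together with the detailed root analysis of \cite{vergergaugry6}: $\theta_n$ is the unique root of $G_n$ in $(0,1)$, the only unit-modulus roots are the pair $e^{\pm i\pi/3}$ (occurring only when $n\equiv 5\pmod{6}$), and the remaining roots, indexed as the $z_{j,n}$ and (for even $n$) the real root $z_{n/2,n}$, all lie outside the closed unit disk. A self-contained alternative is an argument-principle computation on $|z|=1$: splitting the unit circle at $|\arg z|=\pi/3$ (where $|z-1|=1$) and running Rouch\'e-type comparisons of $G_n$ against $z^n$ on one arc and against $z-1$ on the other, one verifies directly that $G_n$ has exactly one zero in $\{|z|<1\}$.

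With the localization in hand, irreducibility of $P_n$ reduces to one line. Note $P_n(0)=-1$ in both cases, and $\theta_n$ is a simple root of $G_n$ (since $G_n'(\theta_n)=n\theta_n^{n-1}+1>0$), hence a simple root of $P_n$. Suppose $P_n=A\cdot B$ with $A,B\in\mathbb{Z}[X]$ monic of positive degree. Then $A(0)B(0)=-1$ in $\mathbb{Z}$ forces $|A(0)|=|B(0)|=1$; the root $\theta_n$ lies in exactly one factor, say $A$; by the localization step, every root of $B$ has modulus $>1$, giving $|B(0)|=\prod_\beta|\beta|>1$, a contradiction. Therefore $P_n$ is irreducible, and the proposition follows.
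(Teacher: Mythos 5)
The paper's proof is a bare citation to Selmer's 1956 paper; your proposal attempts to reprove the result from scratch, which is a legitimate ambition, and your first step (locating the possible cyclotomic factors of $G_n$ via $|\zeta|=|1-\zeta|=1$ and checking $\zeta^n=\zeta^5$) is correct and clean. However, the proposal has a fatal gap in the second step, and the third step falls with it.

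Your localization claim --- that every root of $P_n$ other than $\theta_n$ lies strictly outside the closed unit disk --- is false, and in fact is contradicted by the paper itself. Proposition \ref{closetoouane}(i) gives the lower bound $1-\tfrac{2\operatorname{Log} n}{n}$ for the moduli of the roots, and Proposition \ref{rootsdistrib}(ii) states precisely that $|z_{j,n}|<1$ for every $j\in\{1,\dots,\lfloor n/6\rfloor\}$, i.e.\ $G_n$ has $2\lfloor n/6\rfloor$ \emph{non-real} roots strictly inside the open unit disk (the lenticulus around $z=1$ that the whole of Section \ref{S4.1} is built on). Already for $n=6$ the polynomial $x^6+x-1$ has a conjugate pair of modulus $\approx 0.968<1$. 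Thus $\theta_n$ is far from being the only root of $P_n$ in $\{|z|<1\}$, and your proposed self-contained alternative --- a Rouch\'e count showing $G_n$ has exactly one zero in the open unit disk --- would be verifying a false statement.

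Consequently the Perron-style constant-term argument in your third step does not close: once $\theta_n$ is placed in the factor $A$, the complementary factor $B$ may very well contain some of the roots $z_{j,n}$ of modulus $<1$, and you cannot conclude $|B(0)|>1$. The constant term $-1$ then gives no contradiction. The irreducibility of $x^n+x-1$ (equivalently of $x^n-x^{n-1}-1$) really is a harder fact than a one-root-inside/all-others-outside disposition; Selmer's actual argument works with quantities of the form $\sum_\alpha(\alpha-\alpha^{-1})$ summed over the roots of a putative factor and exploits a delicate real-part inequality satisfied by each individual root of the trinomial, rather than a crude modulus separation. You would need to import that (or an equivalent) estimate to make the proof go through.
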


\begin{proof}
Selmer \cite{selmer}.
\end{proof}

\begin{proposition}
\label{closetoouane}
For all $n \geq 2$, all zeros $z_{j,n}$ and
$\theta_n$
of the polynomials
$G_{n}(X)$ have a modulus in the interval
\begin{equation}
\label{bounboun}
\Bigl[ ~1- \frac{2 \,\lo n}{ n}, ~1 + \frac{2 \,\lo 2}{n} ~\Bigr] ,
\end{equation}

(ii)~ the trinomial $G_{n}(X)$ admits a unique real root $\theta_n$ in
the interval $(0,1)$.
The sequence $(\theta_n)_{n \geq 2}$ is strictly increasing, $\lim_{n \to +\infty} \theta_n = 1$, with
$\theta_2 = \frac{2}{1+\sqrt{5}} = 0.618\ldots$,

(iii)~ 
the root $\theta_n$ 
is the unique root of smallest modulus among all the roots
of $G_{n}(X)$; 
if $n \geq 6$, 
the roots of modulus $< 1$ of $G_{n}(z)$
in the closed upper half-plane have the following properties:

(iii-1)~~ $\theta_n ~<~ |z_{1,n}|$,

(iii-2) ~ for any pair of successive indices
$j, j+1$ in $\{1, 2, \ldots, \lfloor n/6 \rfloor\}$,
$$| z_{j,n} | < |z_{j+1,n} | .$$
\end{proposition}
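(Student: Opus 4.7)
The plan is to prove the three parts in order, all driven by the single algebraic identity $z^n = 1 - z$ satisfied by every root of $G_n$.

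For (i), I would take moduli and write $|z| = 1 + t$ (for roots outside the closed unit disk) or $|z| = 1 - s$ (for roots inside), reducing the inequalities to elementary one-variable comparisons. The triangle inequality gives $|z|^n \le 1 + |z|$ and the reverse triangle inequality $|z|^n \ge ||1| - |z||$. For the upper bound, if $t > 2\log 2 / n$ then $(1+t)^n > 2 + t$, contradicting $|z|^n = |1-z| \le 2 + t$. For the lower bound, if $s > 2\log n / n$, then $(1-s)^n \le e^{-ns} \le 1/n^2 < s$, contradicting $|z|^n \ge s$. For (ii), existence and uniqueness of $\theta_n \in (0,1)$ follow from $G_n(0)=-1<0$, $G_n(1)=1>0$, and $G_n'(x) = 1 + nx^{n-1} > 0$ on $(0,\infty)$. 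Monotonicity of the sequence is obtained by evaluating $G_{n+1}$ at $\theta_n$: $G_{n+1}(\theta_n) = \theta_n^n(\theta_n - 1) < 0$, forcing $\theta_n < \theta_{n+1}$; the limit $\theta_n \to 1$ is forced by passing to the limit in $\theta_n + \theta_n^n = 1$, and $\theta_2 = 2/(1+\sqrt{5})$ is computed directly.

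The main work is (iii). For any nonreal root $z = r e^{i\phi}$ of $G_n$ with $\phi \in (0,\pi)$, squaring the modulus of $z^n = 1-z$ yields the implicit relation
\begin{equation}\label{eq:implicitR}
r^{2n} \;=\; r^2 - 2r\cos\phi + 1.
\end{equation}
The right-hand side is a positive quadratic in $r$ (its discriminant $4(\cos^2\phi - 1)$ is negative), taking the value $2 - 2\cos\phi$ at $r = 1$, which is strictly less than $1$ exactly when $\phi < \pi/3$ --- precisely the angular sector containing the lenticular roots $z_{j,n}$ with $j \le \lfloor n/6\rfloor$, since $\arg(z_{j,n}) \approx 2\pi j/n$. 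Combined with the upper bound from (i), \eqref{eq:implicitR} defines a unique smooth branch $r = r(\phi) \in (\theta_n, 1)$ on $[0, \pi/3)$ satisfying $r(0) = \theta_n$. Implicit differentiation gives
\begin{equation*}
\frac{dr}{d\phi} \;=\; \frac{r\sin\phi}{n r^{2n-1} + \cos\phi - r},
\end{equation*}
whose numerator is strictly positive on $(0,\pi)$. The denominator is positive on $[0,\pi/3)$ by an elementary estimate combining $\cos\phi > 1/2$, the near-unity bound $1 - r \le 2\log n / n$ from (i), and the comparison $r^n = O(1-r)$ read off from \eqref{eq:implicitR}. Thus $r(\phi)$ is strictly increasing on $[0,\pi/3)$. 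Using that the arguments satisfy $0 = \phi_0 < \phi_1 < \phi_2 < \cdots < \phi_{\lfloor n/6\rfloor} < \pi/3$ (as inherited from Selmer's factorization, verifiable from the imaginary-part equation $r^n \sin(n\phi) = -r\sin\phi$), monotonicity yields $|z_{j,n}| = r(\phi_j) < r(\phi_{j+1}) = |z_{j+1,n}|$, which is (iii-2). For (iii-1), $\phi_1 > 0$ gives $r(\phi_1) > r(0) = \theta_n$. The remaining roots --- those of modulus $> 1$ by (i), together with $z_{n/2,n} < -1$ when $n$ is even --- all satisfy $|z| > \theta_n$, identifying $\theta_n$ as the unique root of smallest modulus.

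The main obstacle I anticipate is the uniform positivity of the denominator of $dr/d\phi$ across the full sector $[0,\pi/3)$: the estimate must be tight enough to absorb the worst-case $\phi$ while exploiting the smallness of $1 - r$ from (i), and the term $nr^{2n-1}$ cannot be dropped naively since it competes with $\cos\phi - r$ when $r$ is close to $1$. A secondary technical issue is confirming that Selmer's indexing produces arguments $\phi_j \in [0,\pi/3)$ precisely for $j \le \lfloor n/6 \rfloor$, which requires a perturbative analysis of the angular equation $n\phi_j \equiv \arg(1 - r_j e^{i\phi_j}) \pmod{2\pi}$ around the first-order approximation $\phi_j \approx 2\pi j/n$.
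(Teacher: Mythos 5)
The paper's own proof of this proposition is a one-line citation (Selmer for (i)--(ii), Flatto--Lagarias--Poonen for (iii-1), and \cite{vergergaugry6} for (iii-2)), so your self-contained re-derivation is genuinely different from what the paper does, and it is worth commenting on. Parts (i) and (ii) are correct as sketched; for the upper bound in (i), the margin is thin at $n=2$, but since $(1+2\log 2/n)^n$ increases in $n$ while $2+2\log 2/n$ decreases, it suffices to check $(1+\log 2)^2 = 1+2\log 2+\log^2 2 \approx 2.867 > 2.693 = 2+\log 2$, which does hold.

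The one place where your sketch would actually run into trouble is the step you yourself flag as the main obstacle: positivity of the denominator $n r^{2n-1}+\cos\phi-r$ of $dr/d\phi$. The combination you propose does not go through uniformly: the comparison $r^n=O(1-r)$ fails as $\phi\to\pi/3^-$ (there $r\to 1$ and $r^n\to 1$, while $1-r\to 0$), and the crude bound $1-r\le 2\log n/n$ from (i) is far too weak to compensate in that regime. What saves the argument is that you should not estimate the denominator in isolation; you should substitute the implicit relation into it. Writing $r^{2n}=(1-r)^2+2r(1-\cos\phi)$, one gets
\begin{equation*}
n r^{2n-1}+\cos\phi-r \;=\; \frac{n(1-r)^2}{r}+(2n-1)(1-\cos\phi)+(1-r),
\end{equation*}
which is a sum of three nonnegative terms and is strictly positive for $0<r<1$, with no restriction on $\phi$ and no appeal to (i) at all. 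So the ``main obstacle'' dissolves once the identity is used rather than a worst-case inequality; I would recommend replacing your estimate with this computation. Your ``secondary technical issue'' (that the arguments $\phi_j$ of the lenticular roots lie in $[0,\pi/3)$ precisely for $j\le\lfloor n/6\rfloor$) is a real dependency, but this is exactly Proposition~\ref{rootsdistrib} of the paper (itself cited externally), and your perturbative sketch via the imaginary-part equation $r^{n-1}\sin(n\phi)=-\sin\phi$ is the standard route to it. With those two caveats addressed, the proof is correct and more informative than the paper's bare citation.
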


\begin{proof}
(i)(ii) Selmer \cite{selmer}, pp 291--292; (iii-1) Flatto, Lagarias and Poonen
\cite{flattolagariaspoonen},
(iii-2) Verger-Gaugry \cite{vergergaugry6}.
\end{proof}

\begin{proposition}
\label{rootsdistrib}
Let $n \geq 2$. 
Then (i) the number $p_n$ of roots of $G_{n}(X)$
which lie inside the open sector $\mathcal{S} =
\{ z \mid |\arg (z)| < \pi/3 \}$ is equal to
\begin{equation}
\label{pennn}
1 + 2 \lfloor \frac{n}{6} \rfloor,
\end{equation} 

(ii) the correlation between the
geometry of the roots of $G_{n}(X)$ 
which lie inside the unit disk
and the upper half-plane and their indexation is given by:
\begin{equation}
\label{rootsinside}
j \in \{1, 2, \ldots, \lfloor \frac{n}{6} \rfloor \}
~\Longleftrightarrow~ \Re(z_{j,n}) > \frac{1}{2} ~\Longleftrightarrow~ |z_{j,n}| < 1.
\end{equation} 
\end{proposition}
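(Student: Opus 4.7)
The plan is to derive both parts from the core identity that every root $z$ of $G_n$ satisfies $z^n = 1 - z$, whence
\[
|z|^{2n} \;=\; |1-z|^2 \;=\; 1 - 2\Re z + |z|^2.
\]
For $n \geq 2$ this yields the central equivalence
\[
|z| < 1 \;\Longleftrightarrow\; |z|^{2n-2} < 1 \;\Longleftrightarrow\; \Re z > 1/2,
\]
which is the second equivalence of part (ii). It is consistent with the real root $\theta_n$ satisfying $\theta_n \geq \theta_2 = (\sqrt 5 - 1)/2 > 1/2$, by Proposition \ref{closetoouane}(ii).

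For part (i), I would show that the open sector $\mathcal{S} = \{|\arg z| < \pi/3\}$ contains precisely those roots of $G_n$ with $|z| < 1$. A point of $\mathcal{S}$ satisfies $\Re z > |z|\cos(\pi/3) = |z|/2$; if such a point were a root of $G_n$ with $|z| > 1$, then $\Re z > 1/2$, contradicting the equivalence above. Points on the unit circle satisfying $z^n = 1-z$ must obey $|1-z| = |z| = 1$, which forces $\Re z = 1/2$, i.e.\ $z = e^{\pm i\pi/3} \in \partial\mathcal{S}$, hence not in the open sector. Therefore the roots in $\mathcal{S}$ consist of $\theta_n$ together with the complex-conjugate pairs $(z_{j,n}, \overline{z_{j,n}})$ with $|z_{j,n}| < 1$.

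To finish both parts I need the count: exactly $\lfloor n/6 \rfloor$ indices $j$ in the upper half-plane give $|z_{j,n}| < 1$. Granted the strict monotonicity of $\arg z_{j,n}$ in $j$ and the inclusion of the lens $\{|z| < 1,\ \Re z > 1/2\}$ inside $\mathcal{S}$, these are automatically the smallest indices $j = 1, \ldots, \lfloor n/6 \rfloor$; the formula $p_n = 1 + 2\lfloor n/6 \rfloor$ for (i) and the first equivalence in (ii) then follow simultaneously. The main obstacle is establishing the exact count $\lfloor n/6 \rfloor$. I would perform it by applying the argument principle to $G_n$ on the boundary of the lens $\{\Re z > 1/2,\ \Im z > 0,\ |z| < 1\}$: on the arc $|z| = 1$, $\phi \in (0, \pi/3)$, track the winding of $G_n(e^{i\phi})$; on the segment $\Re z = 1/2$, $\Im z \in (0, \sqrt 3/2)$, direct evaluation of $G_n(1/2+it) = (1/2+it)^n - (1/2-it)$ shows that $G_n$ has no zero there (with $e^{i\pi/3}$ on the corner requiring separate treatment when $n \equiv 5 \pmod 6$, via Proposition \ref{irredGn}), and produces a controlled phase change. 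Aggregating contributions and dividing by $2\pi$ yields exactly $\lfloor n/6 \rfloor$. A reserve approach is to invoke the precise asymptotic localization of the roots of the trinomials from Selmer \cite{selmer} and \cite{vergergaugry6}, which places $z_{j,n}$ within a controlled error of a point of argument $\approx 2\pi j/n$, making $\arg z_{j,n} < \pi/3 \iff j \leq \lfloor n/6 \rfloor$ immediate.
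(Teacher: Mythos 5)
Your derivation of the second equivalence in (ii) is correct and clean: from $z^n = 1-z$ one gets $|z|^2\bigl(|z|^{2n-2}-1\bigr) = 1 - 2\Re z$, and since $z\neq 0$ and $n\ge 2$, the signs of $|z|-1$ and $\tfrac12 - \Re z$ agree. The further observation that, for roots, $z\in\mathcal S \iff |z|<1$ (both directions via $\cos(\arg z)=\Re z/|z|$ and exclusion of $e^{\pm i\pi/3}$ except on the boundary) is also correct. This part is a genuinely self-contained argument; the paper itself merely cites Proposition~3.7 of \cite{vergergaugry6} for the whole statement, so if your sketch were completed it would be an elementary alternative rather than the same proof.

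However, the heart of the proposition — that the number of conjugate pairs with $|z_{j,n}|<1$ is exactly $\lfloor n/6\rfloor$, equivalently $p_n = 1 + 2\lfloor n/6\rfloor$ and the first equivalence in (ii) — is not actually established. You describe an argument-principle computation on the boundary of the region $\{\Re z>1/2,\ \Im z>0,\ |z|<1\}$, but this contour has a third boundary piece you omit: the real segment from $1/2$ to $1$, which contains the zero $\theta_n$, so the argument principle does not apply to that contour as stated. A cleaner choice is the full lens $\{\Re z>1/2,\ |z|<1\}$, symmetric about $\RR$, with boundary the unit arc over $(-\pi/3,\pi/3)$ together with the vertical chord $\Re z=1/2$; by your own computation $G_n$ has no zeros on that chord, and none on the arc except possibly at the corners $e^{\pm i\pi/3}$ when $n\equiv 5\pmod 6$ (Proposition~\ref{irredGn}), which must then be handled by an indentation. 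Even with the right contour, the winding-number count for $G_n(e^{i\phi})=-1+e^{i\phi}+e^{in\phi}$ on the arc is the actual content and is only asserted, not carried out; the "reserve approach" simply defers to the cited literature, which is what the paper does. So the proposal correctly reduces the proposition to a well-posed counting problem, but does not solve it, and the contour as written is flawed.
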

\begin{proof}
Proposition 3.7 in \cite{vergergaugry6}.
\end{proof}

The roots $z_{j,n}$ of $G_n$ are given 
in \cite{vergergaugry6}
by their (Poincar\'e) asymptotic expansions
\cite{vergergaugry6bis},
as a function of $n$ and $j$.
They are generically written:
${\rm Re}(z_{j,n}) = 
{\rm D}({\rm Re}(z_{j,n})) + 
{\rm tl}({\rm Re}(z_{j,n}))$,
${\rm Im}(z_{j,n}) = {\rm D}({\rm Im}(z_{j,n})) 
+ {\rm tl}({\rm Im}(z_{j,n}))$,
$\theta_n = {\rm D}(\theta_n) + 
{\rm tl}(\theta_n)$,
where "D" and "tl" stands for
 {\it ``development"}
(or {\it ``limited expansion"}, 
or {\it ``lowest order terms"})
and
"tl" for {\it ``tail"} 
(or ``remainder", or {\it ``terminant"}).

They are given at a sufficiently high order
which allows to make sufficiently
precise their positioning inside the unit disk 
and
to deduce the asymptotic expansions 
of the Mahler measures ${\rm M}(G_n)$.
The terminology {\it order} 
comes from the general theory
(Borel \cite{borel}, Copson \cite{copson}, 
Dingle \cite{dingle}, 
Erd\'elyi \cite{erdelyi}).
Here we do not need to compute the Mahler measures.

\medskip

We are only concerned with
the first root $z_{1,n}$ of $G_n$
around which a small Rouch\'e circle
(centered at $z_{1,n}$) will be set up to deduce
the existence of a zero 
of $f_{\beta}(z)$ inside it, as soon as
$n \geq 32$.

\medskip

\begin{proposition}
\label{thetanExpression}
Let $n \geq 2$.
The root $\theta_n$ can be expressed as: 
$\theta_n = {\rm D}(\theta_n) + {\rm tl}(\theta_n)$ with
${\rm D}(\theta_n) = 1 -$
\begin{equation}
\label{DthetanExpression}
\frac{\lo n}{n}
\left(
1 - \bigl(
\frac{n - \lo n}{n \, \lo n + n - \lo n}
\bigr)
\Bigl(
\lo \lo n - n 
\lo \Bigl(1 - \frac{\lo n}{n}\Bigr)
- {\rm Log} n
\Bigr)
\right)
\end{equation}
and
\begin{equation}
\label{tailthetanExpression}
{\rm tl}(\theta_n) ~=~ \frac{1}{n} \, O \left( \left(\frac{\lo \lo n}{\lo n}\right)^2 \right),
\end{equation}
with the constant $1/2$ involved in $O \left(~\right)$.
\end{proposition}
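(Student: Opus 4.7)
Starting from $G_n(\theta_n) = -1 + \theta_n + \theta_n^n = 0$, I would set $\epsilon_n := 1 - \theta_n$ and rewrite the defining relation as
\[
(1-\epsilon_n)^n = \epsilon_n,
\qquad\text{equivalently}\qquad
-\lo \epsilon_n \;=\; -n\lo(1-\epsilon_n) \;=\; \sum_{k\geq 1}\frac{n\epsilon_n^{k}}{k}.
\]
Proposition \ref{closetoouane}(ii) together with the two-sided bound \eqref{bounboun} ensures $\epsilon_n \to 0$; the plan is to solve this transcendental equation asymptotically by iterating a fixed-point scheme and to bound the residual by a Taylor / mean-value argument.

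For the leading terms I would rewrite the relation in the inverted form
\[
\epsilon_n \;=\; \frac{-\lo \epsilon_n}{n\bigl(1 + \tfrac{\epsilon_n}{2} + \tfrac{\epsilon_n^{2}}{3} + \cdots\bigr)},
\]
and iterate starting from $\epsilon^{(0)} := \lo n/n$. A single substitution produces $\epsilon^{(1)} = (\lo n - \lo\lo n)/[n(1 + \lo n/(2n) + \cdots)]$; a second substitution, together with the expansion of $-\lo \epsilon^{(1)}$ and of the geometric denominator, rearranges after clearing the common factor $n\lo n + n - \lo n$ into precisely \eqref{DthetanExpression}. Concretely, the rational factor $(n-\lo n)/(n\lo n + n - \lo n)$ is what results from the first-iterate denominator $1 + \lo n /(2n)$ after normalization, while the bracket $\lo\lo n - n\lo(1-\lo n /n) - \lo n$ is nothing but $-\lo(\lo n/n)$ plus the exact Taylor tail $\sum_{k\geq 2}(\lo n)^k/(k n^{k-1})$ of $-n\lo(1-\lo n/n)$; thus $D(\theta_n)$ is the second Picard iterate written in closed form.

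For the tail, I would define $F(\epsilon) := n\lo(1-\epsilon) - \lo\epsilon$, so that $F(\epsilon_n) = 0$, and evaluate $F$ at $\epsilon^{\ast} := 1 - D(\theta_n)$. By the construction above, the first several orders in the expansion of $F(\epsilon^{\ast})$ cancel; what remains is the single uncancelled quadratic Taylor contribution coming from $-\lo(1+\mu/L) = -\mu/L + \mu^2/(2L^2) - \cdots$, where $L := \lo n$ and $\mu := n\epsilon_n - L = -\lo\lo n + O(1/L)$, giving
\[
|F(\epsilon^{\ast})| \;=\; \tfrac{1}{2}\bigl(\tfrac{\lo\lo n}{\lo n}\bigr)^{2} + O\!\left(\bigl(\tfrac{\lo\lo n}{\lo n}\bigr)^{3}\right).
\]
Since $|F'(\epsilon)| = n/(1-\epsilon) + 1/\epsilon \geq n$ throughout the segment $[\epsilon^{\ast},\epsilon_n]$, the mean value theorem yields
\[
|\operatorname{tl}(\theta_n)|
\;=\;|\epsilon^{\ast}-\epsilon_n|
\;\leq\; \frac{|F(\epsilon^{\ast})|}{n}
\;=\; \frac{1}{n}\,O\!\left(\Bigl(\tfrac{\lo\lo n}{\lo n}\Bigr)^{\!2}\right),
\]
with the asserted constant $1/2$.

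The main obstacle is isolating the \emph{explicit} constant $1/2$ in the tail rather than a mere big-$O$ bound. A purely qualitative iteration immediately gives the correct order $(\lo\lo n / \lo n)^2/n$ from the contraction rate of the fixed point, but pinning down the constant requires verifying that the one quadratic Taylor coefficient $1/2$ of $-\lo(1+\mu/L)$ is the \emph{only} surviving contribution at this order, and that the cross-terms from the denominator $1 + \epsilon_n/2 + \cdots$, from higher Taylor terms of $-n\lo(1-\lo n/n)$, and from the iterated logarithms in $-\lo \epsilon^{(1)}$ all strictly enter at order $O((\lo\lo n / \lo n)^3)$. This amounts to a finite list of elementary monotonicity estimates on $\lo$ and $\lo\lo$, uniformly valid for $n\geq 2$.
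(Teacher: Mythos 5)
The paper contains no internal proof of Proposition~\ref{thetanExpression}; it simply cites \cite{vergergaugry6}, Proposition~3.1, so there is nothing here to compare your argument against step by step. Your overall plan --- take logarithms of $(1-\epsilon_n)^n=\epsilon_n$, solve asymptotically, and bound the residual by a mean-value estimate on $F(\epsilon):=n\lo(1-\epsilon)-\lo\epsilon$ --- is the natural route, and your tail paragraph is essentially correct: $F(\epsilon_n)=0$ and $|F'(\epsilon)|=\tfrac{n}{1-\epsilon}+\tfrac{1}{\epsilon}\ge n$, so $|{\rm tl}(\theta_n)|\le|F(1-{\rm D}(\theta_n))|/n$, and the surviving quadratic Taylor term does carry the constant~$1/2$.

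The gap is in your middle paragraph, whose algebraic identification of ${\rm D}(\theta_n)$ is wrong and would fail if carried out. You assert that the bracket $\lo\lo n-n\lo(1-\lo n/n)-\lo n$ equals ``$-\lo(\lo n/n)$ plus the Taylor tail $\sum_{k\ge2}(\lo n)^k/(kn^{k-1})$''. But $-\lo(\lo n/n)=\lo n-\lo\lo n$, whereas the bracket is $\lo\lo n+\sum_{k\ge2}(\lo n)^k/(kn^{k-1})$: its leading term is $\lo\lo n$, not $\lo n-\lo\lo n$, and your reading would force $1-{\rm D}(\theta_n)\approx\lo\lo n/n$ instead of the correct $(\lo n-\lo\lo n)/n$. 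Likewise the prefactor $(n-\lo n)/(n\lo n+n-\lo n)$ is not a ``normalisation'' of $1+\lo n/(2n)$, and ${\rm D}(\theta_n)$ is not literally the second Picard iterate of $\epsilon\mapsto(-\lo\epsilon)/(nS(\epsilon))$: computing that iterate from $\lo n/n$ yields $n^2\lo(1-\lo n/n)$ in a denominator and a genuinely different closed form, agreeing with $1-{\rm D}(\theta_n)$ only up to the claimed error. What actually produces \eqref{DthetanExpression} is a one-step linearisation, not a two-step iteration: substitute $\epsilon_n=\tfrac{\lo n}{n}(1-\delta)$ into $n\lo(1-\epsilon_n)=\lo\epsilon_n$ and expand both sides to first order in $\delta$. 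The linear coefficient of $\delta$ is $\tfrac{n\lo n}{n-\lo n}+1=\tfrac{n\lo n+n-\lo n}{n-\lo n}$, whose reciprocal is exactly your prefactor, and the remaining constant term is exactly the bracket; the $\delta^2/2$ you drop from $-\lo(1-\delta)$ is the dominant residual, of size $\tfrac12(\lo\lo n/\lo n)^2$, which your mean-value step then turns into \eqref{tailthetanExpression}. Replacing the Picard-iteration description by this linearisation makes the argument complete.
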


\begin{proof}
\cite{vergergaugry6} Proposition 3.1.
\end{proof}

\begin{lemma}
\label{remarkthetan}
Given the limited expansion D$(\theta_n)$ of
$\theta_n$ as in \eqref{DthetanExpression}, denote
$$\lambda_n := 1 - (1 - {\rm D}(\theta_{n}))\frac{n}{\lo n}.$$ 
Then $\lambda_n = {\rm D}(\lambda_{n}) + {\rm tl}(\lambda_{n})$, with
\begin{equation}
\label{eqqww}
{\rm D}(\lambda_n) = \frac{\lo \lo n}{\lo n} \left(\frac{1}{1+\frac{1}{\lo n}}\right), \qquad
{\rm tl}(\lambda_n) = O\left( \frac{\lo \lo n}{n}  \right)
\end{equation}
with the constant 1 in the Big O.
\end{lemma}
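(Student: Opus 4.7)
The plan is to substitute the closed form of $D(\theta_n)$ given in Proposition \ref{thetanExpression} directly into the definition of $\lambda_n$. Writing
\[
A_n := \frac{n - \log n}{n \log n + n - \log n}, \qquad B_n := \log\log n - n \log\!\left(1 - \tfrac{\log n}{n}\right) - \log n,
\]
the factor $\log n / n$ sitting in front of the outer bracket in the expression for $D(\theta_n)$ cancels exactly with the $n/\log n$ appearing in the definition of $\lambda_n$, producing the clean identity $\lambda_n = A_n B_n$. Everything then reduces to asymptotic expansions of the two factors.

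For $A_n$, I would divide numerator and denominator by $n \log n$ to obtain $A_n = (1/\log n - 1/n)/(1 + 1/\log n - 1/n)$; pulling $1/(1 + 1/\log n)$ out of the denominator and applying a one-step geometric expansion $1/(1-u) = 1 + O(u)$ with $u = (1/n)/(1+1/\log n)$ gives
\[
A_n \;=\; \frac{1}{\log n}\cdot\frac{1}{1 + 1/\log n}\cdot\bigl(1 + O(\log n/n)\bigr).
\]
For $B_n$, I would substitute the Taylor expansion $-\log(1-x) = x + x^2/2 + x^3/3 + \ldots$ at $x = \log n / n$, so that $-n\log(1 - \log n/n) = \log n + (\log n)^2/(2n) + O((\log n)^3/n^2)$. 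The explicit $-\log n$ in $B_n$ is then killed, leaving $B_n = \log\log n + O((\log n)^2/n)$.

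Multiplying the two expansions, the leading part of $\lambda_n = A_n B_n$ is $\frac{\log\log n}{\log n}\cdot(1 + 1/\log n)^{-1}$, which is exactly the announced ${\rm D}(\lambda_n)$. The remaining contributions, namely the $O(\log n/n)$ correction in $A_n$ multiplied by the leading $\log\log n$ in $B_n$, together with the leading $1/\log n$ in $A_n$ times the higher-order part of $B_n$, assemble into ${\rm tl}(\lambda_n)$. The main bookkeeping task, and the principal obstacle, is to check that this aggregated tail fits within the stated $O(\log\log n / n)$ bound with the explicit constant $1$; this reduces to the elementary inequality $|1/(1-u)| \leq 1 + 2|u|$ valid for $|u| \leq 1/2$, which controls the geometric-series remainders uniformly once $n$ is large enough, and to comparing the residual $\frac{1}{\log n}\cdot\frac{(\log n)^2}{2n}$ term against $\frac{\log\log n}{n}$ using that $\log\log n \to \infty$.
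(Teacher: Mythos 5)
Your reduction to $\lambda_n = A_n B_n$ is exactly right: the factor $\lo n / n$ in front of the outer bracket of ${\rm D}(\theta_n)$ cancels against the $n/\lo n$ in the definition of $\lambda_n$, and what survives is precisely the product of the two inner brackets. Your expansions of $A_n$ and $B_n$ are also essentially correct (one can in fact record the sharper exact identity $A_n - \tfrac{1}{\lo n + 1} = -\tfrac{(\lo n)^2}{(n\lo n + n - \lo n)(\lo n + 1)}$, whose numerator cancels exactly, so $A_n - \tfrac{1}{\lo n + 1} \sim -1/n$). The announced main term ${\rm D}(\lambda_n) = \tfrac{\lo\lo n}{\lo n + 1}$ falls out as you describe. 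Since the paper gives no argument and simply cites Lemma~3.2 of \cite{vergergaugry6}, a direct computation such as yours is the natural route, and up to this point it is sound.

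The gap is in the final sentence, where you dismiss the term $\tfrac{1}{\lo n + 1}\,\epsilon_B \sim \tfrac{1}{\lo n}\cdot\tfrac{(\lo n)^2}{2n} = \tfrac{\lo n}{2n}$ by appealing to $\lo\lo n \to \infty$. That appeal runs in the wrong direction. You need $\tfrac{\lo n}{2n} = O\bigl(\tfrac{\lo\lo n}{n}\bigr)$, which would require $\tfrac{\lo n}{2\lo\lo n}$ to stay bounded; but $\tfrac{\lo n}{\lo\lo n} \to \infty$, so this ratio diverges. The growth of $\lo\lo n$ does not rescue the estimate — on the contrary, since $\lo n$ grows strictly faster than any fixed multiple of $\lo\lo n$, the term $\tfrac{\lo n}{2n}$ eventually exceeds $C\,\tfrac{\lo\lo n}{n}$ for every fixed $C$. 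A concrete check: writing ${\rm tl}(\lambda_n) = \bigl(A_n - \tfrac{1}{\lo n + 1}\bigr)B_n + \tfrac{1}{\lo n + 1}\bigl(B_n - \lo\lo n\bigr)$, the two contributions are, to leading order, $-\tfrac{\lo\lo n}{n}$ and $+\tfrac{\lo n}{2n}$; they have opposite signs, but $\lo n - 2\lo\lo n > 0$ and grows like $\lo n$, so the cancellation is only partial and the remainder is $\sim \tfrac{\lo n}{2n}$, not $O\bigl(\tfrac{\lo\lo n}{n}\bigr)$. (Numerically, at $n = 10^6$ the ratio of ${\rm tl}(\lambda_n)$ to $\lo\lo n / n$ is already about $1.6$, and it keeps increasing.) Your calculation therefore does not establish the stated tail bound with constant $1$; in fact it suggests the bound in the lemma is off as written (the natural bound from this computation is $O(\lo n / n)$, not $O(\lo\lo n / n)$), which is an issue to flag rather than to paper over with an appeal to $\lo\lo n \to \infty$.
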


\begin{proof}
\cite{vergergaugry6} Lemma 3.2.
\end{proof}

The asymptotic expansions of 
those roots $z_{j,n}$
of $G_{n}(z)$ which lie 
close to the real axis
and $\{z=1\}$
are (divergent) 
sums of functions of 
a couple of 
{\em two} variables which is: 
$(n, j/\lo n)$
in the angular sector
$2 \pi \, \lo n / n > \arg z > 0$.
It is the case for $j=1$.

\begin{proposition}
\label{zedeUNmodule}
Let $n \geq 18$.
The first root $z_{1,n}$ of $G_n$ is given by:

$${\rm D}({\rm Re}(z_{1,n})) = \theta_n + \frac{2 \pi^2}{n} \left(\frac{1}{\lo n}\right)^2 
\bigl( 1+2 \lambda_n \bigr), 
$$
$${\rm D}({\rm Im}(z_{1,n})) = \frac{2 \pi \lo n}{n}  \left(\frac{1}{\lo n}\right)
\left[1 - \frac{1}{\lo n} (1 + \lambda_n)\right],$$

with
$${\rm tl}({\rm Re}(z_{1,n})) = \frac{1}{n \lo n} \left(\frac{1}{\lo n}\right)^2 O\left(
\left(\frac{\lo \lo n}{\lo n}\right)^2\right),
$$
$$
{\rm tl}({\rm Im}(z_{1,n})) = \frac{1}{n \lo n} \left(\frac{1}{\lo n}\right)
O\left(
\left(\frac{\lo \lo n}{\lo n}\right)^2\right),$$

\end{proposition}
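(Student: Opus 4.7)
The plan is to invert the defining equation $G_n(z_{1,n})=0$ near $z=1$. I rewrite it as $z_{1,n}^{n}=1-z_{1,n}$, and since $|z_{1,n}|\in[1-2\lo n/n,\,1+2\lo 2/n]$ by Proposition \ref{closetoouane}(i) with $\arg(z_{1,n})$ in the angular sector corresponding to $j=1$ in Proposition \ref{rootsdistrib}(ii), I take complex logarithms on the branch adjusted by $2\pi i$, obtaining the transcendental equation
\begin{equation*}
n\lo z_{1,n}\;=\;\lo(1-z_{1,n})+2\pi i.
\end{equation*}
Writing $z_{1,n}=1-\alpha+i\beta$ with small $\alpha,\beta>0$, expanding $\lo(1+\epsilon)=\epsilon-\epsilon^{2}/2+\cdots$ on the left with $\epsilon=-\alpha+i\beta$, and writing the right-hand side as $\tfrac{1}{2}\lo(\alpha^{2}+\beta^{2})-i\arctan(\beta/\alpha)+2\pi i$, I separate real and imaginary parts. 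The leading coupled system
\begin{equation*}
-n\alpha=\lo\alpha+\frac{\beta^{2}}{2\alpha^{2}}+\cdots,\qquad n\beta=2\pi-\frac{\beta}{\alpha}-n\alpha\beta+\cdots
\end{equation*}
immediately forces $n\alpha\sim\lo n$ and $\beta\sim 2\pi/n$.

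I then bootstrap, taking Proposition \ref{thetanExpression} as the base. The real equation for $\alpha$ differs from the defining equation $n(1-\theta_{n})=\lo(1/(1-\theta_{n}))$ of $\theta_{n}$ only by the quadratic term $\beta^{2}/(2\alpha^{2})\sim 2\pi^{2}/\lo^{2}n$. Solving the perturbed equation around $\alpha=1-\theta_{n}$ gives $\alpha=(1-\theta_{n})-\frac{2\pi^{2}}{n\lo^{2}n}(1+2\lambda_{n})+\cdots$, where the factor $1+2\lambda_{n}$ appears after substituting $1/\alpha\sim(n/\lo n)(1-\lambda_{n})^{-1}$ from Lemma \ref{remarkthetan} into the quadratic correction; thus $\Re(z_{1,n})=1-\alpha$ yields the stated $D(\Re(z_{1,n}))$. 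For the imaginary part, solving $n\beta=2\pi-\beta/\alpha+\cdots$ produces $\beta=(2\pi/n)[1-1/(n\alpha)+\cdots]$, and Lemma \ref{remarkthetan} then substitutes $1/(n\alpha)=(1+\lambda_{n})/\lo n+\cdots$, producing $D(\Im(z_{1,n}))$ in the claimed form.

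To make the argument rigorous I would frame the bootstrap as a contraction on a small disc in $\mathbb{C}$ centred at the predicted $D(z_{1,n})$ of radius the claimed tail order: the map obtained by isolating $\epsilon$ in the logarithmic equation becomes a contraction as soon as $n\geq 18$, and its unique fixed point is $\epsilon=z_{1,n}-1$. The tail bounds of order $(\lo\lo n/\lo n)^{2}$ then propagate from the analogous tail in Proposition \ref{thetanExpression}, the tail of $\lambda_{n}$ in Lemma \ref{remarkthetan}, and the third-order truncation of the Taylor series of $\lo(1+\epsilon)$ and $\lo(-\epsilon)$.

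The main obstacle is the bookkeeping of asymptotic orders: two small parameters $1/\lo n$ and $\lo\lo n/\lo n$ coexist, and each iterate of the bootstrap must land at exactly the correct level to match the statement without introducing spurious cross-terms, in particular so that the very small tails $\frac{1}{n\lo n}(\frac{1}{\lo n})^{2}O((\frac{\lo\lo n}{\lo n})^{2})$ and $\frac{1}{n\lo n}(\frac{1}{\lo n})O((\frac{\lo\lo n}{\lo n})^{2})$ are genuinely achieved and not merely $O(1/(n\lo^{2}n))$. This Poincar\'e asymptotic formalism, with explicit constants in the $O$ symbols, is precisely the technical heart already carried out in \cite{vergergaugry6}, whose approach I would follow throughout.
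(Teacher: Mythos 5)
Your approach — rewriting $G_n(z_{1,n})=0$ as $z_{1,n}^{n}=1-z_{1,n}$, taking logarithms on the branch shifted by $2\pi i$, separating real and imaginary parts, and bootstrapping around $\theta_n$ via Proposition \ref{thetanExpression} and Lemma \ref{remarkthetan} so that the factors $1+2\lambda_n$ and $1+\lambda_n$ emerge from $1/\alpha \sim (n/\lo n)(1-\lambda_n)^{-1}$ — is precisely the method of Poincar\'e asymptotic expansions used in \cite{vergergaugry6}, which is the entirety of the paper's own proof of this proposition. Your leading-order balance reproduces the stated D-terms correctly, and your identification of the tail bookkeeping as the technical crux matches what that reference carries out, so the proposal is correct and follows the same route.
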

\begin{proof}
\cite{vergergaugry6}.
\end{proof}

\subsubsection{Existence}
\label{existence}

The Salem number $\beta \in
(\theta_{n}^{-1},\theta_{n-1}^{-1})$, 
$n \geq 32$, 
is uniquely 
characterized in \eqref{fbet} by the sequence 
of exponents $(m_q)_{q \geq 0}$ in
\begin{equation}
\label{succession}
f_{\beta}(z) = -1 + z + z^n + z^{m_1} + z^{m_2}
+ z^{m_3} + \ldots = G_{n}(z) +
\sum_{q \geq 1} z^{m_q} ,
\end{equation}
with $m_0 := n$ and the 
distanciation
condition:
$m_{q+1} - m_q \geq n-1 ~~
\mbox{for all}~ q \geq 0.$
Since we do not know exactly the sequence
$(m_q)$, the problem is to make a direct
test of the Rouch\'e condition 
using the inequality
\begin{equation}
\label{rourouche}
\left|f_{\beta}(z) - G_{n}(z)\right| =
\left|\sum_{q \geq 1} z^{m_q} \right|
< 
\left| G_{n}(z) \right|
\qquad 
\mbox{for} ~z \in C_{1,n},
\end{equation}
for some circle
$C_{j,n} := \{ z : |z - z_{j,n}|= 
\frac{t_{1,n}}{n}\}$
of center $z_{1,n}$ and radius 
$t_{1,n}/n > 0$ small enough. 
To overcome this difficulty,
we proceed by considering 
the general inequality:
$$\left|f_{\beta}(z) - G_{n}(z)\right|
 =
\left|\sum_{q \geq 1} z^{m_q} \right|
\leq \sum_{q \geq 1}   | z^{m_q}|
\leq \frac{|z|^{2(n-1) + 1}}{1-|z|^{n-1}} ,
\qquad \quad |z| < 1.
$$
Then we test the Rouch\'e condition 
by the following inequality:
\begin{equation}
\label{rourouchesimple}
\frac{|z|^{2 (n - 1)+1}}{1-|z|^{n-1}} =
\frac{|z|^{2n - 1}}{1-|z|^{n-1}}
~<~ 
\left| G_{n}(z) \right|
\quad 
\mbox{for} ~z \in C_{1,n},
\end{equation}
instead of \eqref{rourouche}, 
too complicated to handle.

The problem of the choice of the radius
$t_{1,n}/n$ is
a true problem. 
The circle $C_{1,n}$ 
should not intersect
the unit circle $z=1$, nor the real axis, 
and should not contain the second root
$z_{2,n}$ of $G_n$.
On one hand, a too small
radius would lead to make impossible
the application of the Rouch\'e condition.
On the other hand,
taking larger values of $t_{1,n}/n$ 
would readily lead to
a bad or impossible localization of the zero of
$f_{\beta}(z)$. 
Indeed, we do 
not know a priori whether the unit circle is 
a natural boundary or not
for $f_{\beta}(z)$;
locating zeroes close to a natural boundary 
is a difficult problem in general.

The radius of $C_{1,n}$ 
is chosen to be 
$$\frac{t_{1,n}}{n}
=
\frac{\pi |z_{1,n}|}{n \, a_{\max}}$$
where $a_{\max}$ is determined by
the following easy lemma.

\begin{lemma}
\label{h1amaximum}
The function
$$a \to h(1,a):=
\frac{\left|1 -\exp\bigl(\frac{\pi}{a}\bigr)
\right|
\exp\bigl(\frac{-\pi}{a}\bigr)}
{\exp\bigl(\frac{\pi}{a}\bigr) 
+\left|1 -\exp\bigl(\frac{\pi}{a}\bigr)
\right|}$$
defined on $[1,+\infty)$
reaches its maximum $h(1,a_{\max}):= 0.171784\ldots$
at $a_{\max} = 5.8743\ldots$
(Figure \ref{h1a}).
\end{lemma}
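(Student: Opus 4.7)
The plan is to reduce this to an elementary one-variable calculus problem by substituting $u = \pi/a$. As $a$ runs over $[1, +\infty)$, the variable $u$ runs over $(0, \pi]$. Since $u \geq 0$ implies $e^{u} \geq 1$, the absolute value simplifies as $|1 - e^{u}| = e^{u} - 1$, and one gets
\begin{equation*}
h(1, a) \;=\; g(u) \;:=\; \frac{(e^{u}-1)e^{-u}}{e^{u} + e^{u} - 1} \;=\; \frac{1 - e^{-u}}{2e^{u} - 1}.
\end{equation*}
The problem is now to maximize $g$ on $(0,\pi]$.

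Next I would compute the derivative. A direct calculation gives
\begin{equation*}
g'(u) \;=\; \frac{4 - e^{-u} - 2e^{u}}{(2e^{u}-1)^{2}},
\end{equation*}
so critical points satisfy $2e^{u} + e^{-u} = 4$. Setting $v = e^{u}$, this is the quadratic $2v^{2} - 4v + 1 = 0$, whose two roots are $v_{\pm} = 1 \pm \tfrac{\sqrt{2}}{2}$. Only $v_{+} = 1 + \tfrac{\sqrt{2}}{2}$ corresponds to a value $u > 0$, namely $u_{\max} = \log(1 + \tfrac{\sqrt{2}}{2}) \approx 0.5348$, which lies in $(0,\pi)$. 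The corresponding value of $a$ is $a_{\max} = \pi / u_{\max} \approx 5.8743$, matching the statement.

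To confirm this critical point is the global maximum on $(0,\pi]$, I would argue by sign analysis of $g'$. The function $u \mapsto 2e^{u} + e^{-u}$ is strictly convex with minimum at $u=\tfrac{1}{2}\log 2 > 0$ and value $2\sqrt{2} < 4$, and it is increasing on $[\tfrac{1}{2}\log 2, +\infty)$. Hence the equation $2e^{u} + e^{-u} = 4$ has exactly one solution in $(0,\pi]$, namely $u_{\max}$; moreover $g'(u) > 0$ on $(0, u_{\max})$ and $g'(u) < 0$ on $(u_{\max}, \pi]$. Combined with $g(0^{+}) = 0 > -\infty$ and $g(\pi) > 0$, this shows $u_{\max}$ is the unique interior maximum and hence the global maximum on $(0,\pi]$.

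Finally I would compute the maximum value in closed form. With $v = 1 + \tfrac{\sqrt{2}}{2}$,
\begin{equation*}
g(u_{\max}) \;=\; \frac{v-1}{v(2v-1)} \;=\; \frac{\sqrt{2}-1}{1+\sqrt{2}} \;=\; 3 - 2\sqrt{2} \;=\; 0.17157\ldots,
\end{equation*}
after rationalizing. There is no real obstacle in this proof; the only mildly delicate point is the monotonicity argument ruling out a second critical point, which is handled by the convexity of $u \mapsto 2e^{u} + e^{-u}$.
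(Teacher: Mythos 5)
Your proof is correct and complete, and in fact it supplies the argument that the paper omits: the paper states Lemma~\ref{h1amaximum} as an ``easy lemma'' with only a reference to Figure~\ref{h1a}, giving no proof at all. Your substitution $u=\pi/a$, the observation that $|1-e^{u}|=e^{u}-1$ on $(0,\pi]$, the derivative computation leading to $2e^{u}+e^{-u}=4$, the convexity argument ruling out any second positive critical point, and the resulting closed forms
\[
u_{\max}=\log\Bigl(1+\tfrac{1}{\sqrt{2}}\Bigr),\qquad a_{\max}=\frac{\pi}{\log\bigl(1+\tfrac{1}{\sqrt{2}}\bigr)}=5.8743\ldots,\qquad h(1,a_{\max})=3-2\sqrt{2},
\]
are all accurate. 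One thing worth flagging explicitly: your value $3-2\sqrt{2}=0.171573\ldots$ does not agree with the paper's stated maximum $0.171784\ldots$; your number is the correct one. Indeed, the downstream quantity used in the proof of Theorem~\ref{_cercleoptiSALEM}, namely $\kappa(1,a_{\max})/(1+\kappa(1,a_{\max}))=0.146447\ldots$, equals $\frac{3-2\sqrt{2}}{4-2\sqrt{2}}=\frac{2-\sqrt{2}}{4}=0.146447\ldots$, which is consistent only with $\kappa(1,a_{\max})=3-2\sqrt{2}$, so the decimal $0.171784$ in the lemma is a small numerical typo. The chain $\frac{v-1}{v(2v-1)}=\frac{\sqrt{2}-1}{1+\sqrt{2}}$ in your final step is a bit terse (the cleanest route is $\frac{v-1}{v(2v-1)}=\frac{1}{(\sqrt{2}+1)^{2}}=3-2\sqrt{2}$), but the identity is true and the conclusion stands.
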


\begin{figure}
\begin{center}
\includegraphics[width=6cm]{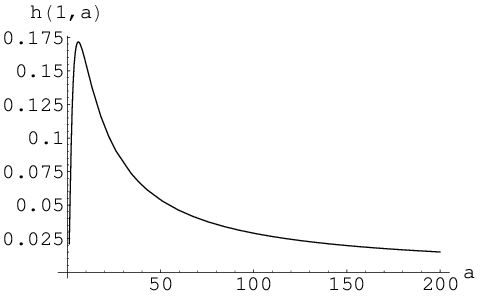}
\end{center}
\caption{
Curve of the Rouch\'e condition $a \to h(1,a)$.
}
\label{h1a}
\end{figure}

\newpage

\begin{theorem}
\label{_cercleoptiSALEM}
Let $n \geq 32$.
Denote by $C_{1,n}
:= \{z \mid |z-z_{1,n}| = 
\frac{\pi |z_{1,n}|}{n \, a_{\max}} \}$
the circle centered at the first root
$z_{1,n}$ of $G_{n}(z) = -1 + z + z^n$.
Then the 
condition of Rouch\'e 
\begin{equation}
\label{rouchecercleSALEM}
\frac{
\left|z\right|^{2 n -1}}{1 - |z|^{n-1}}
~<~
\left|-1 + z + z^n \right| , 
\qquad \mbox{for all}~ z \in C_{1,n},
\end{equation}
holds true. 
\end{theorem}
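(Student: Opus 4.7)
My plan is to verify the Rouché inequality pointwise on $C_{1,n}$ by exploiting the identity $z_{1,n}^n = 1 - z_{1,n}$ (which follows from $G_n(z_{1,n}) = 0$) to rewrite both sides in a scale-invariant form. Parametrize $z = z_{1,n}(1+u)$ with $|u| = \pi/(n a_{\max})$ as $z$ ranges over $C_{1,n}$. Then $z^n = (1-z_{1,n})(1+u)^n$, giving the clean algebraic decomposition
$$G_n(z) = (1-z_{1,n})\bigl[(1+u)^n - 1\bigr] + z_{1,n}\, u.$$

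For the upper bound on the left-hand side, note that $|z|^{2n-1}/(1-|z|^{n-1})$ is increasing in $|z|$, so its maximum on $C_{1,n}$ is attained at $z = z_{1,n}(1 + \pi/(na_{\max}))$. Using the elementary inequality $(1+\pi/(na_{\max}))^n \leq e^{\pi/a_{\max}}$ together with $|z_{1,n}|^n = |1-z_{1,n}|$, one obtains
$$\frac{|z|^{2n-1}}{1-|z|^{n-1}} \,\leq\, \frac{|1-z_{1,n}|^2\, e^{2\pi/a_{\max}}}{|z_{1,n}|- |1-z_{1,n}|\, e^{\pi/a_{\max}}}.$$
For the lower bound on the right-hand side, I would bound $|G_n(z)|$ from below pointwise using the decomposition above. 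At the worst direction $\psi := \arg u = 0$ (where the LHS is maximal), $(1+u)^n - 1$ is positive real, close to $e^{\pi/a_{\max}} - 1$, and the cross term between $(1-z_{1,n})[(1+u)^n-1]$ and $z_{1,n}u$ is constructive, so $|G_n(z)|$ exceeds the naive linear-in-$|1-z_{1,n}|$ bound.

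Combining and writing $\kappa := |1-z_{1,n}|/|z_{1,n}|$, the Rouché inequality reduces, to leading order, to the scalar condition $\kappa < h(1, a_{\max}) = 3 - 2\sqrt{2}$. The choice $a = a_{\max}$ in the definition of the radius of $C_{1,n}$ is optimal precisely because it maximizes $h(1,a)$ by Lemma \ref{h1amaximum}. To finish, I apply the asymptotic expansions of $z_{1,n}$, $\theta_n$, $\lambda_n$ from Proposition \ref{zedeUNmodule}, Proposition \ref{thetanExpression} and Lemma \ref{remarkthetan} to bound $\kappa$ from above and verify the inequality for all $n \geq 32$.

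The main obstacle is that $n = 32$ is nearly critical: a crude leading-order asymptotic gives $\kappa$ only marginally below $h(1, a_{\max}) \approx 0.1716$ (and a careless analysis may even appear to miss it). Therefore the proof must retain the sub-leading contribution from $z_{1,n} u$ at $\psi = 0$ and the genuine factor $1 - |z|^{n-1}$ (which is noticeably smaller than $1$ when $n$ is moderate). In practice, sharp asymptotic estimates for large $n$ combined with careful bookkeeping of the explicit error terms in Proposition \ref{zedeUNmodule} — possibly supplemented by direct verification for a finite range of $n$ close to $32$ — is likely the cleanest path.
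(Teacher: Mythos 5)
Your proposal tracks the paper's proof essentially step for step: the same parametrization $z=z_{1,n}(1+u)$ with $|u|=\pi/(n\,a_{\max})$, the same use of $z_{1,n}^n=1-z_{1,n}$ to expand $G_n(z)$, the same reduction of the pointwise Rouch\'e condition to a scalar inequality at the worst direction $\cos(\psi-\varphi)=1$ (what the paper calls $X=1$, a step you assert and the paper justifies by showing the auxiliary function $\kappa(X,a)$ is strictly decreasing on $[-1,1]$), the same optimization of the radius via Lemma~\ref{h1amaximum}, and the same appeal to the asymptotics of $z_{1,n}$ together with numerical verification for moderate $n$. One small improvement you offer: the closed form $h(1,a_{\max})=3-2\sqrt{2}$ (attained at $e^{\pi/a_{\max}}=1+1/\sqrt{2}$), which is consistent with the paper's derived threshold $\kappa(1,a_{\max})/(1+\kappa(1,a_{\max}))=(2-\sqrt{2})/4=0.146447\ldots$, and suggests the paper's quoted decimal $0.171784\ldots$ is a minor numerical slip for $0.171573\ldots$.
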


\begin{proof}
Let $a \geq 1$ and $n \geq 18$.
Denote by $\varphi := \arg (z_{1,n})$
the argument of the first root
$z_{1,n}$ (in ${\rm Im}(z) > 0$).  
Since $-1 + z_{1,n} + z_{1,n}^n = 0$, 
we have $|z_{1,n}|^n = |-1 + z_{1,n}|$.
Let us write $z= z_{1,n}+ 
\frac{\pi |z_{1,n}|}{n \, a} e^{i \psi}
=
z_{1,n}(1 + \frac{\pi}{a \, n} e^{i (\psi - \varphi)})$
the generic element belonging to 
$C_{1,n}$, with
$\psi \in [0, 2 \pi]$.
Let $X := \cos(\psi - \varphi)$.
Let us show that if the inequality
\eqref{rouchecercleSALEM} of Rouch\'e 
holds true for $X =+1$,
then it holds true
for all $X \in [-1,+1]$, that is for 
every argument $\psi \in [0, 2 \pi]$,
i.e. for every 
$z \in C_{1,n}$.
We have
$$
\left|1 + \frac{\pi}{a \,n} e^{i (\psi - \varphi)}
\right|^{n}
=
\exp\Bigl(
\frac{\pi \, X}{a}\Bigr)
\times 
\left(
1 - \frac{\pi^2}{2 a^2 \, n} (2 X^2 -1) 
+O(\frac{1}{n^2})
\right) 
$$
and
$$
\arg\left(
\Bigl(1 + \frac{\pi}{a \, n} e^{i (\psi - \varphi)}
\Bigr)^{n}\right)
=
sgn(\sin(\psi - \varphi))
\times
\left( ~\frac{\pi \, \sqrt{1-X^2}}{a}
[1 -
\frac{\pi \, X}{a \, n}
]
+O(\frac{1}{n^2})
\right)
.$$
Moreover,
$$
\left|1 + \frac{\pi}{a \, n} 
e^{i (\psi - \varphi)}
\right|
=
\left|1 + \frac{\pi}{a \, n} 
(X \pm i \sqrt{1-X^2})
\right|
=1 + \frac{\pi \, X}{a \, n} + O(\frac{1}{n^2}).
$$
with
$$\arg(1 + \frac{\pi}{a \, n} e^{i (\psi - \varphi)})
= 
sgn(\sin(\psi - \varphi)) \times
\frac{\pi \sqrt{1 - X^2}}{a \, n} 
+ O(\frac{1}{n^2}).
$$
For all $n \geq 18$, from
Proposition \ref{zedeUNmodule}, 
we have
\begin{equation}
\label{devopomainSALEM}
|z_{1,n}|
= 1 - \frac{\lo n - \lo \lo n}{n}
+ \frac{1}{n} O \left(
\frac{\lo \lo n}{\lo n}\right).
\end{equation}
from which we deduce the following equality, 
up to $O(\frac{1}{n})$ - terms,
$$
|z_{1,n}|
\,
\left|1 + \frac{\pi}{a \, n} e^{i (\psi - \varphi)}\right|
=
|z_{1,n}| .
$$
Then the left-hand side term of \eqref{rouchecercleSALEM}
is
$$\frac{
\left|z\right|^{2 n -1}}{1 - |z|^{n-1}}
=
\frac{|-1 + z_{1,n}|^2 
\left|1 + \frac{\pi}{a \, n} e^{i (\psi - \varphi)}
\right|^{2 n}}
{|z_{1,n}| \, 
\left|1 + \frac{\pi}{a \, n} e^{i (\psi - \varphi)}\right|
-
|-1 + z_{1,n}| \,
\left|1 + \frac{\pi}{a \, n} e^{i (\psi - \varphi)}\right|^{n}}$$

\begin{equation}
\label{rouchegaucheSALEM}
=
\frac{|-1 + z_{1,n}|^2 
\left(
1 - \frac{\pi^2}{a \, n} (2 X^2 -1) 
\right)
\exp\bigl(
\frac{2 \pi \, X}{a}\bigr)
}
{|z_{1,n}|
\left|1 + \frac{\pi}{a \, n} e^{i (\psi - \varphi)}\right|
-
|-1 + z_{1,n}| \,
\left(
1 - \frac{\pi^2}{2 a \, n} (2 X^2 -1) 
\right)
\exp(
\frac{\pi \, X}{a})
}
\end{equation}
up to
$\frac{1}{n} 
O \left(
\frac{\lo \lo n}{\lo n}
\right)$
-terms (in the terminant).
The right-hand side term of 
\eqref{rouchecercleSALEM} is 
$$\left|-1 + z + z^n \right|
=
\left|
-1 + z_{1,n}\Bigl(1 + \frac{\pi}{n \, a} e^{i (\psi - \varphi)}\Bigr) +
z_{1,n}^{n}
\Bigl(1 + \frac{\pi}{n \, a} e^{i (\psi - \varphi)}
\Bigr)^n
\right|
$$
$$=
\left|
-1 + z_{1,n}
(1 \pm i \frac{\pi \sqrt{1 - X^2}}{a \, n})
(1 + \frac{\pi \, X}{a \, n})
+
(1 - z_{1,n})
\left(
1 - \frac{\pi^2}{2 a^2 \, n} (2 X^2 -1) 
\right)
\right. \hspace{3cm} \mbox{}
$$
\begin{equation}
\label{rouchedroiteSALEM}
\left.
\times
\exp\bigl(
\frac{\pi \, X}{a}
\bigr) \,
\exp\Bigl(
\pm \,
i \,
\Bigl( ~\frac{\pi \, \sqrt{1-X^2}}{a}
[1 - \frac{\pi \, X}{a \, n}] 
\Bigr)
\Bigr)
+ O(\frac{1}{n^2})
\right|
\end{equation}
Let us consider
\eqref{rouchegaucheSALEM}
and
\eqref{rouchedroiteSALEM}
at the first order for the asymptotic expansions, 
i.e. up to $O(1/n)$ - terms instead of
up to 
$O(\frac{1}{n}(\lo \lo n/ \lo n))$ - terms or
$O(1/n^2)$ - terms.
\eqref{rouchegaucheSALEM} becomes:
$$\frac{|-1+z_{1,n}|^2 \exp(\frac{2 \pi X}{a})}
{|z_{1,n}| - |-1+z_{1,n}| \exp(\frac{\pi X}{a})}$$
and \eqref{rouchedroiteSALEM} is equal to:
$$|-1 + z_{1,n}|
\left|
1 -
\exp\bigl(
\frac{\pi \, X}{a}
\bigr) \,
\exp\Bigl(
\pm \,
i \,
\frac{\pi \, \sqrt{1-X^2}}{a} 
\Bigr)
\right|
$$
and is independent of the sign of 
$\sin(\psi - \varphi)$.
Then
the inequality \eqref{rouchecercleSALEM} is 
equivalent to
\begin{equation}
\label{roucheequiv1SALEM}
\frac{|-1+z_{1,n}|^2 \exp(\frac{2 \pi X}{a})}
{|z_{1,n}| - |-1+z_{1,n}| \exp(\frac{\pi X}{a})}
<
|-1+z_{1,n}| \, 
\left|
1 -
\exp\bigl(
\frac{\pi \, X}{a}
\bigr) \,
\exp\Bigl(
\pm \,
i \,
\frac{\pi \, \sqrt{1-X^2}}{a} 
\Bigr)
\right|
,
\end{equation}
and \eqref{roucheequiv1SALEM} to
\begin{equation}
\label{amaximalfunctionXSALEM}
\frac{|-1 + z_{1,n}|}{|z_{1,n}|}
~  < ~ \, 
\frac{\left|
1 -
\exp\bigl(
\frac{\pi \, X}{a}
\bigr) \,
\exp\Bigl(
 \,
i \,
\frac{\pi \, \sqrt{1-X^2}}{a} 
\Bigr)
\right|
\exp\bigl(
\frac{-\pi \, X}{a}
\bigr)}{\exp\bigl(
\frac{\pi \, X}{a}
\bigr) +\left|
1 -
\exp\bigl(
\frac{\pi \, X}{a}
\bigr) \,
\exp\Bigl(
 \,
i \,
\frac{\pi \, \sqrt{1-X^2}}{a} 
\Bigr)
\right|} = \kappa(X,a).
\end{equation}
The right-hand side function
$\kappa(X,a)$ is 
a function of $(X, a)$, 
on $[-1, +1] \times [1, +\infty)$.
which is strictly decreasing for any
fixed $a$, 
and reaches its minimum
at $X=1$; this minimum is always 
strictly positive. 
Consequently 
the inequality of Rouch\'e
\eqref{rouchecercleSALEM} will be satisfied
on $C_{1,n}$ once it is 
satisfied at $X = 1$, as claimed.

Hence, up to
$O(1/n)$-terms, the Rouch\'e condition
\eqref{amaximalfunctionXSALEM}, 
for any fixed $a$,
will be satisfied (i.e. for any
$X \in [-1,+1]$)
by the set of integers
$n = n(a)$ for which $z_{1,n}$
satisfies:
\begin{equation}
\label{amaximalfunctionSALEM}
\frac{|-1 + z_{1,n}|}{|z_{1,n}|} 
< \kappa(1,a) 
=
\frac{\left|
1 -
\exp\bigl(
\frac{\pi}{a}
\bigr)
\right|
\exp\bigl(
\frac{-\pi}{a}
\bigr)}{\exp\bigl(
\frac{\pi}{a}
\bigr) +\left|
1 -
\exp\bigl(
\frac{\pi}{a}
\bigr)
\right|} ,
\end{equation}
equivalently, from Proposition 
\ref{zedeUNmodule},
\begin{equation}
\label{amaximalfunctionnnSALEM}
\frac{\lo n - \lo \lo n}{n} 
< \frac{\kappa(1,a)}
{1 + \kappa(1,a)} .
\end{equation}
In order to 
obtain 
the largest possible range 
of values of $n$,
the value of $a \geq 1$ has to be chosen
such that $a \to \kappa(1,a)$ is maximal
in \eqref{amaximalfunctionnnSALEM}.
From Lemma \ref{h1amaximum} 
we take $a = a_{\max}$.

The slow decrease
of the functions of the variable $n$
involved in 
the terminants when $n$ tends to infinity,
as a factor of uncertainty on
\eqref{amaximalfunctionnnSALEM},
has to be taken into account
in \eqref{amaximalfunctionnnSALEM}.
It amounts to check 
numerically 
whether \eqref{rouchecercleSALEM} 
is satisfied 
for the small values
$18 \leq n \leq 100$
for $a = a_{\max}$, or not. 
Indeed, for the
large enough values of $n$,
the inequality
\eqref{amaximalfunctionnnSALEM}
is satisfied since
$\lim_{n \to \infty}
\frac{\lo n - \lo \lo n}{n} = 0$.
On the computer, 
the critical threshold
of $n = 32$  
is easily calculated, with
$(\lo 32 - \lo \lo 32)/32 = 
0.0694628\ldots$.
Then
$$\frac{\lo n - \lo \lo n}{n}
<
\frac{\kappa(1,a_{\max})}
{1 + \kappa(1,a_{\max})} = 0.146447\ldots\qquad
\mbox{for all
$n \geq 32$} .$$
Let us note that
the last inequality
also holds for some values of
$n$ less
than $ 32$.
\end{proof}

\begin{corollary}
\label{smallestSALEM}
Let $\beta > 1$ be a Salem number
such that
$\dyg(\beta) \geq 32$ (if any). Then
the Parry Upper function
$f_{\beta}(z)$
admits a simple nonreal zero 
of modulus
$< 1$ in the 
open disk $\{z : |z-z_{1,n}|< 
\frac{\pi |z_{1,n}|}{n \, a_{\max}}\}$.
\end{corollary}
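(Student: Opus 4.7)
The plan is to apply Rouch\'e's theorem on the circle $C_{1,n}$ to the decomposition $f_{\beta}(z) = G_{n}(z) + \sum_{q \geq 1} z^{m_q}$ of \eqref{succession}, with $n = \dyg(\beta) \geq 32$. Since $\beta$ is a Salem number, hence a reciprocal algebraic integer, Proposition \ref{fbetainfinie} guarantees that $d_\beta(1)$ is infinite and that the exponents in \eqref{fbet} satisfy $m_0 = n$ together with the minimal gappiness condition $m_{q+1} - m_q \geq n-1$ for all $q \geq 0$. A one-line induction gives $m_q \geq n + q(n-1)$, from which one deduces the geometric majoration
$$\left|\sum_{q \geq 1} z^{m_q}\right| \leq \sum_{q \geq 1} |z|^{n + q(n-1)} = \frac{|z|^{2n-1}}{1 - |z|^{n-1}},$$
valid for every $z$ with $|z| < 1$.

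Next I would invoke Theorem \ref{_cercleoptiSALEM}: for $n \geq 32$ the above right-hand side is strictly bounded above by $|G_{n}(z)|$ on the circle $C_{1,n}$. Combining the two inequalities yields $|f_\beta(z) - G_n(z)| < |G_n(z)|$ pointwise on $C_{1,n}$, and Rouch\'e's theorem then asserts that $f_\beta$ and $G_n$ possess the same number of zeros, counted with multiplicity, inside the open disk $D_{1,n}$ bounded by $C_{1,n}$.

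To conclude, I would use Proposition \ref{closetoouane}, together with the asymptotic expansions of $z_{1,n}$ and $\theta_n$ in Propositions \ref{zedeUNmodule} and \ref{thetanExpression}, to verify that $D_{1,n}$ contains exactly one root of $G_n$, namely $z_{1,n}$, and that $D_{1,n}$ lies strictly inside the open unit disk, in the open upper half-plane, and is separated from the real root $\theta_n$ and from the second root $z_{2,n}$. The radius $\pi |z_{1,n}| / (n\, a_{\max})$ is of order $\pi/(a_{\max}\, n) \approx 0.535/n$, while $\mathrm{Im}(z_{1,n})$ is of order $2\pi/n$ by Proposition \ref{zedeUNmodule}, so the disk avoids $\mathbb{R}$; the separation from $\theta_n$ and $z_{2,n}$ follows from Proposition \ref{closetoouane}(iii-1)(iii-2) once $n \geq 32$. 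Since $z_{1,n}$ is a simple root of $G_n$, $f_\beta$ admits in $D_{1,n}$ exactly one zero, and that zero is simple, non-real, and of modulus $< 1$, as required.

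The main obstacle has already been absorbed into Theorem \ref{_cercleoptiSALEM}, where the delicate choice of the radius $\pi |z_{1,n}|/(n\, a_{\max})$ is made so that the Rouch\'e inequality holds uniformly on $C_{1,n}$; the present corollary is essentially a packaging step. The only subtlety worth emphasizing is the geometric verification that the Rouch\'e disk isolates $z_{1,n}$ from $\theta_n$, from $z_{2,n}$, and from the real axis, which is immediate from the explicit asymptotic expansions in Subsection \ref{Gnroots} as soon as $n \geq 32$.
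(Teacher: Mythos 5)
Your proof is correct and follows essentially the same route as the paper: apply Rouch\'e's theorem on $C_{1,n}$ with the split $f_\beta = G_n + \sum_{q\ge 1} z^{m_q}$, bound the tail geometrically via the minimal gappiness condition, invoke Theorem~\ref{_cercleoptiSALEM} for the Rouch\'e inequality, and conclude that $f_\beta$ has a unique simple zero in the disk. The paper's own proof of the corollary is terser -- it simply cites the Rouch\'e inequality of Theorem~\ref{_cercleoptiSALEM} and asserts the unique simple zero -- whereas you additionally carry out the geometric checks (the disk lies strictly inside the unit circle, in the open upper half-plane, and is separated from $\theta_n$ and $z_{2,n}$) that are needed to conclude the zero is \emph{nonreal} and of modulus $<1$; these checks are left implicit in the paper, and making them explicit via Propositions~\ref{closetoouane} and~\ref{zedeUNmodule} is a genuine improvement in rigor, not a different approach.
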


\begin{proof}
The polynomial $G_{n}(z)$ has
simple roots.
Since
\eqref{rouchecercleSALEM} is satisfied, 
the Theorem of Rouch\'e states that
$f_{\beta}(z)$
and
$G_{n}(z) = -1 + z + z^n$
have the same number of roots, 
counted with
multiplicities, in the open disk
$\{z : |z-z_{1,n}|< 
\frac{\pi |z_{1,n}|}{n \, a_{\max}}\}$.
We deduce the existence of an unique simple zero
of $f_{\beta}(z)$ inside this disk.
\end{proof}

Let us denote by $\omega_{1,n}$ this zero of 
$f_{\beta}(z)$. To summarize, for
$n \geq 32$, 
$\theta_{n}^{-1} < \beta < \theta_{n-1}^{-1}$,
taking the 
complex-conjugates, we have the 2 tri-uplets 
(represented schematically in Fig. \ref{example649}):
$$\{\overline{z_{1,n}}, \theta_{n}, z_{1,n}\},~~ 
\{\overline{\omega_{1,n}}, \beta^{-1}, \omega_{1,n}\}$$
which are very close in a narrow symmetrical
angular sector
containing $\{z=1\}$ and satisfy:
$$f_{\beta}(1/\beta) =
 f_{\beta}(\omega_{1,n}) =
 f_{\beta}(\overline{\omega_{1,n}}) = 0$$
 and
$$ f_{\theta_{n}^{-1}}(\theta_n) =
f_{\theta_{n}^{-1}}(z_{1,n})=
f_{\theta_{n}^{-1}}(\overline{z_{1,n}}) =0.
 $$
In the sequel we prove that $\omega_{1,n}$
is a conjugate of $\beta^{-1}$, then of $\beta$.

\begin{figure}[h]
\begin{center}
a)
\includegraphics[width=3cm]{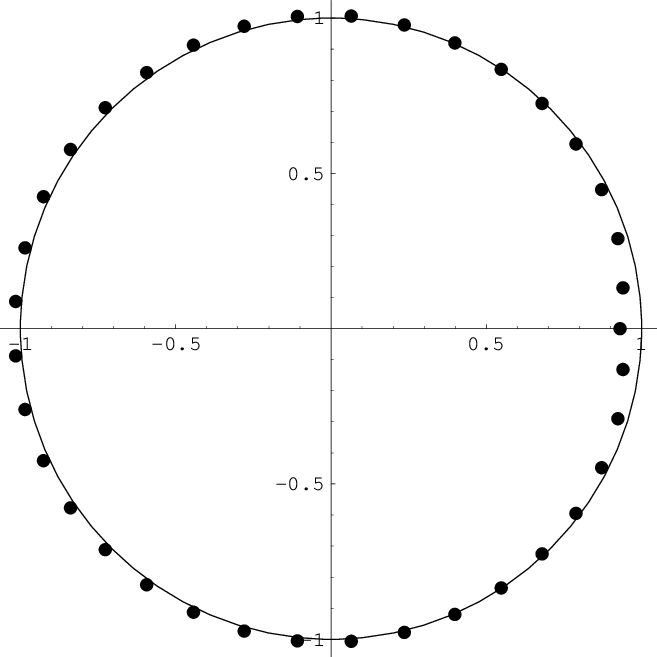}
b)
\includegraphics[width=3cm]{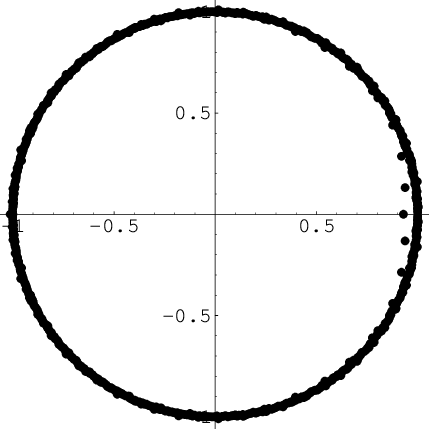}
\end{center}
\caption{{\small
In a) and b) the zeroes
of $G_n$ and $f_{\beta}$, resp., are represented by 
black bullets, with lenticularity appearing
(symmetrically with respect to $\rb$)
in the angular sector $|\arg z| < \pi/18$, 
about the unit circle in $\cb$. In a)
the first zero $z_{1,n}$ of $G_n$ (here $n=37$) is
the first one in this sector having positive
imaginary part. 
In b) the zero
$\omega_{1,n}$ 
is obtained by a 
very slight deformation of 
$z_{1,n}$
according to Theorem \ref{_cercleoptiSALEM}.
The other roots 
of $f_{\beta}$ can be found in a narrow annular 
neighbourhood of 
$|z|=1$.}}
\label{example649}
\end{figure}

\subsection{Rewriting trails}
\label{S4.2}

Let us assume that
$f_{\beta}(\omega_{1,n}) =0$
with the property that the minimal polynomial
$P_{\beta}$ of $\beta$
satisfies $P_{\beta}(\omega_{1,n})\neq 0$,
and show the contradiction.
Denote $\nu := |P_{\beta}(\omega_{1,n})| > 0$.
Let us consider
the $s$-th polynomial section
$S_{s}(z)=-1 + \sum_{j=1}^{s} t_j z^j$ 
of $f_{\beta}(z)$, where the integer
$s$, taken large enough, will be
fixed below. The vector coefficient
$(t_j)$ is as in \eqref{succession}.

The $s$-th polynomial section
$S_{s}(z)$ admits a unique real zero
in $(0,1)$.
Indeed $S_{s}(0)=-1$,
$S_{s}(1) > 1$, and
the derivative  of the restriction
of $S_{s}(z)$ to $[0,1]$ is positive
on $[0,1]$.
The polynomial
$S^{*}_{s}(z)$, reciprocal polynomial
of $S_{s}(z)$, admits a unique
real zero, say 
$\gamma_s, > 1$. 
We have: $\deg(S_s) \leq s$
and $\lim_{s \to \infty} 
\gamma_{s}^{-1} = \beta^{-1}$.
The real number
$\gamma_{s}$
is a nonreciprocal
algebraic integer which is such that 
$1 < \gamma_s < \beta$:
indeed, $y \to S_{s}(y)$ is strictly increasing
on $(0,1)$ and 
$S_{s}(\beta^{-1}) = 
-1 + \sum_{j=1}^{s} t_j \beta^{-j}
=
f_{\beta}(\beta^{-1}) 
-\sum_{j=s+1}^{\infty} t_j \beta^{-j}
=-\sum_{j=s+1}^{\infty} t_j \beta^{-j} <0
$,
so that $\beta^{-1} < \gamma_{s}^{-1}$.
There exists an integer, say $W_{\nu}$,
such that: 
$s \geq W_{\nu} \Longrightarrow
|P_{\beta}(\gamma_{s}^{-1})| < 
\min\{1, \nu/2\}.$
In the following we take
$s \geq W_{\nu}$.

Before going further, let us recall the 
general factorization 
of a Parry Upper function $f_{\alpha}(z)$
for 
$\alpha$ any simple Parry number.

\begin{theorem}
\label{nonreciprocalpart}
Let $\alpha$ be a simple Parry number,
with $\dyg(\alpha) \geq 3$.
Let
$f_{\alpha}$ 
denote its Parry Upper function with factorization
in $\zb[x]$:
$$f_{\alpha}(x) = A(x) B(x) C(x) =
-1 + x + x^n +
x^{m_1} + x^{m_2} + \ldots + x^{m_s},$$
where $s \geq 1$, $m_1 - n \geq n-1$, 
$m_{j+1}-m_j \geq n-1$ for $1 \leq j < s$,
where 
\begin{itemize}
\item $A$ is 
the cyclotomic component,  
\item $B$ the reciprocal noncyclotomic component,
\item $C$ the nonreciprocal part.
\end{itemize}
Then $C$ is irreducible. Moreover
$C$ has no root of modulus 1.
\end{theorem}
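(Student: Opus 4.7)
The plan is to split the theorem into two claims — (a) $C$ is irreducible, and (b) $C$ has no root of modulus $1$ — and to derive (b) from (a). Assuming (a), claim (b) reduces to the classical observation that any irreducible $P\in\mathbb{Z}[x]$ with a root on the unit circle must be reciprocal: if $|z|=1$ is a root of such a $P$, then $\bar z=1/z$ is also a root by real-coefficient symmetry, so the involution $\xi\mapsto 1/\xi$ sends roots of $P$ to roots of $P$; Galois-transporting this relation yields a permutation of the entire root set of $P$, forcing $P=\pm P^{*}$. Since $C$ is non-reciprocal by construction, once (a) is established it cannot carry a root on the unit circle.

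For claim (a), the structural features of $f_\alpha$ to exploit are that its coefficient vector lies in $\{-1,0,1\}$ with exactly one sign change (at the constant term $-1$) and that, by Descartes' rule of signs, $f_\alpha$ possesses a unique positive real root. By Theorem~\ref{parryupperdynamicalzeta} this root is the simple zero $1/\alpha$, and it is the zero of $f_\alpha$ of smallest modulus. Let $P_0\in\mathbb{Z}[x]$ be the monic irreducible factor of $f_\alpha$ with $P_0(1/\alpha)=0$. If $P_0$ is reciprocal, it is either cyclotomic (a factor of $A$) or divides $B$; otherwise $P_0\mid C$. In either case the goal is the following uniqueness and multiplicity statement: every non-reciprocal irreducible factor of $f_\alpha$ in $\mathbb{Z}[x]$ coincides with $P_0$ (up to sign) and appears with multiplicity one. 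Granting this, one concludes $C=\pm P_0$, which is irreducible, finishing (a).

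The main obstacle is exactly this uniqueness claim, and I would approach it by contradiction. Suppose a second non-reciprocal irreducible factor $Q\mid f_\alpha$, distinct from $P_0$. Then $Q$ has no root of modulus $\leq 1/\alpha$ (since $1/\alpha$ is the smallest-modulus zero of $f_\alpha$, with $P_0$ accounting for it), no root on the unit circle (by the Galois argument above), and the reciprocal set $\{1/\bar\xi : Q(\xi)=0\}$ is disjoint from the zero set of $f_\alpha$. Combining the gap condition \eqref{minimalgappiness}, which sharply constrains the geometry of zeros of $f_\alpha$ near $|z|=1$, with Smyth's 1971 lower bound $\theta_0 = 1.3247\ldots$ on the Mahler measure of any non-reciprocal $\mathbb{Z}$-polynomial not vanishing at $0$ or $\pm 1$, applied to $Q$ through Jensen's formula, the plan is to overstuff the factorization $f_\alpha = A\cdot B\cdot P_0\cdot Q \cdots$ and extract a numerical contradiction. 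The delicate work lies in making this Smyth-type comparison quantitative; I would expect to need the explicit first-root asymptotics of Proposition~\ref{zedeUNmodule} or the interlacing-on-the-unit-circle apparatus of McKee--Smyth cited in Section~\ref{S1} to close the estimate.
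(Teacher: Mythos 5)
Your deduction of the second claim from the first is sound, and in fact slightly over-engineered: since by definition \emph{every} irreducible factor of $C$ is noncyclotomic and nonreciprocal, the standard argument (a root $z$ with $|z|=1$ forces $1/z=\bar z$ to also be a root, hence $P \mid P^{*}$, hence $P=\pm P^{*}$; the case $P=-P^{*}$ for primitive irreducible $P$ forces $P(1)=0$, i.e.\ $P=x-1$, which is cyclotomic) already shows $C$ has no root of modulus $1$ \emph{without} assuming $C$ irreducible. So the ``no root on the unit circle'' claim is cheap and your treatment of it is essentially fine, modulo the omitted anti-reciprocal edge case.

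The irreducibility of $C$, however, is the real content of the theorem, and your proposal does not prove it. You correctly reduce the problem to showing that the minimal polynomial $P_0$ of $1/\alpha$ is the unique nonreciprocal irreducible factor of $f_\alpha$ and has multiplicity one (the multiplicity-one part and the nonreciprocality of $P_0$ do follow, the latter from Proposition~\ref{fbetainfinie} since a simple Parry number cannot be a reciprocal algebraic integer, though you should say so rather than leave it as an unresolved alternative). But for the uniqueness itself you offer only a programme --- ``combine the gap condition with Smyth's bound through Jensen's formula and overstuff the factorization'' --- and you yourself flag that ``the delicate work lies in making this Smyth-type comparison quantitative.'' As stated, this cannot close: Smyth's bound gives $M(Q)\ge\theta_0=1.3247\ldots$ for each additional nonreciprocal noncyclotomic factor $Q$, so two such factors force $M(f_\alpha)\ge\theta_0^2\approx 1.75$, but $M(f_\alpha)$ is at least $\alpha$ and can be much larger than that, with further contributions from the reciprocal part $B$; nothing in your sketch caps $M(f_\alpha)$ from above in a way that rules out a second nonreciprocal factor. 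Some genuinely new structural input about almost-Newman lacunary polynomials satisfying the gap condition~\eqref{minimalgappiness} is needed, which is precisely what the cited works \cite{dutykhvergergaugry}, \cite{dutykhvergergaugry3} supply; the paper itself gives no proof beyond that citation, and your Smyth--Jensen route, even if it could be made to work, would be a different and currently unrealized argument.
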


\begin{proof}
\cite{dutykhvergergaugry}
\cite{dutykhvergergaugry3}.
\end{proof}
 
Then, inside the circle
$C_{1,n}$ the polynomial section
$S_{s}(z)$ has a unique zero
(same proof as for 
Theorem \ref{_cercleoptiSALEM});
let us denote it by
$r_s$. We have:
$\lim_{s \to \infty} r_s = 
\omega_{1,n}$
and $r_s$ is equal to
the conjugate
$\sigma_{s}(\gamma_{s}^{-1})$
of $\gamma_{s}^{-1}$
for some $\sigma_s$ which is the conjugation
relative to the irreducible nonreciprocal
(never trivial) part $C$ of $S_s$ by 
Proposition \ref{nonreciprocalpart}.
We have: $C(\gamma_{s}^{-1}) =
C(r_s) =0$.
Denote by $s_c = \deg(C)$ the degree
of the component $C$.
The irreducible 
polynomials $P_{\beta}(X)$ and
$C(X)$ are coprime: indeed 
the first one is reciprocal
while the second one is nonreciprocal.
The integer $s_c$ is a function of $s$.
Denote
$$d := \deg (P_{\beta})
\qquad {\rm and} \qquad
H:= \max_{j =1, \ldots, d-1} \{|a_j|\} \geq 1$$
the (na\"ive) height of $P_{\beta}(X)
= 1 + \sum_{j=1}^{d-1} a_j X^j + X^d$.

\bigskip 
 
Let us now define the notion of {\em rewriting trail}
and justify its use. On one side, the Salem
number $\beta$ is entirely determined by
its minimal polynomial $P_{\beta}(X)$, from
Commutative Algebra. On the other side,
$\beta$ is completely determined by the 
Parry Upper function
$f_{\beta}(z)$ coming from the dynamical system. 
A priori, the two analytic functions
$z \to P_{\beta}(z)$ and $z \to f_{\beta}(z)$
have very different coefficient vectors, the second
one never being a polynomial, by
Proposition \ref{fbetainfinie}. But the 
``object" $\beta$ is the same in both cases.

These two functions give rise to
two distinct $\beta$-representations, 
from the equations:

$$P_{\beta}(1/\beta) = P_{\beta}(\beta)=0 \qquad 
\qquad \mbox{by the minimal polynomial}$$
and
$$f_{\beta}(1/\beta)=0 \qquad  
 \mbox{by the numeration dynamical system and}~ \zeta_{\beta}(z).$$
To prove that $\omega_{1,n}$ is a conjugate
of $\beta^{-1}$, to pass from the first 
$\beta$-representation to the second 
$\beta$-representation is needed. We cannot proceed
directly, and a sequence of intermediate
$\beta$-representations is required.

In this subsection, below, we
substitute $\beta$ by $\gamma_s$ and 
$f_{\beta}(z)$ by $S_{s}(z)$, and construct 
step by step the intermediate
$\gamma_s$-representations.
These intermediate
$\gamma_s$-representations constitute a 
rewriting trail. There is no unicity.
Then, in the next subsection,
the limit when $s$ tends to infinity 
is taken to conclude.
 
 \medskip
 
Let us construct the
rewriting trail  
from ``$S_{s}$" to
``$P_{\beta}$", at $\gamma_{s}^{-1}$
\cite{dutykhvergergaugry2}.
The starting point is the identity
$1 = 1$, to which we add 
$0=  S_{\gamma_{s}}(\gamma_{s}^{-1})$
in the (rhs) right hand side.
Then we define the rewriting trail from
the R\'enyi 
$\gamma_{s}^{-1}$-expansion of 1
\begin{equation}
\label{gammasexpansionSs}
1=1+S_{\gamma_{s}}(\gamma_{s}^{-1})
=
t_{1}\gamma_{s}^{-1}  +t_2 \gamma_{s}^{-2} + \ldots 
+ t_{s-1} \gamma_{s}^{-(s-1)} +t_s \gamma_{s}^{-s}
\end{equation}
(with $t_1 = 1,
t_2 = t_3 = \ldots =
t_{n-1} = 0,
t_n = 1,$ etc) to
\begin{equation}
\label{equa1Pgammas}
- a_1 \gamma_{s}^{-1} - a_2 \gamma_{s}^{-2} + \ldots - 
a_{d-1} \gamma_{s}^{-(d-1)} - \gamma_{s}^{-d}
= 1 - P_{\beta}(\gamma_{s}^{-1}),
\end{equation}
by ``restoring" the digits
of $1 - P_{\beta}(X)$ one after the other,
from the left.

We obtain a sequence
$(A'_{q}(X))_{q \geq 1}$ of
rewriting polynomials involved
in this rewriting trail; 
for $q \geq 1$,
$A'_q \in \zb[X]$,
$\deg(A'_q) \leq q$
and $A'_{q}(0) = 1$.

At the first step we add $0=
-(-a_1 - t_1) \gamma_{s}^{-1} S_{\gamma_{s}}^{*}(\gamma_{s}^{-1})$;
and we obtain 
$$1= -a_1 \gamma_{s}^{-1}$$
$$+(-(-a_1 -t_1) t_1 + t_2) \gamma_{s}^{-2}
+(-(-a_1 -t_1) t_2 + t_3) \gamma_{s}^{-3} + \ldots
$$
so that the height of the polynomial
$$(-(-a_1 -t_1) t_1 + t_2) X^{2}
+(-(-a_1 -t_1) t_2 + t_3) X^{3} + \ldots
$$
is $\leq H+2$.
At the second step we add
$0=
-(-a_2 - (-(-a_1 -t_1) t_1 + t_2)) \gamma_{s}^{-2} 
S_{\gamma_{s}}^{*}(\gamma_{s}^{-1})
$.
Then we obtain
$$1= -a_1 \gamma_{s}^{-1} - a_2 \gamma_{s}^{-2}$$
$$-
[(-a_2 - (-(-a_1 -t_1) t_1 + t_2))t_1
+ (-(-a_1 -t_1) t_2 + t_3)] \gamma_{s}^{-3}+\ldots
$$
where the height of the polynomial
$$-[(-a_2 - (-(-a_1 -t_1) t_1 + t_2))t_1
+ (-(-a_1 -t_1) t_2 + t_3)] X^{3}+\ldots
$$
is $\leq H + (H+2)+(H+2)=3 H+4$.
Iterating this process $d$ times 
we obtain
$$1= -a_1 \gamma_{s}^{-1} - 
a_2 \gamma_{s}^{-2} -\ldots
- a_d \gamma_{s}^{-d}$$
$$+~~
polynomial ~~remainder~~ in~~ \gamma_{s}^{-1}.
$$
Denote by $V(\gamma_{s}^{-1})$
this polynomial remainder in $\gamma_{s}^{-1}$,
for some $V(X) \in \zb[X]$,
and $X$ specializing in $\gamma_{s}^{-1}$.
If we denote the upper bound of the
height of the polynomial remainder
$V(X)$, 
at step $q$, by $\lambda_q H + v_q$, 
we readily
deduce: $v_q = 2^q$, and
$\lambda_{q+1} = 2 \lambda_{q} +1$, $q \geq 1$,
with $\lambda_1 = 1$; then 
$\lambda_{q} = 2^{q}-1$.

To summarize,
the first 
rewriting polynomials of the
sequence
$(A'_{q}(X))_{q \geq 1}$ 
involved in this rewriting trail
are
$$A'_{1}(X) = 
-1 - (-a_1 - t_1) X,$$ 
$$A'_{2}(X) 
= 
-1 - (-a_1 - t_1) X -
(-a_2 - (-(-a_1 -t_1) t_1 + t_2)) X^2 , \quad {\rm etc}.$$

\

For $q \geq \deg(P_{\beta})$, all the coefficients 
of $P_{\beta}$ are ``restored"; denote
by
$(h_{q,j})_{j=0,1,\ldots,s-1}$ the $s$-tuple
of integers produced by this rewriting trail,
at step $q$. It is such that
\begin{equation}
\label{AprimeSPreste}
A'_{q}(\gamma_{s}^{-1}) S_{\gamma_{s}}^{*}(\gamma_{s}^{-1})
=
-P(\gamma_{s}^{-1}) + \gamma_{s}^{-q-1}
\Bigl(
\sum_{j=0}^{s-1} h_{q,j} \gamma_{s}^{-j}
\Bigr).
\end{equation}
Then take $q=d$.
The (lhs) left-and side 
of \eqref{AprimeSPreste} is equal to 
$0$.
Thus 
$$P(\gamma_{s}^{-1}) =
 \gamma_{s}^{-d-1}
\Bigl(
\sum_{j=0}^{s-1} h_{d,j} \gamma_{s}^{-j}
\Bigr)
\qquad
\Longrightarrow
\qquad
P(\gamma_{s}) =
\sum_{j=0}^{s-1} h_{d,j} \gamma_{s}^{-j-1}.
$$
The height
of the polynomial
\begin{equation}
\label{Wpolynomial}
W(X) :=\sum_{j=0}^{s-1} h_{d,j} X^{j+1}
\qquad {\rm is}\qquad \leq
 (2^d -1) H + 2^d,
 \end{equation} 
 and is independent of $s
 \geq W_v$.

\

For any $s \geq W_{\nu},$ 
let us observe that
$- P_{\beta}(\gamma_{s}^{-1})
$ is $> 0$, and that the sequence
$(\gamma_{s}^{-1})_s$ is decreasing.
Indeed, 
the polynomial function
$x \to P_{\beta}(x)$ is positive
on $(0, \beta^{-1})$, vanishes
at $\beta^{-1}$, 
and changes its sign
for
$x > \beta^{-1}$, 
so that 
$P_{\beta}(\gamma_{s}^{-1}) < 0$.
We have: $\lim_{s \to \infty}
P_{\beta}(\gamma_{s}^{-1}) =
P_{\beta}(\beta^{-1})=0$.

Let us use the 
$\gamma_{s}$-transformation and the
greedy (R\'enyi) $\gamma_{s}$-expansion of
$- P_{\beta}(\gamma_{s}^{-1})$: 
there exists
an unique sequence of integers
$(\widehat{t_i})_{i \geq 1} \not\equiv (0)$ 
in the alphabet
$\{0, 1\}$ 
such that
\begin{equation}
\label{boutgammashift}
- P_{\beta}(\gamma_{s}^{-1})
= \frac{\widehat{t_1}}{\gamma_s} +
\frac{\widehat{t_2}}{\gamma_{s}^{2}} +
\frac{\widehat{t_3}}{\gamma_{s}^{3}} +
\ldots .
\end{equation}
The integers $\widehat{t_i}$
are given by the
$\gamma_{s}$-transformation
$T_{\gamma_{s}}: [0,1] \to [0,1],
x \to \{\gamma_{s} x\}$. 
Explicitely, 
the digits 
are

$\widehat{t_1} = \lfloor \gamma_s (- P_{\beta}(\gamma_{s}^{-1})) \rfloor$,

$\widehat{t_2} 
= \lfloor \gamma_s \{\gamma_s (- P_{\beta}(\gamma_{s}^{-1})) \} \rfloor$,

$\widehat{t_3} 
= \lfloor \gamma_s \{\gamma_s \{\gamma_s (- P_{\beta}(\gamma_{s}^{-1})) \} \} \rfloor, \, 
\ldots$ \,, and depend upon $\gamma_{s}$.

\noindent
Since $\lim_{s \to \infty}
P_{\beta}(\gamma_{s}^{-1})
=
P_{\beta}(\beta^{-1}) = 0$
there exists an increasing  sequence 
$(u_s)_{s \geq W_{\nu}}$ of positive integers,
satisfying
$\widehat{t_1}= \widehat{t_2}
=\ldots = \widehat{t_{u_s -1}} = 0$,
$\widehat{t_{u_s}}=1$,
such that 
the
identity 
between $- P_{\beta}(\gamma_{s}^{-1})$ and
its greedy expansion holds, as:
\begin{equation}
\label{boutgammashift_uZERO}
- P_{\beta}(\gamma_{s}^{-1})
= \frac{\widehat{t_{u_s}}}{\gamma_{s}^{u_s}} +
\frac{\widehat{t_{u_s + 1}}}{\gamma_{s}^{u_s + 1}} +
\frac{\widehat{t_{u_s + 2}}}{\gamma_{s}^{u_s + 2}} +
\ldots .
\end{equation}
The sequence $(u_s)$ is defined by the bounds
\begin{equation}
\label{bound_us}
\bigl|
\beta^{u_s} (P_{\beta}(\gamma_{s}^{-1}))
\bigr|
\geq
\bigl|
\gamma_{s}^{u_s} (P_{\beta}(\gamma_{s}^{-1}))
\bigr| \geq 1
\end{equation}
and
$$\bigl|
\gamma_{s}^{u_s} (P_{\beta}(\gamma_{s}^{-1}))
\bigr| \leq 
\frac{1}{1-\gamma_{s}^{-1}}.$$
Therefore, in \eqref{boutgammashift_uZERO},

\begin{equation}
\label{hgammasSeries}
\widehat{t_i} \in \{0,1\}, \quad i \geq 1,
\qquad
{\rm and}
\qquad
\lim_{s \to +\infty} u_s = +\infty .
\end{equation}

Now the lhs of \eqref{boutgammashift_uZERO} belongs to
$\mathbb{Q}(\gamma_{s})$. 
For conjugating \eqref{boutgammashift_uZERO}
by $\sigma_s$, if 
the image by
$\sigma_s$ of the lhs of
\eqref{boutgammashift_uZERO}
belongs to $\mathbb{Q}(r_s)$,
there are three cases for
the conjugation of the rhs of \eqref{boutgammashift_uZERO}:
\begin{enumerate}
\item[(i)] the rhs of \eqref{boutgammashift_uZERO} is finite (ends in infinitely many zeroes),
\item[(ii-1)] 
the rhs of \eqref{boutgammashift_uZERO} is eventually periodic (infinite and
ultimately periodic),
\item[(ii-2)] the rhs of \eqref{boutgammashift_uZERO} is infinite
and not eventually periodic.
\end{enumerate}
\

{\bf Case (i)}:
say  that
$\frac{\widehat{t_{u_s}}}{\gamma_{s}^{u_s}} +
\frac{\widehat{t_{u_s + 1}}}{\gamma_{s}^{u_s + 1}} +
\ldots+
\frac{\widehat{t_{u_s + N}}}{\gamma_{s}^{u_s + N}}$
is the rhs of \eqref{boutgammashift_uZERO}.
Then its image by $\sigma_s$ is
$\widehat{t_{u_s}} \,r_{s}^{u_s} +
\widehat{t_{u_s + 1}} \,r_{s}^{u_s + 1} +
\ldots+
\widehat{t_{u_s + N}} \, r_{s}^{u_s + N}$
and we have the equality
$$P_{\beta}(r_{s})
=
\sigma_{s}\left(- \sum_{j=0}^{s_c-1}
h''_{j} \,\gamma_{s}^{-j-d-2}
\right)=
- \sum_{j=0}^{s_c-1}
h''_{j} \,r_{s}^{j+d+2}
$$
$$=
\sigma_{s} \left( 
\frac{\widehat{t_{u_s}}}{\gamma_{s}^{u_s}} +
\frac{\widehat{t_{u_s + 1}}}{\gamma_{s}^{u_s + 1}} +
\ldots +
\frac{\widehat{t_{u_s + N}}}{\gamma_{s}^{u_s + N}} 
 \right)
=
\widehat{t_{u_s}} r_{s}^{u_s} +
\widehat{t_{u_s + 1}} r_{s}^{u_s + 1} +
\ldots +
\widehat{t_{u_s + N}} r_{s}^{u_s + N} .$$ 
Conjugation by $\sigma_s$ is done 
term by term.

\

{\bf Case (ii-1)}: 
the rhs of \eqref{boutgammashift_uZERO} 
is eventually periodic. Let us write it
$$
=
\frac{\widehat{t_{u_s}}}{\gamma_{s}^{u_s}} +
\frac{\widehat{t_{u_s + 1}}}{\gamma_{s}^{u_s + 1}} +
\ldots+
\frac{\widehat{t_{u_s + N}}}{\gamma_{s}^{u_s + N}}
+
\sum_{i=0}^{\infty}
\left(
\frac{\widehat{t_{u_s + N + 1}}}{\gamma_{s}^{u_s + N + i q + 1}} +
\frac{\widehat{t_{u_s + N + 2}}}{\gamma_{s}^{u_s + N + i q  +  2}} +
\ldots+
\frac{\widehat{t_{u_s + N + q}}}{\gamma_{s}^{u_s + N+ i q +q}}
\right).$$
The period is not equal to zero. The period length is
$q$.
We have: $|r_s|=|\sigma_{s}(\gamma_{s}^{-1})| < 1$. 
Then it is equal to
$$=
\frac{\widehat{t_{u_s}}}{\gamma_{s}^{u_s}} +
\ldots+
\frac{\widehat{t_{u_s + N}}}{\gamma_{s}^{u_s + N}}
+
\sum_{i=0}^{\infty}
\gamma_{s}^{- i q }
\left(
\frac{\widehat{t_{u_s + N + 1}}}{\gamma_{s}^{u_s + N  + 1}} +
\ldots+
\frac{\widehat{t_{u_s + N + q}}}{\gamma_{s}^{u_s + N +q}}
\right)$$
$$=
\frac{\widehat{t_{u_s}}}{\gamma_{s}^{u_s}} +
\ldots+
\frac{\widehat{t_{u_s + N}}}{\gamma_{s}^{u_s + N}}
+
\frac{1}{1-
\gamma_{s}^{- q }}
\left(
\frac{\widehat{t_{u_s + N + 1}}}{\gamma_{s}^{u_s + N  + 1}} +
\ldots+
\frac{\widehat{t_{u_s + N + q}}}{\gamma_{s}^{u_s + N +q}}
\right)$$
and its image by $\sigma_s$ is
$$=
\widehat{t_{u_s}} r_{s}^{u_s} +
\ldots+
\widehat{t_{u_s + N}} r_{s}^{u_s + N}
+
\frac{1}{1-
r_{s}^{q }}
\left(
\widehat{t_{u_s + N + 1}} r_{s}^{u_s + N  + 1} 
+
\ldots+
\widehat{t_{u_s + N + q}} r_{s}^{u_s + N +q}
\right)$$
$$=
\widehat{t_{u_s}} r_{s}^{u_s} +
\ldots+
\widehat{t_{u_s + N}} r_{s}^{u_s + N}
+
\sum_{i=0}^{\infty}
\left(
\widehat{t_{u_s + N + 1}} r_{s}^{u_s + N+ i q + 1} 
+
\ldots+
\widehat{t_{u_s + N + q}} r_{s}^{u_s + N + i q +q}
\right) .$$
The series can be conjugated term by term
by $\sigma_s$;
in this case we have the identity
$$P_{\beta}(r_{s})
=
\sigma_{s} \left( 
\frac{\widehat{t_{u_s}}}{\gamma_{s}^{u_s}} +
\frac{\widehat{t_{u_s + 1}}}{\gamma_{s}^{u_s + 1}} +
\ldots +
 \right)
=
\widehat{t_{u_s}} r_{s}^{u_s} +
\widehat{t_{u_s + 1}} r_{s}^{u_s + 1} +
\ldots  
.$$

\

{\bf Case (ii-2)}: 
if the rhs
of \eqref{boutgammashift_uZERO}
is 
not eventually periodic its conjugation
by $\sigma_s$ cannot be done term by term.
This difficulty
is overcome by enlarging 
the alphabet $\mathcal{A} =\{0,1\}$ to a bigger alphabet
$\mathcal{B}$ and by replacing
the R\'enyi expansion
by a
$(\gamma_{s}, \mathcal{B})$-eventually periodic 
representation of $- P_{\beta}(\gamma_{s}^{-1})$.
This method of enlargement of the alphabet
\cite{dutykhvergergaugry2}
is made possible as
a consequence of the Theorem of Kala-Vavra
recalled in the next subsection. It has to be noted
that Theorem \ref{nonreciprocalpart} implies that
we are in the domain of applicability
of Kala-Vavra's Theorem, because $\gamma_{s}$ 
has no conjugates
on the unit circle. It is the key point.

\subsection{Kala-Vavra Theorem and Galois identification}
\label{S4.3}

Let us recall
the definitions.
The $(\delta, \mathcal{B})$-representations for
a given $\delta \in \mathbb{C}$,
$|\delta| > 1$ and
a given alphabet
$\mathcal{B} \subset \mathbb{C}$ finite,
are expressions of the form
$\sum_{k \geq -L} a_k \delta^{-k}$,
$a_k \in \mathcal{B}$, 
for some integer $L$. We denote
$$
{\rm Per}_{\mathcal{B}}(\delta)
:=
\{x \in \mathbb{C}
:
x \,\,{\rm has \,an \,eventually \,periodic}\,
(\delta, \mathcal{B}){\rm -representation}\}.
$$

\begin{theorem}[Kala - Vavra \cite{kalavavra}, 
\cite{bakermasakovapelantovavavra}]
\label{kalavavra}
Let $\delta \in \mathbb{C}$ be an 
algebraic number of degree $d$, 
$|\delta|>1$, 
and
$a_d x^d - a_{d-1} x^{d-1}
- \ldots - a_1 x -a_0 \in \mathbb{Z}[x]$,
$a_0 a_d \neq 0$, be its minimal
polynomial. Suppose that
$|\delta'| \neq 1$ for any conjugate 
$\delta'$ of $\delta$,
Then there exists a finite alphabet $\mathcal{B} \subset \mathbb{Z}$ such that
$$\mathbb{Q}(\delta) = {\rm Per}_{\mathcal{B}}(\delta).$$
\end{theorem}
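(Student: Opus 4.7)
The plan is to first dispatch the easy inclusion ${\rm Per}_{\mathcal{B}}(\delta) \subseteq \mathbb{Q}(\delta)$ (valid for any finite $\mathcal{B} \subset \mathbb{Z}$), since an eventually periodic $(\delta,\mathcal{B})$-representation sums to an element of $\mathbb{Z}[\delta, \delta^{-1}, (1-\delta^{-q})^{-1}] \subseteq \mathbb{Q}(\delta)$. The substance of the theorem is the reverse inclusion $\mathbb{Q}(\delta) \subseteq {\rm Per}_{\mathcal{B}}(\delta)$ for a suitably chosen $\mathcal{B}$ depending only on $\delta$.

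To handle the hard direction, I would exploit the Galois embedding $\Phi: \mathbb{Q}(\delta) \hookrightarrow \prod_{\sigma} \mathbb{C}$ by all $d$ embeddings $\sigma: \mathbb{Q}(\delta) \to \mathbb{C}$. The hypothesis $|\sigma(\delta)| \neq 1$ for every $\sigma$ splits the coordinates into an expanding set $E = \{\sigma : |\sigma(\delta)| > 1\}$ (containing the identity, since $|\delta|>1$) and a contracting set $C = \{\sigma : |\sigma(\delta)| < 1\}$. Given $x \in \mathbb{Q}(\delta)$, clear denominators so that $a_0^N x \in \mathbb{Z}[\delta, \delta^{-1}]$ for some $N$; it suffices then to produce a $(\delta, \mathcal{B})$-representation of every element of the $\mathbb{Z}$-module $M := \tfrac{1}{N}\mathbb{Z}[\delta, \delta^{-1}]$.

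Next, I would run a greedy digit-extraction algorithm. After an initial shift $x \mapsto \delta^{-L} x$ chosen large enough that $|\sigma(\delta^{-L}x)| \leq R_\sigma$ for every $\sigma \in E$ (a finite condition since $\Phi(x)$ is a fixed vector and each $|\sigma(\delta)|^{-1} < 1$ on $E$), iterate the map $T(y) := \delta \cdot y - a(y)$ where $a(y) \in \mathbb{Z}$ is chosen so that the real part of $\sigma_0(T(y))$ (with $\sigma_0 = \mathrm{id}$) falls inside a fixed bounded fundamental interval, and simultaneously so that every contracting coordinate $\sigma(T(y)) = \sigma(\delta)\sigma(y) - a(y)$ remains in a fixed bounded region (which holds automatically because $|\sigma(\delta)| < 1$ contracts and $a(y)$ is bounded by $\sigma$'s triangle inequality applied to $\delta$ and a bounded $y$). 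Because $a(y)$ depends only on $y$ through quantities bounded uniformly in $\Phi(y)$, the set $\mathcal{B}$ of values taken by $a$ is a finite subset of $\mathbb{Z}$ depending only on $\delta$ and $N$.

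Finally, eventual periodicity follows from a Minkowski-style pigeonhole: the iterates $T^k(x)$ all lie in the intersection of $M$ with a fixed bounded region $\mathcal{R} \subset \prod_\sigma \mathbb{C}$, whose image $\Phi(M) \cap \mathcal{R}$ is a bounded subset of a lattice, hence finite; so two iterates $T^i(x) = T^j(x)$ coincide, which gives the eventual periodicity of the digit sequence. The main obstacle I expect is the simultaneous boundedness requirement in \emph{all} conjugate directions: arranging a fundamental domain for the expanding coordinates (controlled by the digit choice) while ensuring that the contracting coordinates shrink into a preassigned bounded region independent of $x$. This is precisely the step where the hypothesis $|\sigma(\delta)| \neq 1$ is indispensable — a conjugate on the unit circle would be neither absorbed by contraction nor expelled by the digit choice, and the iterates $T^k(x)$ could diverge in that coordinate, breaking the finiteness needed for pigeonhole. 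A secondary subtlety is to show $\mathcal{B}$ can be taken independent of $x$; this follows by choosing $\mathcal{R}$ (and hence the digit alphabet) uniformly in terms of a Galois-invariant norm on $M$, not on the particular $x$ being represented.
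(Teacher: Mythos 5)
The paper does \emph{not} prove Theorem \ref{kalavavra}: it is quoted from \cite{kalavavra} and \cite{bakermasakovapelantovavavra} and then applied to $\delta=\gamma_s$, with no in-paper argument to compare against. So your proposal must be judged on its own merits. The easy inclusion ${\rm Per}_{\mathcal{B}}(\delta)\subseteq\mathbb{Q}(\delta)$ and the Minkowski-embedding framework --- splitting the Galois embeddings into expanding ($E$) and contracting ($C$) sets, and locating where the hypothesis ``no conjugate on the unit circle'' enters --- is the right conceptual setting, and matches the spirit of the literature.

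However, the hard direction as you describe it has a genuine gap. The digit-extraction map $T(y)=\delta y - a(y)$ uses a \emph{single} integer $a(y)$ per step, and an integer acts identically under every embedding: $\sigma(a)=a$. You therefore have exactly one real degree of freedom per iterate. That suffices to steer the principal coordinate $\sigma_0(T^k y)$ into a bounded fundamental interval, and the coordinates in $C$ are bounded automatically by the contraction-plus-bounded-perturbation estimate. But when $|E|\geq 2$ --- which is the generic situation the theorem is designed to cover, e.g.\ $\delta$ a root of $x^2-5x+5$ with two real conjugates in $(1,\infty)$, or any non-Pisot base --- the remaining expanding coordinates $\sigma(T^k y)$, $\sigma\in E\setminus\{\sigma_0\}$, obey $|\sigma(T^{k+1}y)|\geq|\sigma(\delta)|\,|\sigma(T^k y)|-\max|\mathcal{B}|$ and escape exponentially unless they were already and fortuitously small, and no choice of a single digit $a$ can prevent this in all $|E|$ directions at once. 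The iterates $T^k(x)$ are then \emph{not} confined to a bounded region of $\prod_\sigma\mathbb{C}$; the intersection of the Minkowski image of $\tfrac{1}{N}\mathbb{Z}[\delta,\delta^{-1}]$ with a region bounded only in the $\sigma_0$-and-$C$ coordinates is infinite (a lattice cut by a partial cylinder), and the pigeonhole step collapses. Your closing paragraph phrases the needed fix as ``arranging a fundamental domain for the expanding coordinates (controlled by the digit choice)'', but this is precisely what a single integer digit cannot accomplish when $|E|>1$; noting that $|\sigma(\delta)|\neq 1$ does not supply the missing degrees of freedom. In effect your scheme reproduces the classical Pisot-base argument of Schmidt and Bertrand--Mathis, where $E=\{\sigma_0\}$ and the greedy iteration really is confined to a lattice box; extending it to bases with several conjugates of modulus $>1$ is the actual content of Kala--V\'avra and requires a different mechanism (a finiteness statement for representations of zero combined with a rewriting/transducer construction, or a non-constructive existence argument) rather than a straight greedy iteration. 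A secondary, smaller issue: clearing denominators gives $a_0^N x\in\mathbb{Z}[\delta,\delta^{-1}]$ with $N$ depending on $x$, so fixing one module $M=\tfrac{1}{N}\mathbb{Z}[\delta,\delta^{-1}]$ does not yet cover all of $\mathbb{Q}(\delta)$ with a single alphabet; the statement requires $\mathcal{B}$ to be uniform over the whole field, and that uniformity also needs an argument.
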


Let us apply Theorem \ref{kalavavra}
to $\delta = \gamma_{s}$. By Proposition 5
in \cite{dutykhvergergaugry} $\gamma_s$
has no conjugate of modulus 1. 
Therefore there exists a finite alphabet
$\mathcal{B} \subset \mathbb{Z}$ 
such that the lhs of \eqref{boutgammashift_uZERO}
be identified with a
$(\gamma_{s}, \mathcal{B})$- representation
which is eventually periodic, for some
integer $u_s \in \zb$:
\begin{equation}
\label{hgammasSeries_alphabetA}
- P_{\beta}(\gamma_{s}^{-1})
= 
\frac{\widehat{t_{u_s }}}{\gamma_{s}^{u_s }} +
\frac{\widehat{t_{u_s + 1 }}}{\gamma_{s}^{u_s + 1 }} +
\frac{\widehat{t_{u_s + 2 }}}{\gamma_{s}^{u_s + 2 }} +
\ldots ~~.
\end{equation}
Being eventually periodic,
the representation
\eqref{hgammasSeries_alphabetA}
can now be
conjugated term by term by $\sigma_s$, 
since $|\sigma_{s}(\gamma_{s}^{-1})|<1$,
as in case (ii-1).
In \eqref{hgammasSeries_alphabetA} the digits
$\widehat{t_{i }}$ belong to 
a symmetrical alphabet
$\mathcal{B} =\{-m, \ldots, 0, \ldots, m\}$;
the integer $m$ is provided
by the rewriting trail, given by
the rewriting trail 
and
\eqref{Wpolynomial}: we have
$m = \lceil 2((2^d -1) H + 2^d)/3 \rceil$.

\

Indeed, by 
Theorem \ref{kalavavra}
there exist 
a preperiod $R(X) \in \mathcal{B}[X]$, 
a period
$T(X) \in \mathcal{B}[X]$ such that
$$\widehat{W}(\gamma_{s}^{-1})
:=
-P_{\beta}(\gamma_{s}) 
= R(\gamma_{s}^{-1})
+ \gamma_{s}^{-\deg R -1} 
\sum_{j=0}^{\infty} 
\frac{1}{\gamma_{s}^{j (\deg T + 1)}}
T(\gamma_{s}^{-1}),
$$
both polynomials $R$ and $T$ depending upon $s$.
Since the relation
$$S_{\gamma_{s}}(\gamma_{s}^{-1})
=
-1 +t_{1} \gamma_{s}^{-1} 
+t_2 \gamma_{s}^{-2} + \ldots 
+ t_{s-1} \gamma_{s}^{-s+1} 
+t_s \gamma^{-s} = 0$$ 
holds, we may assume
$\deg R \leq s-1$,
$\deg T \leq s-1$.
Then, for $X$ specialized at $\gamma_{s}^{-1}$,
we have the identity
\begin{equation}
\label{Wrepresentation}
\widehat{W}(X) = R(X) +
X^{L}\frac{T(X)}{1-X^r}
\end{equation}
for some positive integers $L, r$.
The height of $(1-X^r) \widehat{W}(X)$ is 
$\leq 2 ((2^d -1) H + 2^d)$
and, with 
$\mathcal{B}$
assumed $=\{-m, \ldots,0,\ldots,+m\}$,
the height of 
$(1-X^r) R(X) + X^L T(X)$ is less than
$3 m$. Therefore $m$ is 
$\leq 2 ((2^d -1) H + 2^d)/3$.
We can take 
$$m = \lceil 2((2^d -1) H + 2^d)/3 \rceil.$$
The alphabet 
$\mathcal{B} = \{-m, \ldots, m\}$
only
depends upon the degree $d$ and the height
$H$ of the polynomial $P_{\beta}$, and
does not depend upon $s$.

\

We now assume
$0 \neq |P_{\beta}(\gamma_s)| \ll 1$.
The
$(\gamma_{s}, \mathcal{B})$-eventually periodic
representation
of $-P_{\beta}(\gamma_{s})$
starts as
$$-P_{\beta}(\gamma_{s})= \widehat{W}(\gamma_{s}^{-1})
=
\frac{\widehat{t_{u_s }}}{\gamma_{s}^{u_s }}
+\frac{\widehat{t_{u_s +1}}}{\gamma_{s}^{u_s +1}}
+\frac{\widehat{t_{u_s +2}}}{\gamma_{s}^{u_s +2}}
+\ldots,\qquad {\rm with}~
|\widehat{t_{j}}| \leq m, j=u_s, u_s +1, \ldots$$
with $\,\widehat{t_{u_s }} \neq 0$.
The exponent $u_s$ appearing
in the first term  
is 
defined in \cite{frougnypelantovasvobodova};
by
Theorem 4, Remarks 5 to 7,
in \cite{frougnypelantovasvobodova},
there exists a positive real
number $\kappa_{\gamma_s , \mathcal{B}} > 0$
such that $u_s$ is the minimal integer 
such that
$$\gamma_{s}^{u_s -1} \geq 
\frac{\kappa_{\gamma,_s  \mathcal{B}}}
{|P(\gamma_s )|} .$$
Since $\lim_{s \to \infty} \gamma_s
= \beta > 1$ and that the alphabet
$\mathcal{B}$ does not depend upon
$s$, from Theorem 4, Remarks 5 to 7,
in \cite{frougnypelantovasvobodova},
we can replace 
$\kappa_{\gamma,_s  \mathcal{B}}$
by a constant $\kappa > 0$,
independent of $s$
(cf also \cite{dutykhvergergaugry2}). 
Thus
$$\lim_{s \to \infty} u_s = +\infty.$$

The 
sequence $(u_{s })$ is here defined by the bounds
\begin{equation}
\label{bound_usA}
\bigl|
\beta^{u_{s} - 1} (P_{\beta}(\gamma_{s}^{-1}))
\bigr|
\geq
\bigl|
\gamma_{s}^{u_s -1} (P_{\beta}(\gamma_{s}^{-1}))
\bigr| \geq \kappa
\end{equation}
and
$$\bigl|
\gamma_{s}^{u_s} (P_{\beta}(\gamma_{s}^{-1}))
\bigr| \leq 
\frac{m}{1-\gamma_{s}^{-1}}.$$
To the collection $(\gamma_s)_{s \geq W_{\nu}}$
is associated the collection
$(\sigma_{s}: \gamma_{s} \to r_s)_{s \geq W_{\nu}}$ 
of $\qb$-automorphisms of $\cb$.
Now, for any $s \geq W_{\nu}$, 
let us conjugate the eventually periodic
representation of
$-P_{\beta}(\gamma_{s}^{-1})$
by $\sigma_{s}$, 
term by term.
For the cases (i) and (ii-1)
we consider
\eqref{boutgammashift_uZERO},
and in the case (ii-2) we consider
\eqref{hgammasSeries_alphabetA}.

\begin{lemma}
\label{majoration}
Denote $c_{lent} = \frac{\pi |z_{1,n}|}{\, a_{\max}}$. 
Then
$$s \geq W_{\nu} \qquad 
\Longrightarrow
\qquad
|r_s| < 1- \frac{c_{lent}}{n}.$$
\end{lemma}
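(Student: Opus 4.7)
The plan is to localise $r_s$ inside the small Rouché disk around $z_{1,n}$ and then compare $|r_s|$ to $1$ by the triangle inequality, using the known asymptotic size of $|z_{1,n}|$. As already observed in the text, the same Rouché argument as in Theorem \ref{_cercleoptiSALEM} applies to the polynomial section $S_s$ (whose tail $|S_s(z)-G_n(z)|$ is still dominated by $|z|^{2n-1}/(1-|z|^{n-1})$ on $C_{1,n}$), so the unique zero $r_s$ of $S_s$ inside $C_{1,n}$ satisfies
$$|r_s - z_{1,n}| \,<\, \frac{c_{lent}}{n} \,=\, \frac{\pi\,|z_{1,n}|}{n\, a_{\max}}.$$
This localisation is valid whenever $s$ is large enough that $S_s$ contains at least the trinomial $G_n$, hence in particular for all $s \geq W_\nu$.

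I would then apply the triangle inequality to obtain
$$|r_s| \,<\, |z_{1,n}| + \frac{c_{lent}}{n} \;=\; |z_{1,n}|\,\Bigl(1 + \frac{\pi}{n\, a_{\max}}\Bigr).$$
The desired inequality $|r_s| < 1 - c_{lent}/n$ thus reduces to the single arithmetic condition
$$|z_{1,n}|\,\Bigl(1 + \frac{2\pi}{n\, a_{\max}}\Bigr) \,<\, 1, \qquad \text{i.e.\ } 1 - |z_{1,n}| \,>\, \frac{2\,c_{lent}}{n} + O(1/n^2).$$

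Finally, I would plug in the asymptotic expansion \eqref{devopomainSALEM} derived from Proposition \ref{zedeUNmodule},
$$|z_{1,n}| \,=\, 1 - \frac{\lo n - \lo \lo n}{n} + \frac{1}{n}\,O\!\Bigl(\frac{\lo \lo n}{\lo n}\Bigr),$$
which turns the arithmetic condition into
$$\lo n - \lo \lo n \,>\, \frac{2\pi}{a_{\max}} + O\!\Bigl(\frac{\lo \lo n}{\lo n}\Bigr).$$
Since $2\pi/a_{\max} = 2\pi/5.8743\ldots \approx 1.070$ while $\lo 32 - \lo \lo 32 \approx 2.22$, and $n \mapsto \lo n - \lo \lo n$ is increasing for $n \geq 3$, this holds with a comfortable margin for every $n \geq 32$. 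The slow decay of the terminant is absorbed by a short numerical check on the initial range, carried out exactly as in the last paragraph of the proof of Theorem \ref{_cercleoptiSALEM}.

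The main (and only) subtlety is the tightness noted above: the triangle-inequality slack and the target slack are both of order $c_{lent}/n = O(1/n)$, so one needs $1 - |z_{1,n}|$ to exceed $2\,c_{lent}/n$, not merely $c_{lent}/n$. Fortunately the dominant decay $(\lo n)/n$ of $1 - |z_{1,n}|$ beats the constant-over-$n$ quantity $2\pi/(n\,a_{\max})$ precisely at the threshold $n \geq 32$ that already governs Subsection \ref{S4.1}. In particular no further restriction on $s$ beyond $s \geq W_\nu$ is required, and the resulting bound on $|r_s|$ is uniform in $s$.
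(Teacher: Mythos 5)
Your proof is correct and supplies exactly the details that the paper's one-sentence justification (``readily comes from the proof of Theorem~\ref{_cercleoptiSALEM}'') leaves implicit. The approach is the same as the paper intends: use the Rouch\'e localisation of $r_s$ in the disk of radius $c_{lent}/n$ about $z_{1,n}$, apply the triangle inequality, and invoke the asymptotic expansion~\eqref{devopomainSALEM} of $|z_{1,n}|$.

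What you add --- and what the paper's proof glosses over --- is the observation that the Rouch\'e condition by itself only gives $|r_s| < |z_{1,n}| + c_{lent}/n$, so the target bound $|r_s| < 1 - c_{lent}/n$ requires the sharper arithmetic fact $1 - |z_{1,n}| > 2\,c_{lent}/n$, i.e.\ $\lo n - \lo\lo n > 2\pi/a_{\max} \approx 1.07$ (up to terminants). This is a genuinely distinct numerical condition from the one the paper checks in~\eqref{amaximalfunctionnnSALEM}, namely $(\lo n - \lo\lo n)/n < \kappa(1,a_{\max})/(1+\kappa(1,a_{\max})) \approx 0.146$; both hold at $n=32$ with room to spare, but the second does not formally entail the first, so your separate verification is needed and is the right move. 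The one point that could be stated a bit more carefully is the treatment of the terminant in~\eqref{devopomainSALEM}: since its implied constant is not made explicit in Proposition~\ref{zedeUNmodule}, the ``short numerical check on the initial range'' you invoke should be understood as checking the exact (computed) values of $|z_{1,n}|$ for $32 \leq n \leq 100$, in the same spirit as --- but not literally identical to --- the numerical check in the proof of Theorem~\ref{_cercleoptiSALEM}, which verified a different inequality. With that understood, the argument is sound, and the remark that $s \geq W_\nu$ is used only to ensure $S_s$ actually contains $G_n$ (so that $\gamma_s$ and $r_s$ are defined) is also a useful clarification that the paper leaves tacit.
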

\begin{proof}
The inequality readily comes from the proof
of Theorem \ref{_cercleoptiSALEM} since the condition of 
Rouch\'e holds true.
\end{proof}

Using Lemma \ref{majoration}
we deduce:

\noindent
{\bf Case (i) and (ii-1):} with the minimal alphabet
$\{-1,0,+1\}$,
\begin{equation}
\label{zerolenticularMAJO}
\bigl|
P_{\beta}(r_{s})
\bigr|
\leq
| r_{s} |^{u_s} \frac{1}{1 - |r_s|}
\leq
\frac{n}{c_{lent}}
(1-\frac{c_{lent}}{n})^{u_s}
,
\end{equation}

\noindent
{\bf Case (ii-2):} with the alphabet
$\mathcal{B} =
\{-m, \ldots, +m\}$,
\begin{equation}
\label{zerolenticularMAJO_A}
\bigl|
P_{\beta}(r_{s})
\bigr|
\leq
| r_{s} |^{u_s} \frac{m}{1 - |r_s|}
\leq
\frac{n \, m}{c_{lent}}
(1-\frac{c_{lent}}{n})^{u_s } .
\end{equation}
In both cases, $\lim_{s \to \infty} u_s =+\infty$.
The rhs of \eqref{zerolenticularMAJO},
resp.
of \eqref{zerolenticularMAJO_A},
tends to 0 if $s$ tends to infinity.
We have: $\lim_{s \to \infty}
P_{\beta}(r_{s}) = 0$.
But $\omega_{1,n} = \lim_{s \to \infty} r_s$
and $z \to
P_{\beta}(z)$ is continuous.
Therefore there exists $s_0 \geq W_{\nu}$ 
such that $s \geq s_0 \Longrightarrow
\bigl|
P_{\beta}(r_{s})
\bigr| < \nu/2$. Contradiction.

\

The only limit possibility
is $P_{\beta}(\omega_{1,n})=0$.
To summarize,

$$ f_{\beta}(\omega_{1,n}) = 0 \qquad \Longrightarrow
\qquad
P_{\beta}(\omega_{1,n}) =0.$$

\end{document}